\def\red#1{\textcolor{black}{#1}}
\def\blue#1{\textcolor{black}{#1}}
\newtheorem{lem}{Lemma}
\newtheorem{thm}{Theorem}
\newtheorem{defn}{Definition}
\newtheorem{prop}{Proposition}
\newtheorem{cor}{Corollary}
\newtheorem{oss}{Remark}
\newcommand{\uvec}{\boldsymbol{u}}
\newcommand{\vvec}{\boldsymbol{v}}
\newcommand{\xivec}{\boldsymbol\xi}
\newcommand{\pvec}{\boldsymbol{p}}
\newcommand{\hvec}{\boldsymbol{h}}
\newcommand{\nvec}{\boldsymbol{n}}
\newcommand{\wvec}{\boldsymbol{w}}
\newcommand{\ptil}{\widetilde{\boldsymbol{p}}}
\newcommand{\qtil}{\widetilde{q}}
\newcommand{\zvec}{\boldsymbol{z}}
\numberwithin{equation}{section}
\begin{document}

\title{Optimal distributed control of two-dimensional \\ nonlocal Cahn--Hilliard--Navier--Stokes systems \\ with
degenerate mobility and singular potential}

\author{
{Sergio Frigeri \thanks{{\color{black}  Dipartimento di Matematica ``N. Tartaglia'', Universit\`{a} Cattolica
del Sacro Cuore (Brescia), Via Musei 41, Brescia I-25121, Italy.
\textit{sergiopietro.frigeri@unicatt.it, sergio.frigeri.sf@gmail.com} }}}
\and
{Maurizio Grasselli\thanks{Dipartimento di Matematica,
Politecnico di Milano, Via E.~Bonardi 9,
Milano I-20133, Italy,
\textit{mau\-ri\-zio.grasselli@polimi.it}}}
\and
{J\"{u}rgen Sprekels\thanks{{\color{black}
Weierstrass Institute for Applied Analysis and Stochastics, Mohrenstrasse 39, D-10117 Berlin,
Germany\red{, and Department of Mathematics, Humboldt-Universit\"at zu Berlin, Unter den
Linden 6, D-10099 Berlin, Germany},
\textit{juergen.sprekels@wias-berlin.de}}}}}

\maketitle

\begin{abstract}\noindent
\red{In this paper,}  we consider a two-dimensional diffuse interface model for the phase separation of an
incompressible and isothermal binary fluid mixture with matched densities. This model consists of
the Navier--Stokes equations, nonlinearly coupled with a convective nonlocal
Cahn--Hilliard equation. The system rules the evolution of the (\blue{volume-}averaged) velocity $\uvec$ of the mixture and
the (relative) concentration difference $\varphi$ of the two phases.
The \red{aim} of this \blue{work} is to study an optimal
control problem for such a system, the control being a time-dependent external force acting on the fluid.
We first prove the existence of an optimal control for a given tracking type cost functional. Then we
study the differentiability \red{properties} of the control-to-state map $\vvec \mapsto [\uvec,\varphi]$, and we establish
first-order necessary optimality conditions. These results generalize the ones obtained by the first
and the third authors jointly with E.~Rocca in \red{\cite{FRS}. There the authors assumed} a
constant mobility and a regular potential with polynomially controlled growth. Here, we analyze the
physically more relevant case of a degenerate mobility and a singular (e.g., logarithmic) potential.
This is made possible by the existence of a unique strong solution \red{which was recently}
 proved by the authors and C.\,G.~Gal \red{in \cite{FGGS}}.
\\
\\
\noindent \textbf{Keywords}: Navier--Stokes equations, nonlocal
Cahn--Hilliard equations, degenerate mobility, incompressible binary
fluids, phase separation, distributed optimal control, first-order necessary optimality conditions.
\\
\\
\textbf{MSC 2010}: 35Q30, 35R09, 49J20, 49J50, 76T99.

\end{abstract}

\section{Introduction}\label{intro}
\setcounter{equation}{0}

A well-known diffuse interface model for incompressible and isothermal
binary fluids is the so-called Cahn--Hilliard--Navier--Stokes system (see, for instance, \cite{AMW,GPV,HMR}).
\red{It consists of} the nonlinear coupling of the Navier--Stokes equations for the \blue{volume-averaged} velocity $\uvec$
with the convective Cahn--Hilliard equation for the (relative) concentration difference $\varphi$
of the two \blue{fluids}. More precisely, assuming matched densities equal to \red{unity}, we have \red{to deal with
the system of partial differential equations}
\begin{align}
\label{sy01}
&\uvec_t-2\,\mbox{div}\left(\nu(\varphi)D\uvec\right)+(\uvec\cdot\nabla)\uvec+\nabla\pi=\mu\nabla\varphi+\vvec, \\
\label{sy02}
& \varphi_t+\uvec\cdot\nabla\varphi=\mbox{div}(m(\varphi)\nabla\mu), \\
\label{sy03}
& \mbox{div}(\uvec)=0,
\end{align}
in $Q:=\Omega\times(0,T)$, where $\Omega\subset\mathbb{R}^d$, $d=2,3$, is a bounded and smooth domain, $T>0$
is a prescribed final time, and $D$ denotes the symmetric gradient defined by $D\uvec:=\big(\nabla \uvec+\nabla^T\uvec
\big)/2$. Here, the viscosity $\nu(\cdot)$ is strictly positive, $\pi$ \blue{stands for} the pressure, $\vvec$
\blue{is} a given external force density,
$m(\cdot)$ is the mobility, and $\mu$ represents the so-called chemical potential.
Within the phenomenological framework devised in \cite{CH}, $\mu$ is the functional derivative of the
\red{local} Ginzburg--Landau type functional
\begin{equation}\label{GL}
{\cal G}(\varphi) = \int_\Omega \left(\frac{\vert \nabla \varphi\vert^2}{2} + W(\varphi)\right)dx,
\end{equation}
where $W$ is
a given double-well potential. Here, and in the following, all \red{of the relevant physical} constants have been set equal to \red{unity}, for the sake of simplicity.

On the other hand, a physically more rigorous approach shows that $\mu$ is the
functional derivative of a {\em nonlocal} functional of the following form (see \cite{BELM,GL0,GL1,GL2}, cf. also \cite{GGG}
for a detailed discussion):
$$
{\cal F}(\varphi) = -\frac{1}{2}\int_\Omega\int_\Omega K(x-y)\varphi(x)\varphi(y)dxdy + \int_\Omega F(\varphi) dx\,.
$$
\red{Here,} $K:\mathbb{R}^d\to \mathbb{R}$ is a sufficiently smooth interaction kernel such that $K(x)=K(-x)$, and $F$
is a convex potential (usually of logarithmic type).

In this contribution, we address \red{an optimal control problem for} the following nonlocal Cahn--Hilliard--Navier--Stokes system:
\begin{align}
&\uvec_t-2\,\mbox{div}\left(\nu(\varphi)D\uvec\right)+(\uvec\cdot\nabla)\uvec+\nabla\pi=\mu\nabla\varphi+\vvec,\label{sy3}\\
& \varphi_t+\uvec\cdot\nabla\varphi=\mbox{div}(m(\varphi)\nabla\mu),\label{sy1}\\
& \mu=-K\ast\varphi+F^\prime(\varphi),\label{sy2}\\
& \mbox{div}(\uvec)=0, \label{sy4}
\end{align}
in \red{$Q$}, subject to the boundary conditions
\begin{align}
&\uvec=\mathbf{0},\qquad m(\varphi)\nabla\mu\cdot\nvec=0,\label{sy5}
\end{align}
on $\Sigma:=\partial\Omega\times (0,T)$, and to the initial conditions
\begin{align}
& \uvec(0)=\uvec_0,\qquad\varphi(0)=\varphi_0,
\label{sy6}
\end{align}
in $\Omega$. Here, $\nvec$ stands for the outward \red{unit} normal to the boundary $\partial\Omega$ of $\Omega$,
while $\uvec_0$ and $\varphi_0$ are given \blue{functions}.

\red{Problem \eqref{sy3}--\eqref{sy6} constitutes the \emph{state system} of the
control problem to be investigated below. A slightly different version thereof}  has firstly been analyzed in \cite{FGR} under rather general assumptions
on $\nu$, $m$, $J$ and $F$ (see also \cite{CFG,FG1,FG2,FGK} for more restrictive assumptions). More precisely, the mobility degenerates at the pure phases $\varphi=\pm 1$, and $F$ is a bounded (smooth) potential, defined on $(-1,1)$, whose derivatives are unbounded (i.e., a so-called singular potential). In particular, $m$ and $F$ can have the following form:
\begin{equation}
\label{physmF}
m(s)=1-s^2,\qquad F(s)= (1+s)\ln (1+s) + (1-s)\ln (1-s), \qquad s\in (-1,1).
\end{equation}
In \cite{FGR}, the existence of a global weak solution \red{was} established for a constant viscosity,
but the same argument can easily be extended
to nonconstant \red{viscosities} as well. \red{Uniqueness and existence of a strong solution
are more delicate issues, and restricted to the two-dimensional case}. The former was analyzed in \cite{FGG}, proving a conditional weak-strong uniqueness, i.e., by supposing that a strong solution exists. The existence of a strong solution has been much harder to prove. This was done in the more recent contribution \cite{FGGS} by using
a time-discretization scheme combined with a suitable approximation of $m$ and $F$.

In this paper, \red{we aim to study optimal control problems for the state system \eqref{sy3}--\eqref{sy6}, which,
in order to have a well-defined control-to-state operator, postulates the unique solvability of the state system \blue{itself}. Also,
the investigation of the differentiability properties of the control-to-state operator requires that
the solution to the state system be sufficiently regular. Both requirements make it necessary to restrict
the analysis to the spatially two-dimensional case. For this case,
we can exploit} the existence of a unique strong solution in order to formulate an optimal distributed control problem which
is similar to the one analyzed in \cite{FRS} under the more restrictive assumptions that  $m$ is constant and $F:\mathbb{R}\to \mathbb{R}$ is smooth with a
polynomially controlled growth. This is not just a minor generalization,
 since it requires a \red{considerable} technical effort, and, \blue{besides}, accounts
for choices of $m$ and $F$ which are physically more relevant.
A similar problem  was originally considered in \cite{RS} \red{in the spatially
three-dimensional case} for a convective nonlocal
Cahn--Hilliard equation with degenerate mobility and singular potential\red{, where the control was given by the velocity itself.}
However, the assumptions \red{in \cite{RS}} were more restrictive than the present ones. Indeed, the authors only considered solutions which are uniformly separated from the pure phases. Here, we do not use this property, so the initial datum can even \red{represent} a pure phase. This is possible because the system is conveniently reformulated in a more general form, following the approach devised in \cite{EG} for the Cahn--Hilliard equation with degenerate mobility, singular potential and the standard chemical potential \eqref{GL}. It is worth observing that for such an equation, and also for the corresponding system \eqref{sy01}--\eqref{sy03}, only the existence of a global weak solution has been proven \blue{so far} (cf. \cite{B}).

Let us now introduce the control problem we are interested in (see \cite{FRS}).

\vspace{5mm}\noindent
\textbf{(CP)} \quad Minimize the tracking type cost functional
\begin{align}
\mathcal{J}(y,\vvec)&:=\frac{\beta_1}{2}\Vert\uvec-\uvec_Q\Vert_{L^2(Q)^2}^2+\frac{\beta_2}{2}\Vert\varphi-\varphi_Q\Vert_{L^2(Q)}^2
+\frac{\beta_3}{2}\Vert\uvec(T)-\uvec_\Omega\Vert_{L^2(\Omega)^2}^2\nonumber\\[1mm]
&\hspace*{5mm}+\frac{\beta_4}{2}\Vert\varphi(T)-\varphi_\Omega\Vert_{L^2(\Omega)}^2+
\frac{\gamma}{2}\Vert\vvec\Vert_{L^2(Q)^2}^2\,,\label{costfunct}
\end{align}
where $y:=[\uvec,\varphi]$ solves the state system \red{\eqref{sy3}}--\eqref{sy6}.

\vspace{5mm}
\noindent
Here, the quantities $\uvec_Q\in L^2(0,T;G_{div})$, $\varphi_Q\in L^2(Q)$, $\uvec_\Omega\in G_{div}$, and $\varphi_\Omega\in L^2(\Omega)$,
are given target functions, while $\beta_i$, $i=1,\dots, 4$, and $\gamma$ are some fixed nonnegative constants that do not vanish simultaneously. \red{Moreover,}
$G_{div}$ is the classical Navier--Stokes type space (see, e.g., \cite{T}), that is,
\begin{equation*}
G_{div}:=\overline{\big\{\uvec\in C^\infty_0(\Omega)^2:\mbox{div}(\uvec)=0\big\}}^{L^2(\Omega)^d}.
\end{equation*}
\blue{The} control $\vvec$ is supposed to belong to a convenient closed, bounded and convex subset (see below) of the space of controls
$L^2(0,T;G_{div})$.

We remind that optimal control problems for the Cahn--Hilliard--Navier--Stokes system with \eqref{GL} have
recently been studied \red{for the spatially three-dimensional case, where, however, the time-discretized
version case was considered (see \cite{HHK,HHKW,HW3,HW,HW1,HW2}). We also refer to the recent
contributions \cite{CGS1,CGS2} for a treatment of the control by the velocity of convective Cahn--Hilliard systems
with dynamic boundary conditions  in three dimensions}.

The plan of this paper \red{is} as follows. In Section~\ref{singpotdm}, we introduce notation, the basic assumptions, and the notion
of weak \blue{solutions to the state system. Then
we report the existence theorem mentioned above}. \blue{Also, we  state the existence and uniqueness
result on strong solutions to the state system for the case $d=2$, which is fundamental for the
control problem,} and the related hypotheses.
Section~\ref{stability} is devoted to establish some global stability estimates that are crucial to analyze
the control problem \textbf{(CP)}. This
is studied in Section~\ref{optimal control}: first, we prove in a standard way the existence of an optimal control; then we show the Fr\'echet differentiability of the control-to-state map \red{in suitable Banach spaces (where} the stability estimates plays an essential role). Finally, we establish first-order optimality conditions.

\red{Throughout the entire paper, we will repeatedly use Young's inequality
\begin{equation}\label{Young}
a\,b\,\le\,\delta a^2+ \frac 1 {4\delta}b^2 \quad \mbox{ for all }  a,b\in\mathbb{R} \mbox{ and } \delta>0\,,
\end{equation}
and we employ the following notational convention for the use of constants
in estimates: the letter \,$C$\, denotes a generic positive constant depending only on the data of the respective
problem; the use of subscripts like in $\,C_{m,K}$\, signals that the constant depends in a bounded way on the quantities
occurring in the subscript (in this case, $m\,$ and $\,K$), in particular. In any case, the meaning will be clear and no confusion will arise. }

\section{Notation and known results \red{for the state system}}
\label{singpotdm}
\setcounter{equation}{0}

We set \,$H:=L^2(\Omega)$, $V:=H^1(\Omega)$, and denote by \,$\|\cdot\|$ and \,$
(\cdot,\cdot)$ the norm and the inner product, respectively, in both $H$
and $G_{div}$, as well as in $L^2(\Omega)^2$ and $L^2(\Omega)^{2\times 2}$.
The notation $\langle\cdot,\cdot\rangle_X$ and $\Vert\cdot\Vert_X$ will stand for
the duality pairing between a (real) Banach space $X$ and its dual $X^{\prime}$, and for the norm of $X$, respectively.
The space
\begin{equation*}
V_{div}:=\overline{\big\{\uvec\in C^\infty_0(\Omega)^2:\mbox{div}(\uvec)=0\big\}}^{H^1(\Omega)^d}
\end{equation*}
is endowed with the scalar product
\begin{equation*}
(\uvec,\vvec)_{V_{div}}=(\nabla \uvec,\nabla \vvec)=2(D\uvec,D\vvec),\qquad\forall\, \uvec,\vvec\in V_{div}.
\end{equation*}
Let us also recall the definition of the Stokes operator  $S:D(S)\cap
G_{div}\to G_{div}$ in the case of \red{the} no-slip boundary
condition \eqref{sy5}$_1$, i.e., $S=-P\Delta$ with domain $D(S)=H^2(\Omega)^d\cap V_{div}$,
where $P:L^2(\Omega)^d\to G_{div}$ is the Leray projector (see \cite{T}). Notice that we have
$$(S\uvec,\vvec)=(\uvec,\vvec)_{V_{div}}=(\nabla \uvec,\nabla \vvec),\qquad\forall\, \uvec\in D(S),\quad\forall\, \vvec\in V_{div}.$$
We also recall that $S^{-1}:G_{div}\to G_{div}$ is a self-adjoint \red{and} compact operator in $G_{div}$,
and the spectral theorem entails the existence of a sequence of eigenvalues $\lambda_j$ with $0<\lambda_1\leq\lambda_2\leq\cdots$ and $\lambda_j\to\infty$, and a corresponding family of eigenfunctions $\wvec_j\in D(S)$, which is orthonormal in $G_{div}$ and \red{satisfies} $S\wvec_j=\lambda_j\wvec_j$ \red{for all $j\in\mathbb{N}$}.
We also recall Poincar\'{e}'s inequality
\begin{equation*}
\lambda_1\,\Vert\uvec\Vert^2\leq\Vert\nabla\uvec\Vert^2,\qquad\forall\,\uvec\in V_{div},
\end{equation*}
and two other inequalities, which are valid in two dimensions of space and will be used repeatedly  in the course of our analysis\red{, namely,} the following special case of the Gagliardo--Nirenberg inequality (see, e.g., \cite{BIN}),
\begin{equation}
\label{GN}
\Vert v\Vert_{L^{2q}(\Omega)}\,\leq\,\widehat C_2\,\Vert v\Vert^{1/q}\,\Vert v\Vert_V^{1-1/q},\qquad\forall\, v\in V,
\qquad 2\leq q<\infty,
\end{equation}
and  Agmon's inequality (see \cite{AG})
\begin{equation}
\Vert v\Vert_{L^\infty(\Omega)}\,\leq\,\widehat C_3\,\Vert v\Vert^{1/2}\,
\Vert v\Vert_{H^2(\Omega)}^{1/2},\qquad\forall\, v\in H^2(\Omega).
\label{Agmon}
\end{equation}
In these inequalities, the positive constant $\widehat C_2$ depends on $q$ and on $\,\Omega \subset\mathbb{R}^2$,
 while the positive constant $\widehat C_3$ depends only on $\,\Omega$.

The trilinear form $\,b\,$ appearing in the weak formulation of the
Navier--Stokes equations is defined as usual, namely,
\begin{equation*}
b(\uvec,\vvec,\wvec):=\int_{\Omega}(\uvec\cdot\nabla)\vvec\cdot \wvec \,dx\qquad\forall\, \uvec,\vvec,\wvec\in V_{div}\,.
\end{equation*}
The associated bilinear operator $\mathcal{B}$ from $V_{div}\times
V_{div}$ into $V_{div}^{\prime}$ is defined by $\langle\mathcal{B}%
(\uvec,\vvec),\wvec\rangle:=b(\uvec,\vvec,\wvec)$, for all $\uvec,\vvec,\wvec\in V_{div}$.
We set $\mathcal{B}\,\uvec:=\mathcal{B}(\uvec,\uvec)$, for every $\uvec\in V_{div}$.
We recall the well-known identity
$$b(\uvec,\wvec,\vvec)\,=\,-\,b(\uvec,\vvec,\wvec) \qquad\forall\,\uvec,\vvec,\wvec\in
V_{div},$$
and the two-dimensional inequality
\begin{align*}
&|b(\uvec,\vvec,\wvec)|\,\leq\, \widehat C_1\,\|\uvec\|^{1/2}\,\|\nabla \uvec\|^{1/2}\,
\|\nabla \vvec\|\,\|\wvec\|^{1/2}\,\|\nabla \wvec\|^{1/2}\qquad\forall\, \uvec,\vvec,\wvec\in V_{div},
\end{align*}
with a constant $\widehat C_1>0$ that depends only on $\Omega$.

We now state the assumptions which ensure the existence of a global weak solution. \red{Although weak
solutions do not play a role for our control problem, we have decided to include the corresponding
existence result for the sake of giving the reader a complete picture
of the well-posedness results known for the state system. In particular, we include the result for the
case of three dimensions of space, noting that this result is too weak for control purposes. We
make the following assumptions:}

\begin{description}
\item[(V)] The viscosity $\nu $ is Lipschitz continuous on $[-1,1]$, and there exists some $\nu _{1}>0$ such that
\begin{equation*}
\nu _{1}\leq \nu (s)\,,\quad {\color{black}\forall s\in [-1,1]\,.}
\end{equation*}
\item[(K)]
$K(\cdot-x)\in W^{1,1}(\Omega)$ for almost every $x\in\Omega$, \red{and it holds that}\,  $K(x)=K(-x)$ and
\begin{align}
&
\sup_{x\in\Omega}\int_\Omega|K(x-y)|dy<\infty\,, \qquad
\sup_{x\in\Omega}\int_\Omega|\nabla K(x-y)|dy<\infty\,.\nonumber
\end{align}
\item[(H1)] The mobility satisfies
$m\in C^1([-1,1])$, $m\geq 0$, and
$m(s)=0$ if and only if $s=-1$ or $s=1$. Moreover, there exists some $\epsilon_0>0$
such that $m$ is nonincreasing in $[1-\epsilon_0,1]$ and nondecreasing in $%
[-1,-1+\epsilon_0]$.
\item[(H2)]  $F\in C^2(-1,1)$ \,and\, $\lambda:=mF^{\prime\,\prime}\in C\left([-1,1]\right)$.
\item[(H3)] There exists some $\epsilon_0>0$
such that $F^{\prime\prime}$ is nonincreasing in $[1-\epsilon_0,1]$ and nondecreasing in $%
[-1,-1+\epsilon_0]$.
\item[(H4)]  There exists some $c_{0}>0$ such that
\begin{equation*}
F^{\,\prime \prime }(s)\geq c_{0}\,,\qquad \forall\, s\in (-1,1)\,\,.
\end{equation*}
\item[(H5)] There exists some $\alpha_{0}>0$ such that
\begin{equation*}
\lambda(s)\geq \alpha _{0}\,,\qquad \forall\, s\in \lbrack
-1,1]\,.
\end{equation*}
\end{description}
\noindent
We also recall that if the mobility degenerates, then the notion of weak solution
must be formulated in a suitable way (cf. \cite{EG}, see also  \cite{FGR}).
\begin{defn}\label{wfdef}
Let $\uvec_0\in G_{div}$ \red{and} $\varphi_0\in \blue{L^\infty(\Omega)}$ with $F(\varphi_0)\in L^1(\Omega)$ and $%
\vvec\in L^2(0,T;V_{div}^{\prime})$ \red{be given}. A couple $%
[\uvec,\varphi]$ is \red{called} a weak solution to \eqref{sy3}--\eqref{sy6} on $[0,T]$ \red{if
and only if the following conditions hold true:}
\begin{itemize}
\item $\uvec$ and $\varphi$ satisfy
\begin{align*}
&\uvec\in L^{\infty}(0,T;G_{div})\cap L^2(0,T;V_{div}), \\
&\uvec_t\in L^{4/3}(0,T;V_{div}^{\prime})\qquad\mbox{if}\quad d=3, \\
&\uvec_t\in L^2(0,T;V_{div}^{\prime})\qquad\mbox{if}\quad d=2, \\
&\varphi\in L^{\infty}(0,T;H)\cap L^2(0,T;V)  \red{ \cap L^\infty(Q) }, \\
&\varphi_t\in L^2(0,T;V^{\prime}),\\
&|\varphi(x,t)|\leq 1\quad\mbox{ for a.e. }%
(x,t)\in Q;
\end{align*}
\item for every $\wvec\in V_{div}$, every $\psi\in V$, and \red{almost every} $%
t\in(0,T)$, we have
\begin{align*}
&\red{\langle \uvec_t,\wvec\rangle_{V_{div}}}+2\,\left(\nu\left(\varphi\right)D\uvec,D\wvec\right)+b(\uvec,\uvec,\wvec)\,=\,
-\big((K\ast\varphi)\nabla\varphi,\wvec\big)+\red{\langle \vvec,\wvec\rangle_{V_{div}}},\\[1mm]
&\red{\langle\varphi_t,\psi\rangle_V}+\int_\Omega
m(\varphi)F^{\prime\prime}(\varphi)\nabla\varphi\cdot\nabla\psi\,dx
-\int_\Omega m(\varphi)(\nabla
K\ast\varphi)\cdot\nabla\psi\,dx\,=\,(\uvec\varphi,\nabla\psi);
\end{align*}
\item the initial conditions $\uvec(0)=\uvec_0$ and $\varphi(0)=\varphi_0$ are fulfilled.
\end{itemize}
\end{defn}

\noindent \red{We observe that the} regularity properties of the weak solution imply the weak continuity
$\uvec\in C_w([0,T];G_{div})$ and $\varphi\in C_w([0,T];H)$. Therefore, the
initial conditions are meaningful.

We now report the result shown in \cite{FGR}. In this connection, we point out that there the viscosity $\nu$ was
assumed to be constant just to avoid technicalities; however, the assertion of the theorem
still holds true if $\nu$ satisfies only \textbf{(V)} (see also \cite{FGG} for further details).

\begin{thm}
\label{uniqthmdeg}
Assume that  \textbf{(V)}, \textbf{(K)} and \textbf{(H1)}--\textbf{(H5)} are satisfied. Let
$\uvec_0\in G_{div}$ and $\varphi_0\in L^\infty(\Omega)$ with $F(\varphi_0)\in L^1(\Omega)$ and $M(\varphi_0)\in
L^1(\Omega)$ \red{be given}, where \,$M\in C^2(-1,1)$\, solves \,$m(s)M^{\prime\prime}(s)=1$
\,for all\, $s\in(-1,1)$\, with \,$M(0)=M^{\prime}(0)=0$. \red{Assume also that}
$\vvec\in L^2_{loc}([0,\infty);V_{div}^{\prime})$.
Then, for every $T>0$, \red{the state} system \eqref{sy3}--\eqref{sy6} admits a weak solution $[\uvec,\varphi]$
on $[0,T]$ such that \red{the mean values satisfy}\, $\overline{\varphi}(t)=\overline{\varphi}_0$ for all $t\in[0,T]$.
\blue{If
$d=2$, then} the weak solution $[\uvec,\varphi]$ satisfies the energy equation
\begin{align}
&
\frac{1}{2}\frac{d}{dt}
\big(\Vert \uvec\Vert^2+\Vert\varphi\Vert^2\big)+\int_\Omega m(\varphi)F''(\varphi)|\nabla\varphi|^2\,\red{dx}
\,+\,2\Vert \sqrt{\nu(\varphi)} D\uvec\Vert^2\nonumber\\
&=\int_\Omega m(\varphi)\red{(\nabla K\ast\varphi)} \cdot\nabla\varphi\red{\,dx}\,
 - \int_\Omega \red{(K\ast\varphi)}\,
\uvec\cdot\nabla\varphi\,\red{dx\,}+\red{\langle \vvec,\uvec\rangle_{V_{div}}\,\quad\mbox{for a.e. }\,t\in (0,T).}
\label{energeq}
\end{align}
\blue{If $d=3$}, then $[\uvec,\varphi]$ satisfies the energy inequality
\begin{align}
&\frac{1}{2} \left(\Vert \uvec(t)\Vert^2+\Vert\varphi(t)\Vert^2\right)
+ 2 \int_0^t\Vert \sqrt{\nu(\varphi)} D\uvec\Vert^2\red{(s)\,ds}\,
 + \int_0^t\int_\Omega m(\varphi) F''(\varphi)|\nabla\varphi|^2\red{\,dx\,ds}\nonumber\\
&\leq\frac{1}{2}\big(\Vert \uvec_0\Vert^2+\Vert\varphi_0\Vert^2\big)
+\int_0^t\int_\Omega m(\varphi)\red{( \nabla K\ast\varphi)} \cdot\nabla\varphi\,\red{dx\,ds}
\nonumber\\
&\quad - \int_0^t\int_\Omega \red{(K\ast\varphi)} \uvec\cdot\nabla\varphi\,\red{dx\,ds}\,+
\int_0^t\red{\langle \vvec(s),\uvec(s)\rangle_{V_{div}}(s)\,ds}\,,
\qquad\forall \,\red{t\in (0,T]}.
\label{energineq}
\end{align}
\end{thm}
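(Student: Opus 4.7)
My plan is to follow the approximation strategy of \cite{EG,FGR} combined with a Faedo--Galerkin scheme for the Navier--Stokes part, exploiting the key reformulation
\[
m(\varphi)\nabla\mu\,=\,m(\varphi)F^{\prime\prime}(\varphi)\nabla\varphi-m(\varphi)(\nabla K\ast\varphi)\,=\,\lambda(\varphi)\nabla\varphi-m(\varphi)(\nabla K\ast\varphi),
\]
which, thanks to hypothesis \textbf{(H2)}, is continuous up to the degeneracy points $\varphi=\pm 1$. Concretely, I would first regularize by extending $m$ to a globally defined $m_\epsilon\in C^1(\mathbb{R})$ with $m_\epsilon\ge c(\epsilon)>0$ coinciding with $m$ on $[-1+\epsilon,1-\epsilon]$, and by replacing $F$ by a convex $F_\epsilon\in C^2(\mathbb{R})$ with polynomial growth satisfying $F_\epsilon\to F$ monotonically on $(-1,1)$ and $F_\epsilon^{\prime\prime}\ge c_0$; the associated $\lambda_\epsilon:=m_\epsilon F_\epsilon^{\prime\prime}$ should converge uniformly to $\lambda$ on compacts and remain bounded from below by $\alpha_0$ on $[-1,1]$ in view of \textbf{(H5)}. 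For the regularized system, existence of a Galerkin approximation $[\uvec_\epsilon^n,\varphi_\epsilon^n]$ using the Stokes eigenfunctions $\{\wvec_j\}$ for the velocity is standard since both coefficients are now non-degenerate and smooth.

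The a priori estimates come in two layers. The first is the basic energy estimate obtained by testing the momentum equation with $\uvec_\epsilon^n$ and the Cahn--Hilliard equation with $\mu_\epsilon^n=-K\ast\varphi_\epsilon^n+F_\epsilon^{\prime}(\varphi_\epsilon^n)$: the coupling terms $(\mu_\epsilon^n\nabla\varphi_\epsilon^n,\uvec_\epsilon^n)$ and $(\uvec_\epsilon^n\cdot\nabla\varphi_\epsilon^n,\mu_\epsilon^n)$ cancel (using $\mathrm{div}\,\uvec_\epsilon^n=0$), giving, after applying \textbf{(V)}, \textbf{(K)} and Young's inequality \eqref{Young},
\[
\sup_{t\in[0,T]}\Bigl(\|\uvec_\epsilon^n\|^2+\|\varphi_\epsilon^n\|^2+\|F_\epsilon(\varphi_\epsilon^n)\|_{L^1}\Bigr)+\int_0^T\bigl(\|\nabla\uvec_\epsilon^n\|^2+\alpha_0\|\nabla\varphi_\epsilon^n\|^2\bigr)\,ds\,\le\,C.
\]
The second, crucial, layer is the entropy estimate: testing the Cahn--Hilliard equation with $M_\epsilon^{\prime}(\varphi_\epsilon^n)$, where $m_\epsilon M_\epsilon^{\prime\prime}=1$, produces $\int_\Omega M_\epsilon(\varphi_\epsilon^n)\,dx$ in the energy functional and, more importantly, controls $\|\nabla F_\epsilon^{\prime}(\varphi_\epsilon^n)\|_{L^2}$ up to lower order terms; since $M$ blows up at $\pm 1$, this is exactly what will later enforce $|\varphi|\le 1$ a.e.

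The passage to the limit $n\to\infty$, then $\epsilon\to 0$, is the main technical obstacle. Standard Aubin--Lions arguments yield $\varphi_\epsilon\to\varphi$ strongly in $L^2(0,T;H)$ and $\uvec_\epsilon\to\uvec$ strongly in $L^2(0,T;G_{div})$, and the entropy bound on $\|M_\epsilon(\varphi_\epsilon)\|_{L^\infty_tL^1_x}$ combined with Fatou's lemma yields $|\varphi|\le 1$ a.e. in $Q$. The delicate point is identifying the limit of the diffusion term: I would rewrite it as
\[
\int_\Omega\lambda_\epsilon(\varphi_\epsilon)\nabla\varphi_\epsilon\cdot\nabla\psi\,dx-\int_\Omega m_\epsilon(\varphi_\epsilon)(\nabla K\ast\varphi_\epsilon)\cdot\nabla\psi\,dx,
\]
and use the strong convergence of $\varphi_\epsilon$ together with the uniform continuity of $\lambda$ on $[-1,1]$ (given by \textbf{(H2)}) and of $m$ to pass to the limit, exactly in the spirit of \cite{EG}. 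The convective term $(\uvec_\epsilon\varphi_\epsilon,\nabla\psi)$ is handled by the strong convergences. For the Navier--Stokes trilinear term $b(\uvec_\epsilon,\uvec_\epsilon,\wvec)$ I would invoke the two-dimensional (or three-dimensional) Ladyzhenskaya-type estimate together with the strong convergence of $\uvec_\epsilon$, and for $\nu(\varphi_\epsilon)D\uvec_\epsilon$ the Lipschitz continuity from \textbf{(V)} combined with a.e.\ convergence of $\varphi_\epsilon$.

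Finally, the mean-value identity $\overline{\varphi}(t)=\overline{\varphi}_0$ follows by choosing $\psi\equiv 1$ in the weak formulation (both diffusion and transport contributions vanish owing to the Neumann condition in \eqref{sy5} and $\mathrm{div}\,\uvec=0$). For $d=2$, the improved regularity $\uvec_t\in L^2(0,T;V_{div}^{\prime})$ legitimates testing the momentum equation with $\uvec$ itself, yielding the energy equation \eqref{energeq} after noting that $(\mu\nabla\varphi,\uvec)=-((K\ast\varphi)\uvec,\nabla\varphi)$ because $F^{\prime}(\varphi)\nabla\varphi$ is a gradient and $\uvec$ is solenoidal. For $d=3$, only weak convergence of $\uvec_\epsilon^{\prime}$ is available, so passing to the limit in the approximate energy identity gives, via lower semicontinuity of $\|\cdot\|$ and $\|\sqrt{\nu(\cdot)}D\,\cdot\|$, only the energy inequality \eqref{energineq}. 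The principal difficulty throughout is maintaining the constraint $|\varphi|\le 1$ while handling the simultaneous singularity of $F^{\prime}$ and degeneracy of $m$, which is precisely why the $\lambda$-reformulation is essential.
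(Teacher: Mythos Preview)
The paper does not actually prove this theorem: it is quoted verbatim from \cite{FGR} (with the minor extension to nonconstant viscosity noted in the surrounding text), and no argument is given beyond the citation. Your proposal follows precisely the strategy of \cite{FGR}, namely the Elliott--Garcke regularization of $m$ and $F$ combined with a Faedo--Galerkin scheme, the energy/entropy pair of a priori estimates, and Aubin--Lions compactness, so in substance you are reconstructing the cited proof rather than offering an alternative.

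Two small points of imprecision are worth flagging. First, the entropy test with $M_\epsilon'(\varphi_\epsilon^n)$ yields control of $\int F_\epsilon''(\varphi_\epsilon^n)|\nabla\varphi_\epsilon^n|^2$, not of $\|\nabla F_\epsilon'(\varphi_\epsilon^n)\|^2$ as you write; these differ by a factor of $F_\epsilon''$ and only the former is what the argument actually delivers and needs. Second, at the Galerkin level $\varphi_\epsilon^n$ is not yet confined to $[-1,1]$, so the lower bound $\lambda_\epsilon\ge\alpha_0$ must be arranged to hold on all of $\mathbb{R}$ (via the extension of $m_\epsilon$ and $F_\epsilon''$), not merely on $[-1,1]$; otherwise the coercivity in the energy estimate is not guaranteed before the limit $\epsilon\to 0$ restores the physical range. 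Neither issue is a genuine gap, but both should be stated correctly in a full write-up.
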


\red{As noted above, the notion of weak solution does not suffice for purposes of optimal control theory.}
In order to introduce the notion of strong solution, we need \blue{the} slightly stronger assumption:
\begin{description}
\item[(H2*)] $F\in C^{3}(-1,1)$, and \,$\lambda :=mF^{\prime\prime }\in C^{1}([-1,1])$.
\end{description}
We then set (see \cite{FGGS})
\begin{equation}
B(s):=\int_{0}^{s}\lambda (\sigma )d\sigma \,,\qquad \forall s\in \lbrack -1,1]\,.
\label{defB}
\end{equation}

\noindent Moreover, we recall the definition \red{of the notion of admissible kernels} (see \cite[Definition 1]{BRB}):

\begin{defn}
A kernel $K\in W_{loc}^{1,1}(\mathbb{R}^{d})$ is called admissible if and only if the following conditions are satisfied:
\begin{description}
\item[(K1)] $K\in C^{3}(\mathbb{R}^{d}\backslash \{0\})$;
\item[(K2)] $K$ is radially symmetric, $K(x)=\tilde{K}(|x|)$, and \,$\tilde{K}$
is nonincreasing;
\item[(K3)] $\tilde{K}^{\prime \prime }(r)$ and $\tilde{K}^{\prime }(r)/r$
are monotone on $(0,r_{0})$\, for some \,$r_{0}>0$;
\item[(K4)] $|D^{3}K(x)|\leq C_{d}|x|^{-d-1}$ \,for some\, $C_{\ast }>0$.
\end{description}
\end{defn}

For the readers' convenience, we report the following lemma.

\begin{lem}
\label{admiss} {\rm (cf. \cite[Lemma 2]{BRB})}\,\, Let $K$ be admissible. Then, for
every $p\in (1,\infty )$, there exists some $C_{p}>0$ such that
\begin{equation}
\label{yeah}
\red{\Vert \nabla(\nabla K\ast \psi)\Vert} _{L^{p}(\Omega )^{d\times d}}\leq C_{p}\Vert \psi \Vert
_{L^{p}(\Omega )}\,\qquad \forall\,\psi \in L^{p}(\Omega )\,,
\end{equation}
\red{where}\,  $C_{p}=C_{\ast }p$ for $p\in \lbrack
2,\infty )$ and $C_{p}=C_{\ast }p/\left( p-1\right) $ for $p\in \left(
1,2\right) $, with some constant $C_{\ast }>0$ which is independent of $p.$
\end{lem}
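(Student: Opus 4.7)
The plan is to recognize the mapping $T\psi := \nabla(\nabla K \ast \psi)$ as a Calder\'{o}n--Zygmund convolution operator whose kernel is the matrix $D^{2}K$ of second derivatives of $K$, and then to invoke the classical $L^{p}$ theory for such operators while carefully tracking the sharp dependence of the operator norm on the exponent~$p$.

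First I would verify that $D^{2}K$ satisfies the standard Calder\'{o}n--Zygmund kernel estimates. The regularity bound $|\nabla D^{2}K(x)|=|D^{3}K(x)|\leq C_{d}|x|^{-d-1}$ is precisely \textbf{(K4)}, and in particular it yields H\"{o}rmander's condition on $D^{2}K$. The size bound $|D^{2}K(x)|\leq C|x|^{-d}$ can be extracted by combining the pointwise control on $D^{3}K$ in \textbf{(K4)} with the radial structure \textbf{(K2)} and the one-sided monotonicity \textbf{(K3)} near the origin: an integration along rays from the boundary of the regular region back towards the singularity recovers the $|x|^{-d}$ bound on $D^{2}K$.

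Next I would establish the $L^{2}$-boundedness of $T$. For a radial kernel $K(x)=\tilde{K}(|x|)$, a direct computation gives
\begin{equation*}
\partial_{i}\partial_{j}K(x)=\frac{\tilde{K}'(|x|)}{|x|}\,\delta_{ij}+\Bigl(\tilde{K}''(|x|)-\frac{\tilde{K}'(|x|)}{|x|}\Bigr)\frac{x_{i}x_{j}}{|x|^{2}},
\end{equation*}
which exhibits the canonical Calder\'{o}n--Zygmund form $|x|^{-d}\,\Omega(x/|x|)$ modulo a term whose angular part has mean zero on the sphere, a property that follows from the monotonicity hypotheses \textbf{(K3)}. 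The $L^{2}$-bound then follows via Plancherel from the boundedness of the Fourier multiplier $-\xi\otimes\xi\,\widehat{K}(\xi)$, or alternatively by the $T(1)$ theorem applied to the principal-value operator.

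With the H\"{o}rmander condition and the $L^{2}$-boundedness established, the standard Calder\'{o}n--Zygmund decomposition yields the weak-type $(1,1)$ estimate with some constant $C_{\ast}$ independent of $p$; Marcinkiewicz interpolation between weak-type $(1,1)$ and strong-type $(2,2)$ then produces the $L^{p}$ bound on $1<p\leq 2$ with $C_{p}\leq C_{\ast}p/(p-1)$. Exploiting the symmetry $K(x)=K(-x)$, which makes $T^{\ast}$ essentially coincide with $T$, duality of the preceding case yields $C_{p}\leq C_{\ast}p$ for $2\leq p<\infty$. The passage from $\mathbb{R}^{d}$ to the bounded domain $\Omega$ is routine (zero-extension of $\psi$, restriction of the image). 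The main obstacle is the explicit tracking of the linear growth of $C_{p}$ in $p$ (respectively the $1/(p-1)$ singularity as $p\to 1^{+}$), since these sharp constants do not fall out of the textbook formulations of the Calder\'{o}n--Zygmund decomposition or Marcinkiewicz interpolation; obtaining them typically requires either a dyadic refinement in the spirit of Stein--Str\"{o}mberg or a sharp maximal function estimate \`{a} la Fefferman--Stein.
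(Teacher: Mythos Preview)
The paper does not give its own proof of this lemma: it is simply quoted from \cite[Lemma~2]{BRB} (Bedrossian--Rodr\'{i}guez--Bertozzi) and stated ``for the readers' convenience'' without argument. So there is no in-paper proof to compare against; your task is really to reconstruct the argument behind the cited result.

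Your outline is the correct one and is essentially what lies behind \cite{BRB}: recognize $\psi\mapsto \nabla(\nabla K\ast\psi)$ as a Calder\'{o}n--Zygmund operator with kernel $D^{2}K$, verify the size and H\"{o}rmander regularity conditions from \textbf{(K1)}--\textbf{(K4)}, obtain $L^{2}$-boundedness via the Fourier side (or the cancellation built into the radial structure), deduce the weak-type $(1,1)$ bound by the Calder\'{o}n--Zygmund decomposition, interpolate for $1<p<2$, and pass to $p>2$ by duality. The passage to the bounded domain $\Omega$ by zero-extension and restriction is indeed routine. One remark: you are somewhat pessimistic about the sharp $p$-dependence. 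The growth $C_{p}\sim p$ as $p\to\infty$ (and, by duality, $C_{p}\sim p/(p-1)$ as $p\to 1^{+}$) for Calder\'{o}n--Zygmund operators is classical and does fall out of the standard machinery once one keeps track of the constants in the weak-$(1,1)$ bound and in Marcinkiewicz interpolation; no Stein--Str\"{o}mberg-type refinement or sharp maximal function argument is needed here. So the ``main obstacle'' you flag is not really an obstacle.
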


\red{After these preliminaries, \blue{we assume} for the remainder of this paper that $d=2$, which implies that
\eqref{GN} and \eqref{Agmon} are valid and the embedding $V\subset L^p(\Omega)$ is continuous and compact for
$1\le p<+\infty$.}  \blue{We now report the notion of strong solution introduced in \cite{FGGS}}:

\begin{defn}
\label{strongsol} Assume that \,$\boldsymbol{u}_{0}\in V_{div}$, $\varphi _{0}\in
V\cap C^{\beta }(\overline{\Omega })$\, for some \,$\beta \in (0,1)$, and\,
$\boldsymbol{v}\in L^{2}(0,T;G_{div})$\, \red{are given}.
A weak solution $[\boldsymbol{u},\varphi ]$ to the state system \eqref{sy3}--\eqref{sy6}
on $[0,T]$ corresponding to $[\boldsymbol{u}_{0},\varphi _{0}]$
is called a strong solution if and only if it holds that
\begin{align}
& \boldsymbol{u}\in L^{\infty }\left( 0,T;V_{div}\right) \cap
L^{2}(0,T;H^{2}\left( \Omega \right) ^{2})\,,\qquad \boldsymbol{u}_{t}\in
L^{2}\left( 0,T;G_{div}\right) \,,  \label{stt1} \\
& \varphi \in L^{\infty }(0,T;V)\cap L^{2}(0,T;H^{2}(\Omega ))\,,\qquad
\varphi _{t}\in L^{2}(0,T;H)\,,  \label{stt1bis} \\
& \boldsymbol{u}_{t}-2\mbox{\rm div}\left( \nu (\varphi )D\boldsymbol{u}\right)
+(\boldsymbol{u}\cdot \nabla )\boldsymbol{u}+\nabla \pi =-(K*\varphi)\nabla \varphi +%
\boldsymbol{v}\,  \red{\quad\mbox{a.e. in $\,Q$,}}\label{stt2}  \\
& \varphi _{t}+\boldsymbol{u}\cdot \nabla \varphi =\Delta B(\varphi )-%
\mbox{\rm div}\,\big(m(\varphi )(\nabla K\ast \varphi )\big)\, \red{\quad\mbox{a.e. in $\,Q$,}} \label{stt3}  \\
& \mbox{\rm div}\,(\boldsymbol{u})=0\,\red{\quad\mbox{a.e. in $\,Q$,}}   \label{stt4}\\
&\boldsymbol{u}=\mathbf{0}\,,\quad \big[\nabla B(\varphi )-m(\varphi )(\nabla
K\ast \varphi )\big]\cdot \boldsymbol{n}=0\,\red{\quad\mbox{a.e. on $\,\Sigma$\,,}} \label{stt5}
\end{align}
\blue{for some $\pi \in L^2(0,T;V)$.}
\end{defn}

\red{We have the following result} (see \cite[Thm.3.6]{FGGS} and \cite[Rem.4.5]{FGGS}, cf. also \cite[Rem.3.7]{FGGS}).

\begin{thm}
\label{reg-thm} Let the assumptions \textbf{(V)}, \textbf{(K)}, \textbf{(H1)},
\textbf{(H2*)}--\textbf{(H5)} hold true, and assume that \red{either} $K\in
W_{loc}^{2,1}(\mathbb{R}^{2})$\, or  \,$K$ is admissible. Let
$\boldsymbol{u}_{0}\in G_{div}$\, and \,$\varphi _{0}\in V\cap L^{\infty }(\Omega )$\,
with \,$F(\varphi _{0})\in L^{1}(\Omega )$\, and \,$M(\varphi _{0})\in
L^{1}(\Omega )$\, \red{be given}, where \,$M$\, is defined as in Theorem \ref{uniqthmdeg}.
\red{Moreover, suppose that}\, $\boldsymbol{v}\in L^{2}(0,T;G_{div})$. Then, for every $T>0$,
\red{the state system} %
\eqref{sy3}--\eqref{sy6} admits a weak solution $[\boldsymbol{u},\varphi ]$
on $[0,T]$.
\red{If, in addition,} $\boldsymbol{u}_{0}\in V_{div}$ and $\varphi
_{0}\in V\cap C^{\beta }(\overline{\Omega })$ for some $\beta \in (0,1)$\red{,
then the state system} \eqref{sy3}--\eqref{sy6} admits a unique strong solution in the sense
of Definition \ref{strongsol}.
Finally, if $\varphi _{0}\in H^{2}(\Omega )$ fulfills the compatibility condition
\begin{equation}
\left[\nabla B(\varphi _{0})-m(\varphi _{0})(\nabla K\ast
\varphi _{0})\right]\cdot \boldsymbol{n}=0\,,\quad \mbox{ a.e. on }\partial \Omega \,,
\label{comp}
\end{equation}%
then the strong solution also satisfies
\begin{equation}
\varphi \in L^{\infty }(0,T;H^{2}(\Omega ))\,,\qquad \varphi _{t}\in
L^{\infty }(0,T;H)\cap L^{2}(0,T;V)\,.  \label{reg3}
\end{equation}
\red{Moreover,} there exists a continuous and nondecreasing function
$ \mathbb{Q}_1:[0,\infty )\rightarrow \lbrack 0,+\infty )$,
which only depends on $F$, $m$, $K$, $\nu $, $\Omega $, $T$, $\boldsymbol{u}_{0}$ and $\varphi _{0}$, such that
\begin{align}
& \Vert \boldsymbol{u}\Vert _{L^{\infty }\left( \left[ 0,T\right]
;V_{div}\right) \cap L^{2}(0,T;H^{2}(\Omega )^{2})}+\Vert \boldsymbol{u}%
_{t}\Vert _{L^{2}\left( \left[ 0,T\right] ;G_{div}\right) }+\Vert \varphi
\Vert _{L^{\infty }\left( \left[ 0,T\right] ;H^{2}(\Omega )\right) }
\notag \\
& +\Vert \varphi _{t}\Vert _{L^{\infty }\left( \left[ 0,T\right] ;H\right)
\cap L^{2}(0,T;V)}  \,\leq\,  \mathbb{Q}_1\left( \Vert \boldsymbol{v}\Vert
_{L^{2}(0,T;G_{div})}\right) .  \label{bound1}
\end{align}%
\end{thm}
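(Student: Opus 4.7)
The plan is to split the argument into four stages that match the four assertions of the theorem, and to build the strong solution through a regularization of the degenerate mobility and singular potential.

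First, the existence of a weak solution is essentially a consequence of Theorem \ref{uniqthmdeg}, since $L^2(0,T;G_{div})$ embeds continuously into $L^2(0,T;V_{div}')$ and the remaining data hypotheses match those already assumed there; no new argument is needed. To construct the strong solution under the reinforced initial conditions $\uvec_0\in V_{div}$, $\varphi_0\in V\cap C^\beta(\overline{\Omega})$, I would approximate $F$ by a regular, polynomially controlled $\Fe\in C^3(\mathbb{R})$ (for instance, by linearizing $F$ outside $[-1+\e,1-\e]$ while preserving convexity and the relevant $C^2$ bounds) and the degenerate mobility by $m_\e:=m+\e>0$. For the resulting system with regular potential and strictly positive mobility, existence of a smooth approximate solution $[\ue,\phie]$ is standard (see, e.g., \cite{FG1,FG2,FGK}).

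The core of the proof, and the step I expect to be the main obstacle, is to derive a priori bounds on $[\ue,\phie]$ in the strong norms \eqref{stt1}--\eqref{stt1bis} that are uniform in $\e$. Following the Elliott--Garcke idea \cite{EG}, I would rewrite the phase equation in the form
\begin{equation*}
\partial_t\phie+\ue\cdot\nabla\phie\,=\,\Delta B_\e(\phie)-\mbox{div}\bigl(m_\e(\phie)(\nabla K\ast\phie)\bigr),
\end{equation*}
with $B_\e$ the antiderivative of $\lambda_\e:=m_\e\Fe''$ as in \eqref{defB}, and test with $\partial_t\phie$, exploiting the uniform lower bound $\lambda_\e\ge\alpha_0/2>0$ granted by \textbf{(H5)} to control $\partial_t\phie$ in $L^2(H)$ and $\nabla B_\e(\phie)$ in $L^\infty(H)$. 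Elliptic regularity for $B_\e(\phie)$ with the no-flux boundary condition from \eqref{stt5}, combined with Lemma \ref{admiss} to handle $\mbox{div}(m_\e(\phie)\nabla K\ast\phie)$ in $L^p$ for some $p>2$ (which is precisely where the admissibility of $K$ is used), then yields a uniform bound on $\phie$ in $L^2(0,T;H^2(\Omega))$. For the velocity, I would test the momentum equation by $\partial_t\ue$ (equivalently, apply the Stokes operator), controlling the convective term via \eqref{GN} and \eqref{Agmon}, the viscous term through the Lipschitz continuity of $\nu$, and the capillary forcing $(K\ast\phie)\nabla\phie$ in $L^2(Q)$ thanks to the just-obtained $H^2$-bound on $\phie$. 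A Gronwall argument closes the bootstrap, and Aubin--Lions compactness then allows the passage to the limit $\e\to 0$, with the pointwise constraint $|\phie|\le 1$ preserved by lower semicontinuity.

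Uniqueness follows by the standard energy method on the difference of two strong solutions, using \eqref{GN}, \eqref{Agmon}, and the Lipschitz continuity of $\nu$ and $\lambda$ to close a Gronwall inequality. For the additional regularity \eqref{reg3} under the compatibility condition \eqref{comp}, I would formally differentiate \eqref{stt3} in time and test with $\varphi_t$; condition \eqref{comp} ensures that at $t=0$ the boundary flux $[\nabla B(\varphi_0)-m(\varphi_0)(\nabla K\ast\varphi_0)]\cdot\nvec$ vanishes, so that the initial value of $\Vert\varphi_t(0)\Vert$ is controlled by the data and no uncontrolled boundary contribution appears in the integration by parts. Finally, \eqref{bound1} is obtained by tracking the $\Vert\vvec\Vert_{L^2(0,T;G_{div})}$-dependence at each step via Young's inequality \eqref{Young}, so that Gronwall's lemma assembles the constants into a continuous and nondecreasing function $\mathbb{Q}_1$ of that norm.
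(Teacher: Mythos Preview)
The paper does not give its own proof of this theorem: it is quoted verbatim from \cite{FGGS} (see the sentence immediately preceding the statement, which cites \cite[Thm.~3.6]{FGGS} and \cite[Rem.~4.5, Rem.~3.7]{FGGS}). So strictly speaking there is no in-paper argument to compare your proposal against.

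That said, the introduction of the present paper does describe the strategy actually used in \cite{FGGS}: ``a time-discretization scheme combined with a suitable approximation of $m$ and $F$.'' Your proposal keeps the approximation of $m$ and $F$ but replaces the time-discretization by an appeal to existing strong-solution theory for the regularized system, citing \cite{FG1,FG2,FGK}. This is where your sketch is thin: those references treat constant mobility (and in part constant viscosity), so strong solutions for the regularized but still \emph{nonconstant-mobility, variable-viscosity} nonlocal system are not simply ``standard'' from them; the time-discretization in \cite{FGGS} is introduced precisely to manufacture the approximate strong solutions you take for granted. A second point you leave unexplained is where the H\"older assumption $\varphi_0\in C^\beta(\overline{\Omega})$ enters. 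In the scheme of \cite{FGGS} it is used to obtain a uniform $L^\infty$/H\"older bound on the approximate order parameter (via a De~Giorgi--Nash--Moser type argument for the quasilinear equation satisfied by $B_\e(\phie)$), which is what allows the $H^2$ elliptic estimate you outline to close uniformly in~$\e$. Your a~priori estimate strategy (test with $\partial_t\phie$, then elliptic regularity for $B_\e(\phie)$, then $S\ue$-test for the velocity) and your treatment of \eqref{reg3} and \eqref{bound1} are otherwise in line with what one expects from \cite{FGGS}.
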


\section{Stability of the control-to-state \red{mapping}}
\label{stability}
\setcounter{equation}{0}

We shall henceforth \red{assume that the initial data $\uvec_0$, $\varphi_0$ satisfy} the following assumptions:
\begin{description}
  \item[(H6)] $\uvec_0\in V_{div},\;\; \varphi_0\in H^2(\Omega) \red{\mbox{ satisfies }} \eqref{comp},
  \;\;  F(\varphi_0)\in L^1(\Omega),\;\;  M(\varphi_0)\in L^1(\Omega)$.
\end{description}
Then we set
\begin{align}
&\mathcal{V}:=L^2\left(0,T;G_{div}\right),\nonumber\\
&\mathcal{H}:=\,\big[H^1(0,T;G_{div})\cap C^0([0,T];V_{div})\cap L^2(0,T;H^2(\Omega)^2)\big]\nonumber
\\
&\hspace*{12mm}\times \big[C^1([0,T];H)
\cap H^1(0,T;V)\cap L^\infty(0,T;H^2(\Omega))\big].\label{controlmap1}
\end{align}
On account of Theorem \ref{reg-thm}, the control-to-state mapping
 \begin{align}
&\mathcal{S}:\mathcal{V}\to\mathcal{H},\qquad\vvec\in\mathcal{V}\mapsto\mathcal{S}(\vvec):=[\uvec,\varphi]\in\mathcal{H},\label{controlmap2}
\end{align}
where $[\uvec,\varphi]$ is the (unique) strong solution to \eqref{sy3}--\eqref{sy6} corresponding to
the fixed initial data $\uvec_0$, $\varphi_0$ and to the control $\vvec\in\mathcal{V}$, is well defined and locally bounded.
We now establish some global stability estimates for the strong solutions to the state system
\eqref{sy3}--\red{\eqref{sy6}. In doing this, we} can argue formally, since the arguments can be made rigorous within the approximation scheme devised in \cite{FGGS}.
The first result is the following.
\begin{lem}\label{stab-est-res1}
Let the assumptions \textbf{(V)}, \textbf{(K)}, \textbf{(H1)}, and
\textbf{(H2*)}--\textbf{(H6)} hold true, and suppose that $K\in W^{2,1}_{loc}(\mathbb{R}^2)$ or that $K$ is admissible.
Assume moreover that controls $\vvec_i\in\mathcal{V}$,
   $i=1,2$, are given and that $[\uvec_i,\varphi_i]:=\mathcal{S}(\vvec_i)$, $i=1,2$, are the associated solutions to
the state system   \eqref{sy3}--\red{\eqref{sy6}}. Then there exists a continuous function\, $\mathbb{Q}_2:[0,\infty)^2\to[0,\infty)$, which
is nondecreasing in both its arguments and depends only on the data $F$, $m$, $K$, $\nu_1$, $\Omega$,
$T$, $\uvec_0$ and $\varphi_0$, such that we have the estimate
\begin{align}
&
\Vert\uvec_2-\uvec_1\Vert_{C^0([0,t];G_{div})}^2
\,+\,\Vert\uvec_2-\uvec_1\Vert_{L^2(0,t;V_{div})}^2\,+\,\Vert\varphi_2-\varphi_1\Vert_{C^0([0,t];H)}^2
\,+\,\Vert\varphi_2-\varphi_1\Vert_{L^2(0,t;V)}^2\nonumber
\\[1mm]
\label{stabi1}
&
\leq\,\mathbb{Q}_2\big(\Vert\vvec_1\Vert_{L^2(0,T;G_{div})},\Vert\vvec_2\Vert_{L^2(0,T;G_{div})} \big)\,\Vert\vvec_2-\vvec_1\Vert_{L^2(0,T;V_{div}^\prime)}^2\,  \blue{\quad \forall\,t\in (0,T]}.
\end{align}

\end{lem}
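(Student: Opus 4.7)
The plan is to derive energy estimates for the differences $\uvec:=\uvec_2-\uvec_1$, $\varphi:=\varphi_2-\varphi_1$, $\vvec:=\vvec_2-\vvec_1$, and to close the argument via Gronwall's lemma. Subtracting the strong formulations \eqref{stt2}--\eqref{stt5} for the two solutions yields a system in which the viscous term reads $-2\,\mbox{div}(\nu(\varphi_2)D\uvec) - 2\,\mbox{div}((\nu(\varphi_2)-\nu(\varphi_1))D\uvec_1)$, the convection splits as $b(\uvec_2,\uvec_2,\cdot)-b(\uvec_1,\uvec_1,\cdot)=b(\uvec,\uvec_2,\cdot)+b(\uvec_1,\uvec,\cdot)$, the capillary term becomes $-(K\ast\varphi)\nabla\varphi_2-(K\ast\varphi_1)\nabla\varphi$, and the diffusive phase-field term takes the form $\Delta B(\varphi_2)-\Delta B(\varphi_1)=\mbox{div}(\lambda(\varphi_2)\nabla\varphi) + \mbox{div}((\lambda(\varphi_2)-\lambda(\varphi_1))\nabla\varphi_1)$, using that $B'=\lambda=mF''$ by \eqref{defB}. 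I would first test the momentum equation with $\uvec\in V_{div}$ (which kills the pressure and $b(\uvec_1,\uvec,\uvec)$ by antisymmetry) and the phase equation with $\varphi\in V$.

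On the left-hand side, assumption \textbf{(V)} together with Korn's inequality gives $2(\nu(\varphi_2)D\uvec,D\uvec)\ge\nu_1\|\nabla\uvec\|^2$, and \textbf{(H5)} yields $(\lambda(\varphi_2)\nabla\varphi,\nabla\varphi)\ge\alpha_0\|\nabla\varphi\|^2$, so after summation I get
\begin{equation*}
\tfrac{1}{2}\tfrac{d}{dt}(\|\uvec\|^2+\|\varphi\|^2)+\nu_1\|\nabla\uvec\|^2+\alpha_0\|\nabla\varphi\|^2 \,\le\, \text{RHS}.
\end{equation*}
The RHS is a finite sum of bilinear terms that I would estimate using \eqref{GN} with $q=2$, \textbf{(K)}, Young's convolution inequality, the Lipschitz character of $\nu$ and the $C^1$-regularity of $\lambda$ and $m$ on $[-1,1]$. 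Concretely: $|b(\uvec,\uvec_2,\uvec)|\le \widehat C_1\|\uvec\|\|\nabla\uvec\|\|\nabla\uvec_2\|$, the viscosity defect is bounded by $C\|\varphi\|_{L^4}\|D\uvec_1\|_{L^4}\|\nabla\uvec\|$, the capillary defect by $C\|\varphi\|\|\nabla\varphi_2\|_{L^4}\|\uvec\|_{L^4}+C\|K\ast\varphi_1\|_{L^\infty}\|\nabla\varphi\|\|\uvec\|$, the convolution defect in the phase equation by $C\|\varphi\|_{L^4}\|\nabla\varphi\|+C\|\varphi\|\|\nabla\varphi\|$, the $\lambda$-defect by $C\|\varphi\|_{L^4}\|\nabla\varphi_1\|_{L^4}\|\nabla\varphi\|$, the transport defect by $|(\uvec\cdot\nabla\varphi_2,\varphi)|\le\|\uvec\|\|\nabla\varphi_2\|_{L^4}\|\varphi\|_{L^4}$, and finally the forcing by $\langle\vvec,\uvec\rangle_{V_{div}}\le\|\vvec\|_{V_{div}'}\|\nabla\uvec\|$.

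Every RHS contribution carries an $L^4$-norm of a gradient of one of the reference solutions; thanks to \eqref{bound1} these are uniformly controlled in $L^\infty(0,T)$ by a continuous function $\mathbb{Q}_1(\|\vvec_i\|_\mathcal{V})$, since $\varphi_i\in L^\infty(0,T;H^2(\Omega))\hookrightarrow L^\infty(0,T;W^{1,4}(\Omega))$ and $\uvec_i\in L^\infty(0,T;V_{div})\cap L^2(0,T;H^2)$. I would then apply Young's inequality \eqref{Young} with a small parameter $\delta>0$ in order to absorb $\delta\|\nabla\uvec\|^2$ and $\delta\|\nabla\varphi\|^2$ into the coercive left-hand side, so as to arrive at a differential inequality of the form
\begin{equation*}
\tfrac{d}{dt}(\|\uvec\|^2+\|\varphi\|^2)+\tfrac{\nu_1}{2}\|\nabla\uvec\|^2+\tfrac{\alpha_0}{2}\|\nabla\varphi\|^2 \,\le\, \Lambda(t)(\|\uvec\|^2+\|\varphi\|^2) + C\|\vvec\|_{V_{div}'}^2,
\end{equation*}
where $\Lambda\in L^1(0,T)$ has norm bounded by $\mathbb{Q}(\|\vvec_1\|_\mathcal{V},\|\vvec_2\|_\mathcal{V})$. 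Gronwall's lemma together with the homogeneous initial data $\uvec(0)=0$, $\varphi(0)=0$, and integration of the two dissipative terms from $0$ to $t$, then yields \eqref{stabi1} with an explicit constant $\mathbb{Q}_2$.

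The main technical obstacle I anticipate is the $\lambda$-defect term $\int(\lambda(\varphi_2)-\lambda(\varphi_1))\nabla\varphi_1\cdot\nabla\varphi\,dx$: here one needs $\lambda\in C^1([-1,1])$ (guaranteed by \textbf{(H2*)}) so that $\lambda$ is globally Lipschitz on $[-1,1]$, and one needs $\nabla\varphi_1\in L^\infty(0,T;L^4(\Omega))$, which is exactly what the strong-solution regularity \eqref{reg3}--\eqref{bound1} and the 2D Sobolev embedding $H^1\hookrightarrow L^4$ provide. Similarly, the viscosity defect is the reason the strong regularity $D\uvec_1\in L^2(0,T;L^4(\Omega))$ (obtained from $\uvec_1\in L^2(0,T;H^2)$) is essential. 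Once these regularity ingredients from Theorem \ref{reg-thm} are invoked, the remaining estimates are routine.
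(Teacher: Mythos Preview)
Your proposal is correct and follows essentially the same route as the paper: test the difference of \eqref{stt3} by $\varphi$ and the difference of \eqref{stt2} by $\uvec$, extract the coercive terms via \textbf{(H5)} and \textbf{(V)}, estimate all defect terms using the 2D Gagliardo--Nirenberg inequality \eqref{GN}, the Lipschitz/$C^1$ properties of $\nu,\lambda,m$, and the strong-solution bounds \eqref{bound1}, and close with Gronwall. The only cosmetic difference is that the paper does not spell out the velocity estimate in full but instead quotes the differential inequality \eqref{diffineq1} from \cite[Thm.~7]{FGG}; your explicit treatment of the viscosity defect and convection splitting is equivalent.
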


\begin{proof}
In this proof, we omit the explicit dependence on time for the sake of simplicity.
Let us test the difference between \eqref{stt3}, written for each of the two solutions, by $\varphi:=\varphi_2-\varphi_1$ in $H$. Taking
\eqref{stt4} into account, we obtain the  differential identity
\begin{align}
&\frac{1}{2}\frac{d}{dt}\Vert\varphi\Vert^2+\left(\uvec\cdot\nabla\varphi_2,\varphi\right)
+\left(\nabla\left(B\left(\varphi_2\right)-B\left(\varphi_1\right)\right),\nabla\varphi\right)\nonumber\\
&=
\left(\left(m(\varphi_2)-m(\varphi_1)\right)\left(\nabla K\ast\varphi_2\right)+m(\varphi_1)\nabla K\ast\varphi,\nabla\varphi\right)\,,
\label{diffid4}
\end{align}
where $\uvec = \uvec_1 - \uvec_2$. \red{Using {\bf (H5)}, the mean value theorem, the Gagliardo--Nirenberg inequality
\eqref{GN}, the boundedness of $\varphi_2$, the regularity result \eqref{reg3}, and Young's inequality, we find that the}
third term on the left-hand side of \eqref{diffid4} can be estimated \red{as follows} (cf. \eqref{defB}):
\begin{align}
&\left(\nabla\left(B\left(\varphi_2\right)-B\left(\varphi_1\right)\right),\nabla\varphi\right)
\,=\,\left(\lambda(\varphi_2)\nabla\varphi+\left(\lambda(\varphi_2)-\lambda(\varphi_1)\right)\nabla\varphi_1,\nabla\varphi\right)
\nonumber\\
&\geq\,\red{\alpha_0}\,\Vert\nabla\varphi\Vert^2-k_1\,\Vert\varphi\Vert_{L^4(\Omega)}\,\Vert\nabla\varphi_1\Vert_{L^4(\Omega)^2}\,
\Vert\nabla\varphi\Vert
\nonumber\\
&\geq \red{\alpha_0}\,\Vert\nabla\varphi\Vert^2-C \,\Vert\varphi_1\Vert_{H^2(\Omega)}\, \left(\Vert\varphi\Vert +
\Vert\varphi\Vert^{1/2}\Vert\nabla\varphi\Vert^{1/2}\right)\Vert\nabla\varphi\Vert\nonumber\\
&\geq\,\red{\frac{\alpha_0}{2}}\,\Vert\nabla\varphi\Vert^2- \mathbb{Q} \,\Vert\varphi\Vert^2,\label{est16}
\end{align}
where $k_1:=\Vert \lambda'\Vert_{C([-1,1])}$. \red{Here, and in the remainder of this proof, $\mathbb{Q}$ stands for a function having similar properties as
the function $\mathbb{Q}_2$ in the statement of  the theorem}.

Concerning the right-hand side of \eqref{diffid4}, we have\red{, setting
\begin{equation}\label{mbounds}
m_\infty:=\max_{\varphi\in [-1,1]} \,|m(\varphi)| \quad\mbox{and}\quad
m_\infty':=\max_{\varphi\in [-1,1]}\,|m'(\varphi)|,
\end{equation}
and using the mean value theorem and Young's inequality,}
\begin{align}
&\left|\left(\left(m(\varphi_2)-m(\varphi_1)\right)\left(\nabla K\ast\varphi_2\right)
+m(\varphi_1)\nabla K\ast\varphi,\nabla\varphi\right)
\right|\notag\\
&\leq \left(m_\infty '+m_\infty\right)\,\Vert\nabla K\Vert_{\red{L^1(\Omega)}}\,\Vert\varphi\Vert\,
\Vert\nabla\varphi\Vert\nonumber\\
&\leq\frac{\red{\alpha_0}}{4}
\,\Vert\nabla\varphi\Vert^2+C\,\Vert\varphi\Vert^2.
\end{align}
\red{Moreover, invoking \eqref{reg3}, as well as H\"older's and Young's inequalities, we readily find that}
\begin{equation}
\left|\left(\uvec\cdot\nabla\varphi_2,\varphi\right)\right|
\,\leq\,\Vert\uvec\Vert_{L^4(\Omega)^2}\,\Vert\nabla\varphi_2\Vert_{L^4(\Omega)^2}\,\Vert\varphi\Vert
\,\leq\, \frac{\nu_1}{8}\,\Vert\nabla\uvec\Vert^2
+ \mathbb{Q}\,\Vert\varphi\Vert^2.\label{est17}
\end{equation}
Hence, \red{combining \eqref{diffid4}--\eqref{est17}}, we obtain that
\begin{align}
&\frac{1}{2}\,\frac{d}{dt}\,\Vert\varphi\Vert^2
+\frac{\red{\alpha_0}}{4}
\,\Vert\nabla\varphi\Vert^2\,\leq \,\mathbb{Q}\,\Vert\varphi\Vert^2+\frac{\nu_1}{8}\,\Vert\nabla\uvec\Vert^2
\red{\quad\mbox{a.e. in }\,(0,T)}.\label{est18}
\end{align}
On the other hand, by testing the difference of \eqref{stt2}, written for each of the two solutions, by $\uvec$
in $G_{div}$, and arguing as in the proof of \cite[Thm. 7]{FGG}, the following differential inequality can be deduced:
\begin{align}
& \frac{1}{2}\,\frac{d}{dt}\,\Vert \uvec\Vert ^{2}+\frac{\nu _{1}}{4}\,\Vert \nabla
\uvec \Vert ^{2}\,\leq\, \frac{\red{\alpha_0}}{\red{8}}\,\Vert \nabla \varphi \Vert
^{2}  \notag \\
& +C\left(1+\Vert \nabla \uvec_{2}\Vert ^{2}\Vert \uvec_{2}\Vert _{H^{2}\red{(\Omega)}}^{2}+\Vert
\varphi _{1}\Vert _{L^{4}\red{(\Omega)}}^{2}+\Vert \varphi _{2}\Vert _{L^{4}\red{(\Omega)}}^{2}\right)\Vert
\varphi \Vert ^{2} \notag\\
&+ C\,\Vert \nabla \uvec_{1}\Vert ^{2}\Vert \uvec\Vert ^{2} +\frac{1}{\nu_1}\Vert\vvec\Vert_{V_{div}^\prime}^2\,
\red{\quad\mbox{a.e. in \,$(0,T)$}}.
\label{diffineq1}
\end{align}%
Therefore, we get
\begin{equation}
\frac{1}{2}\frac{d}{dt}\Vert\uvec\Vert^2+\frac{\nu_1}{4}\Vert\nabla\uvec\Vert^2\leq
\red{\frac{\alpha_0}{8}}\,\Vert \nabla \varphi \Vert^{2}
+\Lambda_1\left(\Vert\varphi\Vert^2+ \Vert\uvec\Vert^2\right)+\frac{1}{\nu_1}\Vert\vvec\Vert_{V_{div}^\prime}^2
\quad\,\red{\mbox{a.e. in \,$(0,T)$}},
\label{est19}
\end{equation}
where $\vvec:=\vvec_2-\vvec_1$ and
$$
\Lambda_1 := C\left(1+ \mathbb{Q} + \Vert \nabla \uvec_{2}\Vert ^{2}\Vert \uvec_{2}\Vert _{H^{2}\red{(\Omega)}}^{2}+\Vert
\varphi _{1}\Vert _{L^{4}\red{(\Omega)}}^{2}+\Vert \varphi _{2}\Vert _{L^{4}\red{(\Omega)}}^{2}\right)\in L^1(0,T)\,.
$$
By adding \eqref{est18} to \eqref{est19}, and applying Gronwall's lemma to
the resulting differential inequality, we finally obtain the asserted stability estimate \eqref{stabi1}.
\end{proof}

The following higher-order stability estimate for the solution component $\varphi$
will be crucial for the proof of the Fr\'echet differentiability of the control-to-state mapping.
In order to achieve this, we need to strengthen \red{the hypotheses} \textbf{(H1)} and \textbf{(H2*)}
\red{somewhat. More precisely, we postulate the following conditions:}
\begin{description}
  \item[\textbf{(H1*)}] The mobility satisfies \textbf{(H1)} and also $m\in C^2\left([-1,1\right])$.
  \item[\textbf{(H2**)}] $F\in C^4(-1,1)$ and $\lambda:=mF''\in C^2\left([-1,1]\right)$.
\end{description}

Moreover, we need the following lemma to handle some boundary terms.

\begin{lem}\label{trace-product}
Let $\phi,\psi\in H^{1/2}(\partial\Omega)\cap L^\infty(\partial\Omega)$. Then $\phi\psi\in H^{1/2}(\partial\Omega)\cap L^\infty(\partial\Omega)$, and we have
\begin{align*}
&\Vert\phi\psi\Vert_{H^{1/2}(\partial\Omega)}
\le \Vert\phi\Vert_{L^\infty(\partial\Omega)}\Vert\psi\Vert_{H^{1/2}(\partial\Omega)}
+\Vert\psi\Vert_{L^\infty(\partial\Omega)}\Vert\phi\Vert_{H^{1/2}(\partial\Omega)}.
\end{align*}
\end{lem}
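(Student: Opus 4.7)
The plan is to work directly with the Slobodeckij characterization of the $H^{1/2}$-norm on $\partial\Omega$. Since we are in two dimensions, $\partial\Omega$ is a one-dimensional manifold, so I would write
$$
\|f\|_{H^{1/2}(\partial\Omega)}^2 \,=\, \|f\|_{L^2(\partial\Omega)}^2 + [f]_{1/2}^2,\qquad
[f]_{1/2}^2 := \iint_{\partial\Omega\times\partial\Omega}\frac{|f(x)-f(y)|^2}{|x-y|^2}\,dS(x)\,dS(y),
$$
and estimate the $L^2$-part and the seminorm separately. The pointwise bound $\|\phi\psi\|_{L^\infty(\partial\Omega)}\le \|\phi\|_{L^\infty(\partial\Omega)}\|\psi\|_{L^\infty(\partial\Omega)}$ is immediate, so the essential point is to show that $\phi\psi$ sits in $H^{1/2}$ with the stated quantitative control.

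For the seminorm, I would split
$$
(\phi\psi)(x)-(\phi\psi)(y)\,=\,\phi(x)\bigl[\psi(x)-\psi(y)\bigr]+\psi(y)\bigl[\phi(x)-\phi(y)\bigr],
$$
use $|\phi(x)|\le \|\phi\|_{L^\infty(\partial\Omega)}$ and $|\psi(y)|\le \|\psi\|_{L^\infty(\partial\Omega)}$ for a.e. $x,y\in\partial\Omega$, and then invoke Minkowski's inequality in the weighted space $L^2\bigl(\partial\Omega\times\partial\Omega;\,|x-y|^{-2}\,dS\otimes dS\bigr)$ to obtain
$$
[\phi\psi]_{1/2}\,\le\,\|\phi\|_{L^\infty(\partial\Omega)}\,[\psi]_{1/2}+\|\psi\|_{L^\infty(\partial\Omega)}\,[\phi]_{1/2}.
$$
For the $L^2$-part, I would simply use $\|\phi\psi\|_{L^2(\partial\Omega)}\le \|\phi\|_{L^\infty(\partial\Omega)}\|\psi\|_{L^2(\partial\Omega)}$.

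To assemble the final inequality, I would set
$a:=\|\phi\|_{L^\infty}$, $b:=\|\psi\|_{L^\infty}$, $A:=\|\phi\|_{L^2}$, $B:=\|\psi\|_{L^2}$, $\alpha:=[\phi]_{1/2}$, $\beta:=[\psi]_{1/2}$.
The target inequality, squared, becomes
$$
a^2B^2+(a\beta+b\alpha)^2\,\le\,\Bigl(a\sqrt{B^2+\beta^2}+b\sqrt{A^2+\alpha^2}\Bigr)^2,
$$
and, after expanding the right-hand side and applying Cauchy--Schwarz in the form $\sqrt{(B^2+\beta^2)(A^2+\alpha^2)}\ge AB+\alpha\beta$, this reduces to the manifestly nonnegative inequality $b^2A^2+2abAB\ge 0$. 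Hence the desired estimate holds.

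The only mildly delicate point is the correct use of Minkowski's inequality in the weighted $L^2$-space for the Slobodeckij seminorm; once this has been carried out, the rest is purely algebraic bookkeeping. No interpolation argument or harmonic extension is needed, which keeps the proof self-contained and short.
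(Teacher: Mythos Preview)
Your proof is correct and follows essentially the same approach as the paper: the authors also invoke the Slobodeckij seminorm characterization of $H^{1/2}(\partial\Omega)$ and declare the result an immediate consequence, without spelling out the product decomposition or the final algebraic verification you provide. Your write-up simply fills in the details the paper omits.
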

\begin{proof}
The proof is an immediate consequence of the definition of the space $H^{1/2}(\partial\Omega)$
with seminorm given by
\begin{align}
&|\phi|_{H^{1/2}(\partial\Omega)}^2=\int_{\partial\Omega}\int_{\partial\Omega}
\frac{|\phi(x)-\phi(y)|^2}{|x-y|^2}d\Gamma(x) d\Gamma(y),\label{H^1/2-seminorm}
\end{align}
where $d\Gamma(\cdot)$ is the surface measure on $\partial\Omega$ (see, e.g., \cite[Chapter IX, Section 18]{DiB}).
\end{proof}

We have \red{the following stability result.}

\begin{lem}\label{stablem2}
Let the assumptions \textbf{(V)}, \textbf{(K)}, \textbf{(H1*)},
\textbf{(H2**)}, \textbf{(H3)}-\textbf{(H6)} hold true, and suppose that $K\in W^{2,1}_{loc}(\mathbb{R}^2)$ or that $K$ is admissible.
Then there exists a continuous function $\mathbb{Q}_3:[0,\infty)^2\to[0,\infty)$, which
is nondecreasing in both its arguments and depends only on the data $F$, $m$, $K$, $\nu_1$, $\Omega$,
$T$, $\uvec_0$ and $\varphi_0$, such that we have for every $t\in (0,T]$ the estimate
\begin{align}
&\Vert\uvec_2-\uvec_1\Vert_{L^\infty\left(0,t;G_{div}\right)}^2+\Vert\uvec_2-\uvec_1\Vert_{L^2\left(0,t;V_{div}\right)}^2
+\Vert\varphi_2-\varphi_1\Vert_{L^\infty\left(0,t;V\right)}^2+
\Vert\varphi_2-\varphi_1\Vert_{L^2\left(0,t;H^2(\Omega)\right)}^2\nonumber\\
&+\Vert\varphi_2-\varphi_1\Vert_{H^1\left(0,t;H\right)}^2
\,\leq\,\mathbb{Q}_3\big(\Vert\vvec_1\Vert_{L^2(0,T;G_{div})},\Vert\vvec_2\Vert_{L^2(0,T;G_{div})} \big)\,
\Vert\vvec_2-\vvec_1\Vert_{L^2(0,T;V_{div}^\prime)}^2\,.
\label{stabest2}
\end{align}

\end{lem}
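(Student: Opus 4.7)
Since the $\uvec$-estimates and the lower-order $\varphi$-bounds in $L^\infty(0,t;H)\cap L^2(0,t;V)$ are already supplied by Lemma~\ref{stab-est-res1}, the task is to upgrade $\varphi:=\varphi_2-\varphi_1$ to $L^\infty(0,t;V)\cap L^2(0,t;H^2(\Omega))\cap H^1(0,t;H)$. Setting further $\uvec:=\uvec_2-\uvec_1$, $\vvec:=\vvec_2-\vvec_1$, $\psi:=\lambda(\varphi_2)-\lambda(\varphi_1)$, and $g:=m(\varphi_2)\nabla K\ast\varphi_2-m(\varphi_1)\nabla K\ast\varphi_1$, the difference of \eqref{stt3} (using $\nabla(B(\varphi_2)-B(\varphi_1))=\lambda(\varphi_2)\nabla\varphi+\psi\nabla\varphi_1$) and of the boundary conditions \eqref{stt5} yields
\begin{align*}
&\varphi_t+\uvec_1\cdot\nabla\varphi+\uvec\cdot\nabla\varphi_2\,=\,\mbox{div}\big(\lambda(\varphi_2)\nabla\varphi+\psi\nabla\varphi_1-g\big)\quad\mbox{a.e. in }Q,\\
&\big[\lambda(\varphi_2)\nabla\varphi+\psi\nabla\varphi_1-g\big]\cdot\nvec\,=\,0\quad\mbox{a.e. on }\Sigma.
\end{align*}
As in Lemma~\ref{stab-est-res1}, I proceed formally, each step being made rigorous in the approximation scheme of \cite{FGGS}.

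For the $L^\infty(V)\cap H^1(H)$-part I would test the above equation against $\varphi_t\in H$. Integration by parts in the divergence term produces a surface integral that vanishes thanks to the boundary condition above, leaving
\begin{align*}
\|\varphi_t\|^2+\tfrac12\tfrac{d}{dt}\!\int\!\lambda(\varphi_2)|\nabla\varphi|^2=\tfrac12\!\int\!\lambda'(\varphi_2)(\varphi_2)_t|\nabla\varphi|^2-\!\int\!(\uvec_1\cdot\nabla\varphi+\uvec\cdot\nabla\varphi_2)\varphi_t-\!\int\!\psi\nabla\varphi_1\cdot\nabla\varphi_t+\!\int\!g\cdot\nabla\varphi_t.
\end{align*}
The two terms containing $\nabla\varphi_t$ are the first real obstacle; I would rewrite them by integrating over $(0,t)$ and then integrating by parts in time. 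Since $\psi(0)=0$ and $g(0)=0$ (the initial data coincide), only the endpoint $t$ contributes, and this term is absorbed in $\|\nabla\varphi(t)\|^2$ by Young. The residual bulk integrals shift $\partial_t$ onto $\psi$, $\nabla\varphi_1$, $m(\varphi_i)$ and $\nabla K\ast\varphi_i$, each of which is controlled through Theorem~\ref{reg-thm}, in particular via $\varphi_i\in L^\infty(0,T;H^2(\Omega))$ and $(\varphi_i)_t\in L^\infty(0,T;H)\cap L^2(0,T;V)$, and via Lemma~\ref{admiss} (or $K\in W^{2,1}_{loc}(\mathbb{R}^2)$) for the convolution part. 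Repeated use of \eqref{GN}, \eqref{Agmon}, the pointwise bound $|\varphi_i|\leq 1$, \eqref{Young}, and the lower-order control of $\|\uvec\|_{L^2(V_{div})}$ and $\|\varphi\|_{L^2(V)}$ from Lemma~\ref{stab-est-res1} produces a Gronwall-ready estimate of the form $\|\nabla\varphi(t)\|^2+\int_0^t\|\varphi_t\|^2\leq\mathbb{Q}\cdot\bigl(\|\vvec\|_{L^2(V_{div}')}^2+\int_0^t\|\varphi\|_{H^2}^2\bigr)$.

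To close the bound I then need $\|\varphi\|_{L^2(H^2)}^2$, which I obtain by viewing the difference equation as the inhomogeneous Neumann problem
\begin{align*}
-\lambda(\varphi_2)\Delta\varphi=\nabla\lambda(\varphi_2)\cdot\nabla\varphi+\mbox{div}(\psi\nabla\varphi_1-g)-\varphi_t-\uvec_1\cdot\nabla\varphi-\uvec\cdot\nabla\varphi_2,\quad\lambda(\varphi_2)\partial_\nvec\varphi=\sigma\mbox{ on }\partial\Omega,
\end{align*}
with $\sigma:=-\psi\,\partial_\nvec\varphi_1+g\cdot\nvec$. Dividing by $\lambda(\varphi_2)\geq\alpha_0$ and invoking standard elliptic regularity for the Neumann Laplacian on the smooth two-dimensional $\Omega$ delivers $\|\varphi\|_{H^2(\Omega)}^2\leq C\bigl(\|\varphi\|^2+\|\varphi_t\|^2+\mbox{interior remainder}+\|\sigma\|_{H^{1/2}(\partial\Omega)}^2\bigr)$, the interior remainder being tamed by Sobolev embeddings in dimension two. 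The decisive point, and the main obstacle of the whole proof, is the control of $\|\sigma\|_{H^{1/2}(\partial\Omega)}$: the product $\psi\,\partial_\nvec\varphi_1$ has no direct bound in this norm, and Lemma~\ref{trace-product} is precisely the tool to handle it, splitting the product norm into $\|\psi\|_{L^\infty(\partial\Omega)}\|\partial_\nvec\varphi_1\|_{H^{1/2}(\partial\Omega)}+\|\partial_\nvec\varphi_1\|_{L^\infty(\partial\Omega)}\|\psi\|_{H^{1/2}(\partial\Omega)}$, whose factors are controlled by the trace theorem, the $H^2$-regularity of $\varphi_1,\varphi_2$ from Theorem~\ref{reg-thm}, and \eqref{Agmon} (the latter turning $\|\varphi\|_{L^\infty}$ into $C\|\varphi\|^{1/2}\|\varphi\|_{H^2}^{1/2}$, which gives a Young-absorbable term $\delta\|\varphi\|_{H^2}^2$ on the LHS). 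Plugging the resulting $L^2(H^2)$-bound back into the differential inequality from step two, combining with the Lemma~\ref{stab-est-res1} estimate, and applying Gronwall's lemma yields \eqref{stabest2}; the strengthened hypotheses \textbf{(H1*)}--\textbf{(H2**)} are used precisely to ensure that $\lambda$, $m$, $\lambda'$ are smooth enough for the trace-product estimate and the time-integration-by-parts to close the argument.
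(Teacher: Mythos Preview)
Your overall architecture is reasonable and partly parallels the paper, but the elliptic step contains a genuine gap. When you view the difference equation as a Neumann problem for $\varphi$, the boundary datum is $\sigma=-\psi\,\partial_{\nvec}\varphi_1+g\cdot\nvec$. To apply Lemma~\ref{trace-product} to the product $\psi\,\partial_{\nvec}\varphi_1$ you need \emph{both} factors in $H^{1/2}(\partial\Omega)\cap L^\infty(\partial\Omega)$. The factor $\psi=\lambda(\varphi_2)-\lambda(\varphi_1)$ is fine (Agmon gives $\|\psi\|_{L^\infty}\le C\|\varphi\|^{1/2}\|\varphi\|_{H^2}^{1/2}$, absorbable), but $\partial_{\nvec}\varphi_1$ is \emph{not} known to lie in $L^\infty(\partial\Omega)$: the available regularity $\varphi_1\in L^\infty(0,T;H^2(\Omega))$ only yields $\nabla\varphi_1\in L^\infty(0,T;H^1(\Omega)^2)$, and in two space dimensions $H^1$ fails to embed in $L^\infty$ (equivalently, $H^{1/2}$ on the one-dimensional boundary does not embed in $L^\infty$). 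Hence the second summand in the trace-product estimate, $\|\partial_{\nvec}\varphi_1\|_{L^\infty(\partial\Omega)}\|\psi\|_{H^{1/2}(\partial\Omega)}$, is uncontrolled, and the $H^{1/2}$ bound on $\sigma$ does not close.

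This is exactly the obstruction the paper's proof is organised to avoid. Instead of $\varphi$, the paper takes $B(\varphi_2)-B(\varphi_1)$ as the elliptic unknown: its Neumann datum, read directly from \eqref{stt5}, is $(m(\varphi_2)-m(\varphi_1))(\nabla K\ast\varphi_2)\cdot\nvec+m(\varphi_1)(\nabla K\ast\varphi)\cdot\nvec$, whose factors are either uniformly bounded ($m(\varphi_i)$, $\nabla K\ast\varphi_2$) or controlled in $L^\infty$ via Agmon ($m(\varphi_2)-m(\varphi_1)$, $\nabla K\ast\varphi$); Lemma~\ref{trace-product} then applies legitimately. The $H^2$ estimate for $\varphi$ is recovered afterwards from that of $B(\varphi_2)-B(\varphi_1)$ by differentiating the relation $\partial_j\varphi=\lambda(\varphi)^{-1}\partial_jB(\varphi)$. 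Correspondingly, the energy step is done by testing with $(B(\varphi_2)-B(\varphi_1))_t$ rather than $\varphi_t$; this produces a coercive functional $\Psi\sim\|\nabla(B(\varphi_2)-B(\varphi_1))\|^2$ directly and dispenses with your time-integration-by-parts manoeuvre on the $\nabla\varphi_t$ terms. In short, the choice of $B$-variable is not cosmetic: it is what makes the boundary trace estimate go through.
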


\begin{proof}
In the following, the explicit dependence on time is omitted for simplicity.
Let us take the difference between \eqref{stt3} written for each of the two solutions, and test
the resulting equation
by $\left(B\left(\varphi_2\right)-B\left(\varphi_1\right)\right)_t$ in $H$. As in the proof of the previous lemma,
we set $\uvec:=\uvec_2-\uvec_1$ and $\varphi:=\varphi_2-\varphi_1$. On account of
\eqref{defB}, we obtain \red{almost everywhere in $(0,T)$} the identity
\begin{align}\label{diffid6}
 &\frac{1}{2}\frac{d\Psi}{dt}+\big(\lambda\left(\varphi_1\right)\varphi_t,\varphi_t\big)
 =-\big(\left(\lambda(\varphi_2)-\lambda(\varphi_1)\right)\varphi_{2,t},\varphi_t\big)\nonumber\\
 &-\big(\uvec\cdot\nabla\varphi_2,\left(\lambda(\varphi_2)-\lambda(\varphi_1)\right)\varphi_{2,t}\big)
 -\big(\uvec\cdot\nabla\varphi_2,\lambda(\varphi_1)\varphi_t\big)\nonumber\\
 &-\big(\uvec_1\cdot\nabla\varphi,\left(\lambda(\varphi_2)-\lambda(\varphi_1)\right)\varphi_{2,t}\big)
 -\big(\uvec_1\cdot\nabla\varphi,\lambda(\varphi_1)\varphi_t\big)\nonumber\\
 &-\big(\left(m'(\varphi_2)-m'(\varphi_1)\right)\varphi_{2,t}\left(\nabla K\ast\varphi_2\right),\nabla\left(B(\varphi_2)-B(\varphi_1)\right)\big)\nonumber\\
 &-\big(m'(\varphi_1)\varphi_{t}\left(\nabla K\ast\varphi_2\right),\nabla\left(B(\varphi_2)-B(\varphi_1)\right)\big)\nonumber\\
 &-\big(\left(m(\varphi_2)-m(\varphi_1)\right)\left(\nabla K\ast\varphi_{2,t}\right),\nabla\left(B(\varphi_2)-B(\varphi_1)\right)\big)\nonumber\\
 &-\big(m'(\varphi_1)\varphi_{1,t}\left(\nabla K\ast\varphi\right),\nabla\left(B(\varphi_2)-B(\varphi_1)\right)\big)\nonumber\\
 &-\big(m(\varphi_1)\left(\nabla K\ast\varphi_t\right),\nabla\left(B(\varphi_2)-B(\varphi_1)\right)\big)\,=\,\sum_{j=1}^
 {\red{10}} I^{(1)}_j,
  \end{align}
where \red{the quantities $I_j^{(1)}$, $1\le j\le 10$, have obvious meaning and} the functional $\Psi$ is defined by
\begin{align}\label{Psi}
 \Psi&:=\Vert\nabla\left(B(\varphi_2)-B(\varphi_1)\right)\Vert^2
 \,-\,2\big(\left(m(\varphi_2)-m(\varphi_1)\right)\left(\nabla K\ast\varphi_2\right),\nabla\left(B(\varphi_2)-B(\varphi_1)\right)\big)
 \nonumber\\
 &\hspace*{7mm} -2\big(m(\varphi_1)\left(\nabla K\ast\varphi\right),\nabla\left(B(\varphi_2)-B(\varphi_1)\right)\big).
\end{align}
We now estimate individually all of the terms on the right-hand side of \eqref{diffid6}. \red{To this end, we note
that the mean value theorem yields that
$$|\lambda(\varphi_2)-\lambda(\varphi_1)|\,+\,\max_{0\le k\le 1}\,|m^{(k)}(\varphi_2)
-m^{(k)}(\varphi_1)|\,\le\,C_0\,|\varphi| \quad\,\mbox{a.e. in \,$Q$},
$$
with some global constant $C_0$. Moreover, we recall the continuity of the embedding $V\subset L^4(\Omega)$,
the Gagliardo--Nirenberg inequality \eqref{GN}, and the regularity properties stated in Theorem 2. Using
H\"older's and Young's inequalities, we obtain, for every $\epsilon>0$ and $\epsilon'>0$ (which will be
specified later), the following chain of estimates:}
\begin{align}
I^{(1)}_1
&\,\leq\, C_0\,\Vert\varphi\Vert_{L^4(\Omega)}\,
\Vert\varphi_{2,t}\Vert_{L^4(\Omega)}\,\Vert\varphi_t\Vert
\,\leq\,\epsilon\,\Vert\varphi_t\Vert^2+C_\epsilon\, \Vert\varphi_{2,t}\Vert_V^2\,\Vert\varphi\Vert_V^2\,,\label{est23}
\\[1mm]
I^{(1)}_2
&\,\leq\, C_0\,\Vert\uvec\Vert_{L^4(\Omega)^2}\,\Vert\nabla\varphi_2\Vert_{L^4(\Omega)^2}\,
\Vert\varphi\Vert_{L^4(\Omega)}\,\Vert\varphi_{2,t}\Vert_{L^4(\Omega)}\nonumber\\
&\,\leq\,\epsilon'\,\Vert\nabla\uvec\Vert^2+C_{\epsilon'}\,\Vert\varphi_{2,t}\Vert_V^2\,\Vert\varphi\Vert_V^2\,,\\[1mm]
I^{(1)}_3
&\,\leq\,\red{C}\,\Vert\uvec\Vert_{L^4(\Omega)^2}\,\Vert\nabla\varphi_2\Vert_{L^4(\Omega)^2}\,\Vert\varphi_t\Vert
\,\leq\, \mathbb{Q}\,\Vert\uvec\Vert^{1/2}\,\Vert\nabla\uvec\Vert^{1/2}\,\Vert\varphi_t\Vert\nonumber\\
&\,\leq\, \epsilon\,\Vert\varphi_t\Vert^2+\epsilon'\,\Vert\nabla\uvec\Vert^2+\mathbb{Q}\,\Vert\uvec\Vert^2\,,\\[1mm]
I^{(1)}_4
&\,\leq\, C_0\,\Vert\uvec_1\Vert_{L^\infty(\Omega)^2}\,\Vert\nabla\varphi\Vert\,\Vert\varphi\Vert_{L^4(\Omega)}\,
\Vert\varphi_{2,t}\Vert_{L^4(\Omega)}
\,\leq\, C\,\Vert\uvec_1\Vert_{H^2(\Omega)^2}\,\Vert\varphi_{2,t}\Vert_{V}\,\Vert\varphi\Vert_V^2\,,\\[1mm]
I^{(1)}_5
&\,\leq\,\red{C}\,\Vert\uvec_1\Vert_{L^\infty(\Omega)^2}\,\Vert\nabla\varphi\Vert\,\Vert\varphi_t\Vert
\,\leq\, \epsilon\,\Vert\varphi_t\Vert^2+C_\epsilon\,\Vert\uvec_1\Vert_{H^2(\Omega)^2}^2\,\Vert\varphi\Vert_V^2\,,\\[1mm]
I^{(1)}_6
&\,\leq\,\Vert m'(\varphi_2)-m'(\varphi_1)\Vert_{L^4(\Omega)}\,\Vert\varphi_{2,t}\Vert_{L^4(\Omega)}\,
\Vert\nabla K\ast\varphi_2\Vert_{L^\infty(\Omega)^2}\,\Vert\nabla\left(B(\varphi_2)-B(\varphi_1)\right)\Vert
\nonumber\\
&\,\leq\, C\,\Vert\varphi_{2,t}\Vert_{V}\,\Vert\varphi\Vert_V\,
\Vert\nabla\left(B(\varphi_2)-B(\varphi_1)\right)\Vert\,,\\[1mm]
I^{(1)}_7+I^{(1)}_{10}
&\,\leq\, C\,\Vert\varphi_t\Vert\,\Vert\nabla\left(B(\varphi_2)-B(\varphi_1)\right)\Vert\nonumber\\
&\,\leq\,\epsilon\,\Vert\varphi_t\Vert^2+C_\epsilon\,\Vert\nabla\left(B(\varphi_2)-B(\varphi_1)\right)\Vert^2\,,\\[1mm]
I^{(1)}_{8}
&\,\leq\, \Vert m(\varphi_2)-m(\varphi_1)\Vert_{L^4(\Omega)}\,\Vert\nabla K\ast\varphi_{2,t}\Vert_{L^4(\Omega)^2}\,
\Vert\nabla\left(B(\varphi_2)-B(\varphi_1)\right)\Vert\nonumber\\
&\,\leq\, C\,\Vert\varphi_{2,t}\Vert_{V}\,\Vert\varphi\Vert_{V}\,\Vert\nabla\left(B(\varphi_2)-B(\varphi_1)\right)\Vert\,,
\\[1mm]
I^{(1)}_{9}
&\,\leq\, C\, \Vert\varphi_{1,t}\Vert_{L^4(\Omega)}\,\Vert\nabla K\ast\varphi\Vert_{L^4(\Omega)^2}\,\Vert\nabla\left(B( \varphi_2)-B(\varphi_1)\right)\Vert\nonumber\\
&\,\leq\, C\,\Vert\varphi_{1,t}\Vert_{V}\,\Vert\varphi\Vert_{V}\,\Vert\nabla\left(B(\varphi_2)-B(\varphi_1)\right)\Vert\,.
\label{est24}
\end{align}
Here, and in the following, $\mathbb{Q}$ stands for a function \red{having similar properties as the
function $\mathbb{Q}_3$ from the statement of the theorem.}
Inserting the estimates \eqref{est23}--\eqref{est24} in \eqref{diffid6},
and choosing $\epsilon>0$ small enough, we
obtain \red{that almost everywhere in $(0,T)$ it holds}
\begin{align}
&\frac{d\Psi}{dt}+\red{\alpha_0}\,\Vert\varphi_t\Vert^2 \,\leq\, 4\,\epsilon'\,\Vert\nabla\uvec\Vert^2
+\Lambda_2\left(\Vert\varphi\Vert_V^2+\Vert\nabla\left(B(\varphi_2)-B(\varphi_1)\right)\Vert^2\right)
+\mathbb{Q}\,\Vert\uvec\Vert^2,\label{diffid7}
\end{align}
where
\begin{align}
&\Lambda_2:= C\left(1+\Vert\uvec_1\Vert_{H^2(\Omega)^2}^2+\Vert\varphi_{1,t}\Vert_V^2+\Vert\varphi_{2,t}\Vert_V^2\right)\in L^1(0,T).
\end{align}
We now \red{aim} to control the $L^2(\Omega)$ norm of \,$\nabla\left(B(\varphi_2)-B(\varphi_1)\right)$\,
by the $H^1(\Omega)$ norm of $\varphi$ (from above and below).
Now observe that
\begin{align*}
&\nabla\left(B(\varphi_2)-B(\varphi_1)\right)=\left(\lambda(\varphi_2)-\lambda(\varphi_1)\right)\nabla\varphi_2
+\lambda(\varphi_1)\nabla\varphi\,.
\end{align*}
Hence, we deduce that
\begin{align}
\Vert\nabla\left(B(\varphi_2)-B(\varphi_1)\right)\Vert^2
&\,\geq\, \red{\alpha_0^2}\,\Vert\nabla\varphi\Vert^2-
2\,\|\lambda\|_{C^0([-1,1])}\,\Vert\lambda(\varphi_2)-\lambda(\varphi_1)\Vert_{L^4(\Omega)}\,
\Vert\nabla\varphi_2\Vert_{L^4(\Omega)^2}\,\Vert\nabla\varphi\Vert\nonumber\\
&\,\geq\, \red{\alpha_0^2}\,\Vert\nabla\varphi\Vert^2-2\red{\,C\,} C_0\,\Vert\varphi\Vert_{L^4(\Omega)}\,\Vert\nabla\varphi_2\Vert_{L^4(\Omega)^2}\,
\Vert\nabla\varphi\Vert\nonumber\\
&\,\geq\,\red{\alpha_0^2}\,\Vert\nabla\varphi\Vert^2-\mathbb{Q}\left(\Vert\varphi\Vert+\Vert\varphi\Vert^{1/2}\,\Vert\nabla\varphi\Vert^{1/2}\right)
\Vert\nabla\varphi\Vert \nonumber\\
&\,\geq\,\frac{1}{2}\,\red{\alpha_0^2}\,\Vert\nabla\varphi\Vert^2-\mathbb{Q}\,\Vert\varphi\Vert^2\,.\label{est25}
\end{align}
On the other hand, it is immediately seen that we also have
\begin{align}
&\Vert\nabla\left(B(\varphi_2)-B(\varphi_1)\right)\Vert^2\,\leq\, C\,\Vert\varphi\Vert_V^2\,.\label{est26}
\end{align}
Thanks to \eqref{est25}, \eqref{est26}, and to the definition \eqref{Psi}, we then easily \red{find that}
\begin{align}
&\red{\frac{\alpha_0^2}4}\,\Vert\nabla\varphi\Vert^2-\mathbb{Q}\,\Vert\varphi\Vert^2
\,\leq\,\Psi\,\leq\, C\,\Vert\varphi\Vert_V^2.\label{est27}
\end{align}
Adding \eqref{est19} and \eqref{diffid7}, choosing $\epsilon'$ small enough, and
employing the bound \eqref{est27}, we are thus led to the differential inequality (cf. also \eqref{bound1})
\begin{align*}
&\frac{d}{dt}\Big(\Psi+\frac{1}{2}\Vert\uvec\Vert^2\Big)+\frac{\nu_1}{8}\Vert\nabla\uvec\Vert^2+
\red{\alpha_0}\,\Vert\varphi_t\Vert^2\notag\\
&\leq \Lambda_2
\left(\Psi+\frac{1}{2}\Vert\uvec\Vert^2\right)+ (\Lambda_2 + \mathbb{Q})\Vert\varphi\Vert^2+\frac{1}{\nu_1}\Vert\vvec\Vert_{V_{div}^\prime}^2,
\end{align*}
where $\vvec:=\vvec_2-\vvec_1$.
Hence, Gronwall's lemma, \eqref{stabi1}, and \eqref{est27} yield the stability estimate (cf. also \eqref{bound1})
\begin{equation}
\Vert\uvec\Vert_{L^\infty\left(0,t;G_{div}\right)}^2+\Vert\uvec\Vert_{L^2\left(0,t;V_{div}\right)}^2
+\Vert\varphi\Vert_{L^\infty\left(0,t;V\right)}^2+\Vert\varphi_t\Vert_{L^2\left(0,t;H\right)}^2
\,\leq\,\mathbb{Q}\,\Vert\vvec\Vert_{L^2(0,T;V_{div}^\prime)}^2\,.
\label{stabest1}
\end{equation}

\vspace{2mm}
We now aim to control the $L^2(0,t;H^2(\Omega))$ norm of $\varphi$ in terms of the $L^2(0,t;H)$ norm of $\varphi_t$.
This will be achieved in three steps.\vspace{2mm}

\noindent{\itshape Step 1. Control of \,\red{ $\|\Delta\big(B\left(\varphi_2\right)-B\left(\varphi_1\right)\big)\|_{
L^2(0,t;H)}$
\,in terms of \,$\|\varphi_t\|_{L^2(0,t;H)}$.}}

\vspace{2mm}\noindent
\red{We write \eqref{stt3} for both solutions and take the difference of the equations. We then get
the identity}
\newpage
\begin{align}
\Delta\big(B\left(\varphi_2\right)-B\left(\varphi_1\right)\big)
&\,=\,\varphi_t+\uvec\cdot\nabla\varphi_2+\uvec_1\cdot\nabla\varphi
+\big(m(\varphi_2)-m(\varphi_1)\big)\,\mbox{div}(\nabla K\ast\varphi_2)\nonumber\\
&\quad\,\,+\big(\left(m'(\varphi_2)-m'(\varphi_1)\right)\nabla\varphi_2+m'(\varphi_1)\nabla\varphi\big)
\cdot(\nabla K\ast\varphi_2)\nonumber\\
&\quad\,\, +m(\varphi_1)\,\mbox{div}(\nabla K\ast\varphi)+m'(\varphi_1)\nabla\varphi_1\cdot
\left(\nabla K\ast\varphi\right).\label{eqdiff}
\end{align}
It is easy to see that \red{the $L^2(\Omega)$ norms of the fourth to last terms on the right-hand side} of \eqref{eqdiff}
can, on account of Lemma \ref{admiss} and of the bound \eqref{reg3}$_1$
for $\varphi_1,\varphi_2$, be estimated by $C\,\Vert\varphi\Vert_V$. \red{By virtue of Poincar\'e's
inequality, we therefore} get that
\begin{align}
\Vert\Delta\big(B\left(\varphi_2\right)-B\left(\varphi_1\right)\big)\Vert
&\leq
\Vert\varphi_t\Vert+\Vert\uvec\Vert_{L^4(\Omega)^2}\Vert\nabla\varphi_2\Vert_{L^4(\Omega)^2}
+\Vert\uvec_1\Vert_{L^4(\Omega)^2}\Vert\nabla\varphi\Vert_{L^4(\Omega)^2}
+C\Vert\varphi\Vert_V\nonumber\\
&\leq \Vert\varphi_t\Vert+C\Vert\nabla\uvec\Vert+C\Vert\nabla\varphi\Vert^{1/2}\Vert\varphi\Vert_{H^2(\Omega)}^{1/2}
+C\Vert\varphi\Vert_V\nonumber\\
&\leq\Vert\varphi_t\Vert+C\Vert\nabla\uvec\Vert+\delta\Vert\varphi\Vert_{H^2(\Omega)}+C_\delta\Vert\varphi\Vert_V,
\label{est33}
\end{align}
for every $\delta>0$ (to be fixed later).

\vspace{3mm}\noindent
{\itshape Step 2. Control of \red{\, $\|B\left(\varphi_2\right)-B\left(\varphi_1\right)\|_{L^2(0,t;H^2(\Omega))}$
in terms of \,$\|\Delta\big(B\left(\varphi_2\right)-B\left(\varphi_1\right)\big)\|_{L^2(0,t;H)}$}.
}\\
We need to estimate the trace of the normal derivative of $B\left(\varphi_2\right)-B\left(\varphi_1\right)$
in $H^{1/2}(\partial\Omega)$.
For this purpose, we write \eqref{stt5}$_2$  for each solution and then take the difference. From the
resulting equation, we get that
\begin{equation*}
\frac{\partial}{\partial\nvec}\big(B\left(\varphi_2\right)-B\left(\varphi_1\right)\big)
=\left(m(\varphi_2)-m(\varphi_1)\right)\left(\nabla K\ast\varphi_2\right)\cdot\nvec
+m(\varphi_1)\left(\nabla K\ast\varphi\right)\cdot\nvec\quad\mbox{ a.e. on }\Sigma\,.
\end{equation*}
By applying Lemma \ref{trace-product}, we then obtain the estimate
\begin{align}
&\Big\Vert\frac{\partial}{\partial\nvec}\big(B\left(\varphi_2\right)-B\left(\varphi_1\right)\big)
\Big\Vert_{H^{1/2}(\partial\Omega)}
\,\leq\,\Vert m(\varphi_2)-m(\varphi_1)\Vert_{L^\infty(\partial\Omega)}\,
\Vert\left(\nabla K\ast\varphi_2\right)\cdot\nvec\Vert_{H^{1/2}(\partial\Omega)}\nonumber\\
&\quad +\Vert\left(\nabla K\ast\varphi_2\right)\cdot\nvec \Vert_{L^\infty(\partial\Omega)}\,
\Vert m(\varphi_2)-m(\varphi_1)\Vert_{H^{1/2}(\partial\Omega)}\nonumber\\
&\quad +\Vert m(\varphi_1)\Vert_{L^\infty(\partial\Omega)}\,
\Vert\left(\nabla K\ast\varphi\right)\cdot\nvec\Vert_{H^{1/2}(\partial\Omega)}\,+\,
\Vert\left(\nabla K\ast\varphi\right)\cdot\nvec \Vert_{L^\infty(\partial\Omega)}\,
\Vert m(\varphi_1)\Vert_{H^{1/2}(\partial\Omega)}\nonumber\\
&\quad =: \sum_{j=1}^4 I^{(2)}_j \,,\label{est30}
\end{align}
\red{with obvious meaning of $I_j^{(2)}$, $1\le j\le 4$.}
We now proceed to estimate the four terms on the right-hand side \red{individually. To this end,
we employ Lemma 1, Agmon's inequality \eqref{Agmon}, and the classical trace theorem, where $C_{tr}$
denotes the constant of the continuous embedding $H^1(\Omega)\subset H^{1/2}(\partial\Omega)$.
We also utilize the fact that if $\psi\in H^1(\Omega)$ and $|\psi|\leq\zeta$ almost everywhere in $\Omega$
 for some positive constant $\zeta$
(with $\Omega$ smooth enough), then the trace $\gamma_0\psi:=\psi|_{\partial\Omega}\in H^{1/2}(\partial\Omega)$ of $\psi$ on the boundary $\partial\Omega$ satisfies $|\gamma_0\psi|\leq\zeta$ a.e. on $\partial\Omega$, and, moreover, if
$\blue{g}\in C^1(\mathbb{R})$, then $\blue{g}(\psi)\in H^1(\Omega)$ and $\gamma_0 \blue{g}(\psi)=\blue{g}(\gamma_0\psi)$.
With these tools at hand, we deduce, for every $\delta>0$ (to be fixed later), the  chain of estimates}
\newpage
\begin{align}
I^{(2)}_1
&\,\leq\, m_\infty '\,\Vert\varphi\Vert_{L^\infty(\Omega)}\,\Vert K\ast\varphi_2\Vert_{H^2(\Omega)}
\,\leq\, C_{m,K,\Omega}\,\Vert\varphi\Vert^{1/2}\,\Vert\varphi\Vert_{H^2(\Omega)}^{1/2}\nonumber\\
&\,\leq\,\delta\,\Vert\varphi\Vert_{H^2(\Omega)}+C_{\delta,m,K,\Omega}\,\Vert\varphi\Vert\,,\label{est28}\\[1mm]
I^{(2)}_2
&\,\leq\, \Vert\left(\nabla K\ast\varphi_2\right)\cdot\nvec \Vert_{L^\infty(\partial\Omega)}\,C_{tr}\,\Vert m(\varphi_2)-m(\varphi_1)\Vert_{V}\,\leq\, C_{m,K,\Omega}\,\Vert\varphi\Vert_V\,,\\[1mm]
I^{(2)}_3
&\,\leq\, m_\infty\,\Vert K\ast\varphi\Vert_{H^2(\Omega)}\,\leq \,C_{m,K,\Omega}\,\Vert\varphi\Vert,\\[1mm]
I^{(2)}_4
&\,\leq\,\Vert\nabla K\ast\varphi\Vert_{L^\infty(\Omega)^2}\,C_{tr}\,\Vert m(\varphi_1)\Vert_V
\,\le\, C_{m,K,\Omega}\,\Vert\varphi\Vert_{L^\infty(\Omega)}\nonumber\\
&\,\leq\,\delta\,\Vert\varphi\Vert_{H^2(\Omega)}+C_{\delta,m,K,\Omega}\,\Vert\varphi\Vert\,.\label{est29}
\end{align}
Inserting the estimates \eqref{est28}--\eqref{est29} in \eqref{est30},
and invoking \eqref{est26}, we deduce that
\begin{align}
&\Vert B(\varphi_2)-B(\varphi_1)\Vert_{H^2(\Omega)}
\,\leq\, C\,\Vert\Delta\left(B(\varphi_2)-B(\varphi_1)\right)\Vert+C\,\delta\,\Vert\varphi\Vert_{H^2(\Omega)}+
C_\delta\,\Vert\varphi\Vert_V \red{\quad\mbox{a.e. in \,$(0,T)$}}.
\label{est34}
\end{align}
{\itshape Step 3. Control of \,\red{$\|\varphi\|_{L^2(0,t;H^2(\Omega))}$\, in terms of\,
$\|B(\varphi_2)-B(\varphi_1)\|_{L^2(0,t;H^2(\Omega))}$}.}

\vspace{2mm}\noindent
We write the identity (cf. \eqref{defB}) \,$\partial_j\varphi=\lambda^{-1}\partial_j B\red{(\varphi)}$, $j=1,2$,
for the two solutions and take the difference. For the second spatial derivatives $\partial^2_{ij}\varphi$, we get
\begin{align}
\partial_{ij}^2\varphi&=\frac{1}{\lambda(\varphi_1)}\partial_{ij}^2\big(B(\varphi_2)-B(\varphi_1)\big)
+\Big(\frac{1}{\lambda(\varphi_2)}-\frac{1}{\lambda(\varphi_1)}\Big)\partial_{ij}^2 B(\varphi_2)\nonumber\\
&\quad -\Big(\frac{1}{\lambda^2(\varphi_2)}-\frac{1}{\lambda^2(\varphi_1)}\Big)\partial_i\lambda(\varphi_2)\partial_j B(\varphi_2)
-\frac{1}{\lambda^2(\varphi_1)}\big(\partial_i\lambda(\varphi_2)-\partial_i\lambda(\varphi_1)\big)\partial_j B(\varphi_2)
\nonumber\\
&\quad -\frac{1}{\lambda^2(\varphi_1)}\partial_i\lambda(\varphi_1)\big(\partial_j B(\varphi_2)-\partial_j B(\varphi_1)\big)
-\frac{1}{\lambda(\varphi_1)}\big( m(\varphi_2)-m(\varphi_1)\big)\partial_i\varphi_2\,.\label{id4}
\end{align}
Let us denote by $I^{(3)}_j$, $j
=1,\dots,6$, the $L^2(\Omega)$ norms of the six terms on the right-hand side of the above identity.
\red{Now observe that \eqref{reg3} implies that $\partial_i\varphi_1,\partial_i\varphi_2
\in L^{\infty}(0,T;L^p(\Omega))$ \,for $i=1,2$ and all \,$p\in [1,+\infty)$. We can therefore infer
from ${\bf (H2^{**})}$, \eqref{GN}, and Young's inequality the estimate}
\begin{align}
\Vert\partial_i\lambda(\varphi_2)-\partial_i\lambda(\varphi_1)\Vert_{L^4(\Omega)}
&\leq C\Vert\varphi\Vert_V+C\Vert\nabla\varphi\Vert_{L^4(\Omega)^2}\nonumber\\
&\leq C\Vert\varphi\Vert_V+C\Vert\nabla\varphi\Vert^{1/2}\Vert\varphi\Vert_{H^2(\Omega)}^{1/2}\nonumber\\
&\leq \eta\Vert\varphi\Vert_{H^2(\Omega)}+C_\eta\Vert\varphi\Vert_V,
\end{align}
for any  $\eta>0$ (to be chosen later). The terms $I^{(3)}_k$ can be estimated in the following way:
\begin{align}
I^{(3)}_1&\,\leq\,\frac{1}{\red{\alpha_0}}\,\Vert\partial_{ij}^2\big(B(\varphi_2)-B(\varphi_1)\big)\Vert\,,\label{est31}
\\[1mm]
I^{(3)}_2&\,\leq \,C\,\Vert\varphi\Vert_{L^\infty(\Omega)}\,\Vert\partial_{ij}B(\varphi_2)\Vert
\,\leq\, C\,\Vert\varphi\Vert^{1/2}\,\Vert\varphi\Vert_{H^2(\Omega)}^{1/2}\nonumber\\
&\,\leq\,\eta\,\Vert\varphi\Vert_{H^2(\Omega)}+C_\eta\,\Vert\varphi\Vert\,,\\[1mm]
I^{(3)}_3&\,\leq\, C\,\Vert\varphi\Vert_{L^\infty(\Omega)}\,\Vert\partial_i\lambda(\varphi_2)\Vert_{L^4(\Omega)}\,
\Vert\partial_j B(\varphi_2)\Vert_{L^4(\Omega)}\nonumber\\
&\,\leq C\,\Vert\varphi\Vert^{1/2}\,\Vert\varphi\Vert_{H^2(\Omega)}^{1/2}
\,\leq\,\eta\,\Vert\varphi\Vert_{H^2(\Omega)}+C_\eta\Vert\varphi\Vert\,,\\[1mm]
I^{(3)}_4&\,\leq\, C\,\Vert\partial_i\lambda(\varphi_2)-\partial_i\lambda(\varphi_1)\Vert_{L^4(\Omega)}\,
\Vert\partial_j B(\varphi_2)\Vert_{L^4(\Omega)}\nonumber\\
&\,\leq\, C\,\eta\,\Vert\varphi\Vert_{H^2(\Omega)}+C_\eta\,\Vert\varphi\Vert_V\,,\\[1mm]
I^{(3)}_5&\,\leq \,C\,\Vert\partial_i\lambda(\varphi_1)\Vert_{L^4(\Omega)}\,
\Vert\partial_j B(\varphi_2)-\partial_j B(\varphi_1)\Vert_{L^4(\Omega)}\nonumber\\
&\,\leq\, C\,\Vert B(\varphi_2)- B(\varphi_1)\Vert_{H^2(\Omega)}\,,\\[1mm]
I^{(3)}_6&\,\leq\, C\,\Vert\varphi\Vert_V\,.\label{est32}
\end{align}
\red{Here, we have used Agmon's inequality \eqref{Agmon}, as well as} the fact that
\,$B(\varphi_2)\in L^\infty(0,T;H^2(\Omega))$\,
and \,$\lambda(\varphi_j)\in L^\infty(0,T;W^{1,p}(\Omega))$, for all $1\le p<+\infty$, $j=1,2$.
By means of the estimates \eqref{est31}--\eqref{est32}, and taking $\eta>0$ small enough, we deduce from \eqref{id4} that
\begin{align}
&\Vert\varphi\Vert_{H^2(\Omega)}\,\leq\, C
\,\Vert B(\cdot,\varphi_2)- B(\cdot,\varphi_1)\Vert_{H^2(\Omega)}+C\,\Vert\varphi\Vert\,.
\label{est35}
\end{align}
Now, combining the estimates \eqref{est33}, \eqref{est34}, \eqref{est35} obtained in the three steps above,
and fixing $\delta>0$ small enough, we finally deduce the desired control
\begin{align}
&\Vert\varphi\Vert_{H^2(\Omega)}\,\leq\, C\,\Vert\varphi_t\Vert+C\,\Vert\nabla\uvec\Vert+C\,\Vert\varphi\Vert_V\,.
\label{est36}
\end{align}
The stability estimate \eqref{stabest2} now immediately follows from \eqref{stabest1} and \eqref{est36}.
This concludes the proof of the lemma.
\end{proof}

\section{Optimal control}
\label{optimal control}
\setcounter{equation}{0}

We now study the optimal control problem \textbf{(CP)}. Throughout this section, we assume that the cost functional $\,{\cal J}\,$ is given by
(\ref{costfunct}).  Moreover, we assume that the set of admissible controls $\mathcal{V}_{ad}$ is \blue{defined} by
\begin{align}
\label{Vad}
&\mathcal{V}_{ad}:=\big\{\vvec\in L^2(0,T;G_{div}):\:\: v_{a,i}(x,t)\leq v_i(x,t)\leq v_{b,i}(x,t),\:\:\mbox{a.e. }(x,t)\in Q,\:\: i=1,2\big\},
\end{align}
with given functions $\vvec_a,\vvec_b\in L^2(0,T;G_{div})\cap L^\infty(Q)^2$.
Notice that the stability estimate provided by Lemma \ref{stablem2} yields that the control-to-state map $\,{\cal S}\,$
introduced above (cf. \eqref{controlmap1}, \eqref{controlmap2}) is locally
Lipschitz continuous from ${\cal V}$ into the space
\begin{align}
&
\mathcal{W}:=\big[ { C^0([0,T];G_{div})}
\cap L^2(0,T;V_{div})\big]\times\big[H^1(0,T;H)\cap C^0([0,T];V)\cap L^2(0,T;H^2(\Omega))\big].
\end{align}
We also point out that problem \textbf{(CP)} can be formulated \red{in the form}
\begin{align}
&\min_{\vvec\in\mathcal{V}_{ad}} f(\vvec),\nonumber
\end{align}
for the reduced cost functional defined by $f(\vvec):=\mathcal{J}\big({\cal S}(\vvec),\vvec\big)$, for every $\vvec\in\mathcal{V}$.

Let us first prove that an optimal control exists.
\begin{thm}
Let the assumptions of Lemma \ref{stablem2} hold true. Then the optimal control problem {\bf (CP)} on $\mathcal{V}_{ad}$ admits a solution.
\end{thm}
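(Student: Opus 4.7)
The plan is to employ the direct method of the calculus of variations. Since the cost functional $\mathcal{J}$ is nonnegative, the infimum $\alpha:=\inf_{\vvec\in\mathcal{V}_{ad}}f(\vvec)$ is finite, and we can select a minimizing sequence $\{\vvec_n\}\subset\mathcal{V}_{ad}$ with $f(\vvec_n)\to\alpha$. Set $[\uvec_n,\varphi_n]:=\mathcal{S}(\vvec_n)$. Observe that $\mathcal{V}_{ad}$ is a bounded, closed, and convex subset of $\mathcal{V}=L^2(0,T;G_{div})$: indeed, boundedness follows from $\vvec_a,\vvec_b\in L^\infty(Q)^2$, while convexity and closedness are immediate from the pointwise nature of the constraints. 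Hence, up to a (non-relabeled) subsequence, we have $\vvec_n\rightharpoonup \vvec^*$ weakly in $\mathcal{V}$, and by the Hahn--Banach theorem (or Mazur's lemma) $\vvec^*\in\mathcal{V}_{ad}$.

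Next, I would exploit the a priori bound \eqref{bound1} from Theorem \ref{reg-thm} together with the uniform boundedness of $\|\vvec_n\|_{L^2(0,T;G_{div})}$ to produce, along a further subsequence, convergences of the form
\begin{align*}
&\uvec_n\rightharpoonup^* \uvec^* \quad\mbox{weakly-}*\mbox{ in }\,L^\infty(0,T;V_{div}),\qquad \uvec_n\rightharpoonup \uvec^* \quad\mbox{weakly in }\,L^2(0,T;H^2(\Omega)^2),\\
&\partial_t\uvec_n\rightharpoonup \partial_t\uvec^* \quad\mbox{weakly in }\,L^2(0,T;G_{div}),\\
&\varphi_n\rightharpoonup^*\varphi^*\quad\mbox{weakly-}*\mbox{ in }\,L^\infty(0,T;H^2(\Omega)),\qquad \partial_t\varphi_n\rightharpoonup \partial_t\varphi^*\quad\mbox{weakly in }\,L^2(0,T;V).
\end{align*}
Invoking the Aubin--Lions compactness lemma, I can upgrade these to strong convergences $\uvec_n\to\uvec^*$ in $C^0([0,T];G_{div})\cap L^2(0,T;V_{div})$ and $\varphi_n\to\varphi^*$ in $C^0([0,T];V)\cap L^2(0,T;H^2(\Omega))$, with in particular $\varphi_n\to\varphi^*$ uniformly on $\overline Q$ (using the compact embedding $H^2(\Omega)\hookrightarrow C^0(\overline\Omega)$ and the Arzelà--Ascoli theorem, or a direct interpolation argument). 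These strong convergences are crucial to pass to the limit in all of the nonlinear terms in \eqref{stt2}--\eqref{stt3}: the nonlinear viscous term $2\,\mathrm{div}(\nu(\varphi_n)D\uvec_n)$, the convective term $(\uvec_n\cdot\nabla)\uvec_n$, the capillarity force $(K*\varphi_n)\nabla\varphi_n$, the degenerate diffusion $\Delta B(\varphi_n)$, and $\mathrm{div}(m(\varphi_n)(\nabla K*\varphi_n))$, together with the boundary condition \eqref{stt5}$_2$ and the initial data. In each case, one passes to the limit by combining the uniform convergence of $\varphi_n$ (hence of $\nu(\varphi_n)$, $m(\varphi_n)$, $B'(\varphi_n)$) with the weak convergence of the derivatives of $\uvec_n$ and $\varphi_n$.

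The main obstacle will be ensuring that the limit $[\uvec^*,\varphi^*]$ is precisely the strong solution $\mathcal{S}(\vvec^*)$ associated with the weak limit of the controls. This is where the uniqueness statement of Theorem \ref{reg-thm} plays a decisive role: once the limit pair is shown to satisfy the state system \eqref{stt1}--\eqref{stt5} in the sense of Definition \ref{strongsol} with datum $\vvec^*$, it must coincide with $\mathcal{S}(\vvec^*)$, so that the whole sequence (not just a subsequence) has this limit in the relevant topologies.

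Finally, the weak and strong convergences established above, combined with the fact that each summand of $\mathcal{J}$ is a convex continuous functional of its arguments (hence weakly lower semicontinuous), yield
\begin{equation*}
\mathcal{J}(\mathcal{S}(\vvec^*),\vvec^*)\,\le\,\liminf_{n\to\infty}\mathcal{J}([\uvec_n,\varphi_n],\vvec_n)\,=\,\alpha\,.
\end{equation*}
Since $\vvec^*\in\mathcal{V}_{ad}$, this inequality is in fact an equality, so that $\vvec^*$ is an optimal control, completing the proof.
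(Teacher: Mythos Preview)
Your proof is correct and follows essentially the same direct-method approach as the paper: extract a weakly convergent minimizing subsequence in $\mathcal{V}_{ad}$, use the uniform bound \eqref{bound1} plus Aubin--Lions to obtain enough strong compactness on the states (in particular $\varphi_n\to\varphi^*$ in $C^0(\overline Q)$) to pass to the limit in the nonlinear state equations, identify the limit as $\mathcal{S}(\vvec^*)$ via uniqueness, and conclude by weak lower semicontinuity of $\mathcal{J}$. One small overstatement: Aubin--Lions from the available bounds ($\varphi_n$ bounded in $L^\infty(0,T;H^2(\Omega))\cap H^1(0,T;V)$) does \emph{not} yield strong convergence in $L^2(0,T;H^2(\Omega))$, only in $C^0([0,T];H^r(\Omega))$ for $r<2$ --- but this is harmless, since your actual limit passage uses only the uniform convergence of $\varphi_n$ combined with weak convergence of the derivatives, exactly as in the paper.
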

\begin{proof}
In the first part of the proof, we can argue as in \cite[Proof of Theorem 2]{FRS}. We pick a minimizing sequence $\{\vvec_n\}\subset\mathcal{V}_{ad}$ for \textbf{(CP)}, and since
${\cal V}_{ad}$ is
bounded in ${\cal V}$, we may assume without loss of generality that
$\vvec_n\to\overline{\vvec}$ weakly in $L^2(0,T;G_{div})$
for some $\overline{\vvec}\in {\cal V}$. Since $\mathcal{V}_{ad}$ is convex and closed in $\mathcal{V}$,
and thus weakly sequentially closed, we have that $\overline{\vvec}\in\mathcal{V}_{ad}$.

Moreover, ${\cal S}$ is a locally bounded mapping from ${\cal V}$ into ${\cal H}$. Hence,
setting $\,[\uvec_n,\varphi_n]:={\cal S}(\vvec_n)$,
$n\in \mathbb{N}$, we may without loss of generality assume that, with appropriate limit points $[\overline{\uvec},\overline{\varphi}]$,
\begin{align}
&
\uvec_n\to\overline{\uvec}\,\quad\mbox{weakly$^\ast$ in $L^\infty(0,T;V_{div})$,\, weakly in $H^1(0,T;G_{div})\cap L^2(0,T;H^2(\Omega)^2)$},
\label{wconv1}\\
&\varphi_n\to\overline{\varphi}\,\quad\mbox{weakly$^\ast$ in $L^\infty(0,T;H^2(\Omega))\cap W^{1,\infty}(0,T;H)$,\, weakly in $H^1(0,T;V)$}.
\label{wconv2}
\end{align}
In particular, it follows from the compactness of the embedding $H^1(0,T;V)\cap L^\infty(0,T;H^2(\Omega))\linebreak
\subset C^0([0,T];{H^r(\Omega))}$ for $0\le r<2$, given by the Aubin-Lions lemma (cf. \cite{Lions}), that $\,\varphi_n\to \overline{\varphi}$ strongly in $C^0(\overline{Q})$. Hence, we have $\nu(\varphi_n)\to \nu(\overline{\varphi})$ strongly in $C^0(\overline{Q})$.
Moreover, we also have, by compact embedding, that
$\uvec_{n}\to\overline{\uvec}$ strongly in $L^2(0,T;G_{div})$.
By employing these weak and strong convergence properties, we can now pass to the limit in the weak formulation of
\red{the state system}
\eqref{sy3}--\eqref{sy6} (cf. Definition \ref{wfdef})
to see that $[\overline{\uvec},\overline{\varphi}]$ satisfies the weak formulation corresponding to $\overline{\vvec}$.
Notice that, instead of passing to the limit in the weak formulation of the nonlocal Cahn--Hilliard equation \eqref{sy1}
given in Definition \ref{wfdef}, we can alternatively pass to the limit in the weak formulation of
\eqref{stt3}, which reads
\begin{align}
&\langle\varphi_t,\psi\rangle\red{_V}+\left(\nabla B(\cdot,\varphi),\nabla\psi\right)
-\left(m(\varphi)\left(\nabla K\ast\varphi\right),\nabla\psi\right)=\left(\uvec\varphi,\nabla\psi\right)
\end{align}
for every $\psi\in V$ \blue{and almost any $t\in(0,,T)$}.
Hence, we have
$[\overline{\uvec},\overline{\varphi}]={\cal S}(\overline{\vvec})$, that is, the pair $([\overline{\uvec},\overline{\varphi}],
\overline{\vvec})$
is admissible for \textbf{(CP)}.
Finally, thanks to the weak sequential lower semicontinuity of $\mathcal{J}$ and to the weak convergences
\eqref{wconv1}, \eqref{wconv2}, we infer that
the state $[\overline{\uvec},\overline{\varphi}]={\cal S}(\overline{\vvec})$
is a solution to \textbf{(CP)}.
\end{proof}

\noindent
\textbf{The linearized system}.  Assume that the assumptions of Lemma \ref{stablem2} are fulfilled.
We fix a control $\overline{\vvec}\in\mathcal{V}$ and let
$[\overline{\uvec},\overline{\varphi}]:=\mathcal{S}(\overline{\vvec})\in\mathcal{H}$ be the
associated unique strong solution to the state system \eqref{sy3}--\red{\eqref{sy6}} according to Theorem \ref{reg-thm}.
Let an arbitrary $\hvec\in\mathcal{V}$ be given. In order to prove Fr\'echet differentiability of
the control-to-state operator at $\overline{\vvec}$,
we first consider the following system, which is obtained by linearizing the state system \eqref{sy3}--\red{\eqref{sy6}}
at $[\overline{\uvec},\overline{\varphi}]$:
\begin{align}
&\xivec_t-2\,\mbox{div}\big(\nu(\overline{\varphi})D\xivec\big)-2\,\mbox{div}\big(\nu^{\,\prime}(\overline{\varphi})\,
\eta\, D\overline{\uvec}\big)
+\left(\overline{\uvec}\cdot\nabla\right)\xivec+\left(\xivec\cdot\nabla\right)\overline{\uvec}+\nabla\pi^\ast\nonumber\\
&=\,\eta\left(\nabla K\ast\overline{\varphi}\right)+\overline{\varphi}\left(\nabla K\ast\eta\right)+\hvec\,
\quad\mbox{ in }Q,\label{linsy1}\\
&\eta_t+\overline{\uvec}\cdot\nabla\eta\,=\,-\xivec\cdot\nabla\overline{\varphi}+\mbox{div}\big(\lambda(\overline{\varphi})\nabla\eta\big)
-\mbox{div}\big(m'(\overline{\varphi})\eta\nabla\left(K\ast\overline{\varphi}\right)\big)\nonumber\\
&-\mbox{div}\big(m(\overline{\varphi})\left(\nabla K\ast\eta\right)\big)
+\mbox{div}\big(\eta\lambda^\prime(\overline{\varphi})\nabla\overline{\varphi}\big)\qquad\mbox{in }Q,\label{linsy2}\\
&\mbox{div}(\xivec)=0,\quad\mbox{ in }Q,\\
&\xivec=\mathbf{0}\,\quad\mbox{ on }\Sigma,\\
&\big[\lambda(\overline{\varphi})\nabla\eta-m'(\overline{\varphi})\eta\nabla\big(K\ast\overline{\varphi}\big)
-m(\overline{\varphi})\big(\nabla K\ast\eta\big)+\eta\lambda'(\overline{\varphi})\nabla\overline{\varphi}\big]\cdot\nvec=0\,
\quad\mbox{on }\Sigma,\label{linsybc}\\
&\xivec(0)=\mathbf{0},\qquad\eta(0)=0\,\quad\mbox{ in }\Omega.\label{linics}
\end{align}

We first prove that system \eqref{linsy1}--\eqref{linics} has a unique weak solution.
\begin{prop}\label{linthm}
Let the assumptions of Lemma \ref{stablem2} be satisfied.
Then problem \eqref{linsy1}--\eqref{linics}
has for every $\hvec\in\mathcal{V}$
a unique weak solution $\,[\xivec,\eta]\,$ such that
\begin{align}
&\xivec\in
H^1(0,T;V_{div}^\prime)
\cap C^0([0,T];G_{div})\cap L^2(0,T;V_{div}),
\nonumber\\
&\eta\in H^1(0,T;V')\cap C^0([0,T];H)\cap L^2(0,T;V).\label{reglin}
\end{align}
\red{Moreover, there is some constant $K_1^*>0$, which depends only on the data of the state system,
such that, for every $t\in (0,T]$,}
\begin{align}\label{stabulin}\red{
\|\xivec\|_{H^1(0,t;V_{div}')\cap C^0([0,t];G_{div})\cap L^2(0,t;V_{div})}+\|\eta\|_{H^1(0,t;V')
\cap C^0([0,t];H)\cap L^2(0,t;V)}\,\le\,K_1^*\,\|\hvec\|_{\cal V}\,.}
\end{align}
\end{prop}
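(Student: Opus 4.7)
The plan is to construct the solution by a Faedo--Galerkin scheme and then to derive a single energy estimate that yields both existence and the stability bound \eqref{stabulin}; uniqueness is then immediate from linearity. As Galerkin basis I would use the Stokes eigenfunctions $\{\wvec_j\}$ for $\xivec$ and the eigenfunctions $\{\psi_j\}$ of the Neumann Laplacian (or any smooth orthonormal basis of $V$) for $\eta$. Plugging the Galerkin ansatz into \eqref{linsy1}--\eqref{linics} yields a linear system of ODEs with $L^2$-in-time coefficients whose coefficients are bounded (via Theorem \ref{reg-thm} applied to $[\overline{\uvec},\overline{\varphi}]$), so Carath\'eodory theory grants existence of global approximations on $[0,T]$.

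The core step is the a~priori estimate. I would test the discrete analogue of \eqref{linsy1} by $\xivec$ in $G_{div}$ and of \eqref{linsy2} by $\eta$ in $H$, and add the resulting identities. By the assumption \textbf{(V)} and \textbf{(H5)} I get on the left the coercive contributions $\tfrac{1}{2}\tfrac{d}{dt}(\|\xivec\|^2+\|\eta\|^2)+2\nu_1\|D\xivec\|^2+\alpha_0\|\nabla\eta\|^2$. On the right-hand side the terms fall into three groups:
\begin{itemize}
\item[(i)] convective terms $b(\overline{\uvec},\xivec,\xivec)=0$ and $b(\xivec,\overline{\uvec},\xivec)$, the latter controlled by $\widehat C_1\|\xivec\|\|\nabla\xivec\|\|\nabla\overline{\uvec}\|$, and $(\overline{\uvec}\cdot\nabla\eta,\eta)=0$, $(\xivec\cdot\nabla\overline{\varphi},\eta)$, controlled via $\|\nabla\overline{\varphi}\|_{L^4(\Omega)}\in L^\infty(0,T)$ (by \eqref{reg3});
\item[(ii)] the coupling $2(\nu'(\overline{\varphi})\,\eta\, D\overline{\uvec},D\xivec)$, which by H\"older, \eqref{GN}, and Young's inequality is bounded by $\tfrac{\nu_1}{2}\|D\xivec\|^2+\tfrac{\alpha_0}{4}\|\nabla\eta\|^2+C\,(1+\|D\overline{\uvec}\|_{L^4(\Omega)^{2\times 2}}^4)\,\|\eta\|^2$, where $\|D\overline{\uvec}\|_{L^4}^4\in L^1(0,T)$ follows by interpolation between $L^\infty(0,T;V_{div})$ and $L^2(0,T;H^2)$;
\item[(iii)] all convolution terms with $\nabla K\ast\,\cdot\,$ and the drift contributions $\mbox{div}(m'(\overline{\varphi})\eta\nabla(K\ast\overline{\varphi}))$, $\mbox{div}(\eta\lambda'(\overline{\varphi})\nabla\overline{\varphi})$, etc., which after integration by parts reduce to products of bounded factors (by \textbf{(K)}, Lemma \ref{admiss} and \eqref{reg3}) with $\|\eta\|$, $\|\nabla\eta\|$, and $\|\xivec\|$; these are absorbed analogously.
\end{itemize}
Collecting, I obtain the differential inequality
\begin{equation*}
\frac{d}{dt}\bigl(\|\xivec\|^2+\|\eta\|^2\bigr)+\nu_1\|\nabla\xivec\|^2+\alpha_0\|\nabla\eta\|^2\,\le\,\Lambda(t)\bigl(\|\xivec\|^2+\|\eta\|^2\bigr)+C\,\|\hvec\|_{V_{div}'}^2\,,
\end{equation*}
with $\Lambda\in L^1(0,T)$ depending only on the norms of $[\overline{\uvec},\overline{\varphi}]$ given by \eqref{bound1}. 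Gronwall's lemma and the zero initial data \eqref{linics} then deliver a uniform bound in $L^\infty(0,T;G_{div}\times H)\cap L^2(0,T;V_{div}\times V)$ proportional to $\|\hvec\|_{\cal V}\ge\|\hvec\|_{L^2(0,T;V_{div}')}$.

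To recover the time-derivative regularity I would, for any $\wvec\in V_{div}$ with $\|\wvec\|_{V_{div}}\le 1$, test \eqref{linsy1} by $\wvec$ and estimate each term using the already established bounds and the convective inequality; this yields $\xivec_t\in L^2(0,T;V_{div}')$ with the correct bound. The same procedure applied to \eqref{linsy2} with arbitrary $\psi\in V$ produces $\eta_t\in L^2(0,T;V')$. Passing to the limit in the Galerkin scheme via weak/weak-$\ast$ compactness and the Aubin--Lions lemma preserves all linear terms (the coefficients $\nu(\overline{\varphi})$, $\lambda(\overline{\varphi})$, $m(\overline{\varphi})$, $m'(\overline{\varphi})$, $\nu'(\overline{\varphi})$, $\lambda'(\overline{\varphi})$ are fixed $L^\infty$ functions) and furnishes a weak solution with the regularity \eqref{reglin} and the estimate \eqref{stabulin}. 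Uniqueness follows by taking the difference of two solutions corresponding to the same $\hvec$ and applying the same energy estimate: the right-hand side vanishes, so Gronwall forces both components to be zero.

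The main obstacle is the cross term $2(\nu'(\overline{\varphi})\,\eta\, D\overline{\uvec},D\xivec)$ in group~(ii): it is the only contribution in which the regularity of $\overline{\uvec}$ must be played off against the product $\eta D\xivec$, and its treatment hinges precisely on the strong-solution bound \eqref{bound1} (so that $D\overline{\uvec}\in L^4(0,T;L^4(\Omega)^{2\times 2})$ by interpolation in 2D) together with \eqref{GN}; every other term is either antisymmetric or immediately dominated by the coercive quantities and an $L^1$-in-time weight.
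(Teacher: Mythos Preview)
Your proposal is correct and follows essentially the same route as the paper: a Faedo--Galerkin scheme with the Stokes and Neumann eigenbases, the single energy identity obtained by testing with $(\xivec_n,\eta_n)$, term-by-term estimates absorbing into the coercive parts, Gronwall, comparison for the time derivatives, and weak passage to the limit. The only cosmetic difference is in the handling of the cross term $2(\nu'(\overline{\varphi})\,\eta\,D\overline{\uvec},D\xivec)$: you interpolate to obtain $D\overline{\uvec}\in L^4(0,T;L^4(\Omega)^{2\times2})$, whereas the paper estimates it directly via $\|D\overline{\uvec}\|_{L^4(\Omega)^{2\times2}}\le C\|\overline{\uvec}\|_{H^2(\Omega)^2}$ and uses $\|\overline{\uvec}\|_{H^2}^2\in L^1(0,T)$ as the Gronwall weight; both are equivalent in 2D.
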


\begin{proof}
We make use of a Faedo--Galerkin approximating scheme. Following the lines of \cite{CFG},
we introduce the family $\{\wvec_j\}_{j\in\mathbb{N}}$ of the eigenfunctions to the Stokes operator $S$ as a
Galerkin basis in $V_{div}$
and the family $\{\psi_j\}_{j\in\mathbb{N}}$ of the eigenfunctions to
 the Neumann operator $A:=-\Delta+I$ as a Galerkin basis in $V$.
 Both these eigenfunction families $\{\wvec_j\}_{j\in\mathbb{N}}$ and
$\{\psi_j\}_{j\in\mathbb{N}}$ are assumed to be suitably ordered and normalized.
Moreover, recall that, since $\wvec_j\in D(S)$, we have ${\rm div}(\wvec_j)=0$.

Then we look for two functions of the form
\begin{align}
&\xivec_n(t):=\sum_{j=1}^n a^{(n)}_j(t)\wvec_j\,,\qquad\eta_n(t):=\sum_{j=1}^n
b^{(n)}_j(t) \psi_j\,,\nonumber
\end{align}
that solve the following approximating problem:
\begin{align}
&\langle \partial_t\xivec_n(t),\wvec_i\rangle_{V_{div}}\,+\,2\,\big(\nu(\overline{\varphi}(t))
\,D\xivec_n(t),D\wvec_i\big)
\,+\,
2\,\big(\nu'(\overline{\varphi}(t))\,\eta_n(t)\, D\overline{\uvec}(t),D\wvec_i\big)
\nonumber\\
&+\,b(\overline{\uvec}(t),\xivec_n(t),\wvec_i)\,+\,b(\xivec_n(t),\overline{\uvec}(t),\wvec_i)\nonumber\\
&\,=\,\big(\eta_n(t)(\nabla K\ast\overline{\varphi})(t),\wvec_i\big)
\,+\,\big(\overline{\varphi}(t)(\nabla K\ast\eta_n)(t),\wvec_i\big)\,+\,
(\hvec(t),\wvec_i)
\,,\label{FaGa1}\\[1mm]
&\langle \partial_t\eta_{n}(t),\psi_i\rangle_V
\,=\,-\big(\lambda(\overline{\varphi}(t))\nabla\eta_n(t),\nabla\psi_i\big)\,+\,
\big(m'(\overline{\varphi}(t))\eta_n(t)\nabla\left(K\ast\overline{\varphi}\right)(t),\nabla\psi_i\big)\nonumber\\
&\,+\,\big(m(\overline{\varphi}(t))\left(\nabla K\ast\eta_n\right)(t),\nabla\psi_i\big)
\,-\,\big(\eta_n(t)\lambda^\prime(\overline{\varphi}(t))\nabla\overline{\varphi}(t),\nabla\psi_i\big)
+(\overline{\uvec}(t)\,\eta_n(t),\nabla\psi_i)\nonumber\\
&
+(\xivec_n(t)\, \overline{\varphi}(t),\nabla\psi_i),\label{FaGa2}\\
&\xivec_n(0)= \mathbf{0},
\,\quad\eta_n(0)=0,\label{FaGa3}
\end{align}
for $i=1,\dots,n$, and for almost every $t\in (0,T)$. It is \red{immediately seen} that the above system
can be reduced to a Cauchy problem for a system of $2n$ linear ordinary differential equations in the $2n$
unknowns $a^{(n)}_i$, $b^{(n)}_i$,
in which, owing to the regularity properties of $[\overline{\uvec},\overline{\varphi}]$, all of
the coefficient functions belong to $\,L^2(0,T)$.
Thanks to Carath\'{e}odory's theorem,
we can conclude that this problem enjoys
a unique solution $\boldsymbol{a}^{(n)}:=(a^{(n)}_1,\cdots,a^{(n)}_n)^{\mathbf{t}}$,
$\boldsymbol{b}^{(n)}:=(b^{(n)}_1,\cdots,b^{(n)}_n)^{\mathbf{t}}$,
such that $\boldsymbol{a}^{(n)},\boldsymbol{b}^{(n)}\in H^1(0,T;\mathbb{R}^n)$, which
then specifies $[\xivec_n,\eta_n]$.

We now derive a priori estimates for $\xivec_n$ and $\eta_n$ that are uniform in $n\in\mathbb{N}$.
To begin with, let us
multiply \eqref{FaGa1} by $a^{(n)}_i$, \eqref{FaGa2} by $b^{(n)}_i$,
sum over $i=1,\cdots,n$, and add the resulting identities.  Omitting the argument $t$ for the sake of a shorter exposition,
we then obtain, almost
everywhere in $(0,T)$,
\begin{align}
&\frac{1}{2}\,\frac{d}{dt}\,\big(\Vert\xivec_n\Vert^2+\Vert\eta_n\Vert^2\big)\,+\,
2\,\big(\nu(\overline{\varphi})\,D\xivec_n,D\xivec_n\big)\,+\,
\big(\lambda(\overline{\varphi})\nabla\eta_n,\nabla\eta_n\big)=\,-b(\xivec_n,\overline{\uvec},\xivec_n) \nonumber
\\
&\,-\,2\,\big(\nu^{\,\prime}(\overline{\varphi})\,\eta_n\,
D\overline{\uvec},D\xivec_n\big)\,+\,\big(\eta_n(\nabla K\ast\overline{\varphi}),\xivec_n\big)
\,+\,\big(\overline{\varphi}\,(\nabla K\ast\eta_n),\xivec_n\big)\nonumber\\
&\,+\,
(\hvec,\xivec_n)\,+\,
\big(m'(\overline{\varphi})\eta_n\nabla\left(K\ast\overline{\varphi}\right),\nabla\eta_n\big)
\,+\,\big(m(\overline{\varphi})\left(\nabla K\ast\eta_n\right),\nabla\eta_n\big)\nonumber\\
&\,-\,\big(\eta_n\lambda^\prime(\overline{\varphi})\nabla\overline{\varphi},\nabla\eta_n\big)\,
+\,(\xivec_n\,\overline{\varphi},\nabla\eta_n).\label{diffid9}
\end{align}
Let us now estimate the terms on the right-hand side of this equation individually.
In the remainder of this proof, we use the following abbreviating notation:
the letter $\,C\,$ will stand for positive constants that depend only on the global data of  the \red{state system}, on $\overline{\vvec}$,  and on
$[\overline{\uvec},\overline{\varphi}]$, but not on $n\in\mathbb{N}$;
moreover, by $C_\sigma$ we denote constants that in addition depend on the quantities
indicated
by the index $\sigma$, but not on $n\in\mathbb{N}$. Both $C$ and $C_\sigma$ may change
 within formulas and even
within lines.

The first two terms on the right-hand side can be estimated exactly as in \cite[Proof of Proposition 1]{FRS}, namely,
\begin{align}
&
|b(\xivec_n,\overline{\uvec},\xivec_n)|
\,\leq\,\epsilon\,\Vert\nabla\xivec_n\Vert^2+C_\epsilon\,\Vert\overline{\uvec}\Vert_{H^2(\Omega)^2}^2\,\Vert\xivec_n\Vert^2,
\label{est41}\\[2mm]
&
\big|2\,\big(\nu^{\,\prime}(\overline{\varphi})\,\eta_n \,D\overline{\uvec},D\xivec_n\big)\big|\,
\leq\,\epsilon\,\Vert\nabla\xivec_n\Vert^2\,+\,\epsilon'\,\Vert\nabla\eta_n\Vert^2
\,+\,C_{\epsilon,\epsilon'}\,\Vert\overline{\uvec}\Vert_{H^2(\Omega)^2}^2\,\Vert\eta_n\Vert^2.\label{est42}
\end{align}
Concerning the other terms, we have, using H\"older's inequality, Young's inequality, and the global
bounds (\ref{bound1}) as main tools,
the following series of estimates:
\begin{align}
\big|\big(\eta_n(\nabla K\ast\overline{\varphi}),\xivec_n\big)\big|&\,\leq\,C_K(\Vert\eta_n\Vert^2+\Vert\xivec_n\Vert^2)\,,\label{est43}\\[2mm]
\big|\big(\overline{\varphi}\,(\nabla K\ast\eta_n),\xivec_n\big)\big|&\,\leq\,C_K(\Vert\eta_n\Vert^2+\Vert\xivec_n\Vert^2)\,, \label{est431}\\[2mm]
|(\hvec,\xivec_n)|&\,\leq\,\frac{1}{2}(\Vert\hvec\Vert^2+\Vert\xivec_n\Vert^2)\,,\label{est432}\\[2mm]
\big|\big(m'(\overline{\varphi})\eta_n\nabla\left(K\ast\overline{\varphi}\right),\nabla\eta_n\big)\big|
&\,\leq\,\Vert m'(\overline{\varphi})\Vert_{L^\infty(\Omega)}
\Vert\eta_n\Vert_{L^4(\Omega)}\Vert\nabla\left(K\ast\overline{\varphi}\right)\Vert_{L^4(\Omega)}\Vert\nabla\eta_n\Vert
\nonumber\\[1mm]
&\,\leq\,\epsilon'\Vert\nabla\eta_n\Vert^2
\,+\,C_{m,\epsilon'}\Vert\eta_n\Vert_{L^4(\Omega)}^2\Vert\nabla\left(K\ast\overline{\varphi}\right)\Vert_{L^4(\Omega)}^2
\nonumber\\[1mm]
&\,\leq\,\epsilon'\Vert\nabla\eta_n\Vert^2\,+\,C_{m,K,\epsilon'}\Vert\overline{\varphi}\Vert_{H^2(\Omega)}^2
\big(\Vert\eta_n\Vert^2+\Vert\eta_n\Vert\Vert\nabla\eta_n\Vert\big)\nonumber\\[1mm]
&\,\leq\,2\epsilon'\Vert\nabla\eta_n\Vert^2\,+\,C_{m,K,\epsilon'}\Vert\eta_n\Vert^2\,, \label{est433}\\[2mm]
\big|\big(m(\overline{\varphi})\left(\nabla K\ast\eta_n\right),\nabla\eta_n\big)\big|&\,\leq\,
\Vert m(\overline{\varphi})\Vert_{L^\infty(\Omega)}\Vert\nabla K\ast\eta_n\Vert\Vert\nabla\eta_n\Vert\nonumber\\[1mm]
&\,\leq\,\epsilon'\Vert\nabla\eta_n\Vert^2\,+\,C_{m,K,\epsilon'}\Vert\eta_n\Vert^2\,,\nonumber\\[2mm]
\big|\big(\eta_n\lambda^\prime(\overline{\varphi})\nabla\overline{\varphi},\nabla\eta_n\big)\big|&\,\leq\,
\Vert\eta_n\Vert_{L^4(\Omega)}\Vert\lambda^\prime(\overline{\varphi})\Vert_{L^\infty(\Omega)}
\Vert\nabla\overline{\varphi}\Vert_{L^4(\Omega)}\Vert\nabla\eta_n\Vert \label{est434}\\[1mm]
&\,\leq\,\epsilon'\Vert\nabla\eta_n\Vert^2\,+\,C_{\lambda,\epsilon'}\Vert\eta_n\Vert^2\,,\nonumber\\[2mm]
|(\xivec_n\,\overline{\varphi},\nabla\eta_n)|&\,\leq\,\epsilon'\Vert\nabla\eta_n\Vert^2\,+\,C_{\epsilon'}\Vert\xivec_n\Vert^2\,.\label{est44}
\end{align}
Hence, inserting the estimates \eqref{est41}-\eqref{est44} in \eqref{diffid9}
and choosing $\epsilon>0$ and $\epsilon'>0$ small enough (i.e., $\epsilon\leq \nu_1/4$
 and $\epsilon'\leq\red{\alpha_0}/{12}$), we obtain the estimate
\begin{align}
&\frac{d}{dt}\big(\Vert\xivec_n\Vert^2+\Vert\eta_n\Vert^2\big)+{{\nu_1}}\,\Vert\nabla\xivec_n\Vert^2\,+\,
\red{\alpha_0}
\,\Vert\nabla\eta_n\Vert^2\nonumber\\[1mm]
&\leq {{C}}\,\big(1+\Vert\overline{\uvec}\Vert_{H^2(\Omega)^2}^2\big)\big(\Vert\xivec_n\Vert^2
+\Vert\eta_n\Vert^2\big)\,+\,\Vert\hvec\Vert^2.\label{Gr}
\end{align}
Observe now that, owing to (\ref{bound1}), the mapping $\,t\mapsto \|\overline{\uvec}(t)\|_{H^2(\Omega)^2}^2\,$ belongs to
$L^1(0,T)$. Therefore, Gronwall's lemma yields, for every $t\in (0,T]$,
\begin{equation}
\Vert\xivec_n\Vert_{L^\infty(0,t;G_{div})\cap L^2(0,t;V_{div})}\,\leq\,
C\,\Vert\hvec\Vert_\mathcal{V},
\,\quad\Vert\eta_n\Vert_{L^\infty(0,t;H)\cap L^2(0,t;V)}\,\leq\,C\,\Vert\hvec\Vert_\mathcal{V}
\,,
\label{est45}
\end{equation}
for all $n\in \mathbb{N}$.

Moreover, by comparison in \eqref{FaGa1} and \eqref{FaGa2}, we can easily deduce also the estimates for the
time derivatives
$\partial_t\xivec_n$ and $\partial_t\eta_n$. Indeed, we have, \red{for every $t\in (0,T)$,}
\begin{align}
&
\Vert\partial_t \xivec_n\Vert_{L^2(0,t;V_{div}^\prime)}\,\leq\,C\,\Vert\hvec\Vert_\mathcal{V},
\,\quad\Vert\partial_t\eta_n\Vert_{L^2(0,t;V')}\,\leq\,C\,\Vert\hvec\Vert_\mathcal{V},
\quad\mbox{ for all }\,n\in \mathbb{N}.
\label{est46}
\end{align}

From \eqref{est45}, \eqref{est46}, we deduce the existence of \red{a subsequence, which is again indexed by $n$,
such that, with two functions $\xivec$, $\eta$ satisfying \eqref{reglin}, we have}
\begin{align*}
&\red{\xivec_n \to \xivec \quad\mbox{weakly$^*$ in }\,H^1(0,T;V_{div}')\cap L^\infty(0,T;G_{div})\cap L^2(0,T;V_{div})\,,}\\
&\red{\eta_n \,\to \eta\quad\mbox{weakly$^*$ in }\,H^1(0,T;V')\cap L^\infty(0,T;H)\cap L^2(0,T;V)\,.}
\end{align*}
Then, by means of standard arguments, we can pass to the limit as $n\to\infty$ in \eqref{FaGa1}--\eqref{FaGa3} and prove
that $\xivec$, $\eta$ satisfy the weak formulation of the problem \eqref{linsy1}--\eqref{linics}.
Notice that we actually have the regularity \eqref{reglin}, since the space $H^1(0,T;V_{div}^\prime)
\cap L^2(0,T;V_{div})$ is continuously embedded in $C^0([0,T];G_{div})$; similarly
we obtain that $\eta\in C^0([0,T];H)$. \red{Moreover, from \eqref{est45}, \eqref{est46} and
the weak and weak$^*$ sequential semicontinuity of norms it follows that \eqref{stabulin} is satisfied.}

Finally, in order to prove that the solution $\xivec,\eta$ is unique, we can
test the difference between \eqref{linsy1}, \eqref{linsy2}, written for two solutions
$[\xivec_1,\eta_1]$ and $[\xivec_2,\eta_2]$, by $\xivec:=\xivec_1-\xivec_2$ and by $\eta:=\eta_1-\eta_2$,
respectively. Since the problem is linear, the argument is straightforward, and the details
can be left to the reader. \end{proof}

\vspace{3mm}
\noindent
\textbf{Differentiability of the control-to-state operator.}
\blue{In this subsection, we} prove the following result which is crucial to establish optimality conditions.

\begin{thm}
\label{diffcontstat}
Let the assumptions of Lemma \ref{stablem2} hold true. Then the control-to-state operator
${\cal S}:{\cal V}\to {\cal H}$ is Fr\'echet differentiable on ${\cal V}$ when viewed as a mapping between the spaces  ${\cal V}$ and ${\cal Z}$, where
\begin{align}
&\mathcal{Z}:=\big[C^0([0,T];G_{div})\cap L^2(0,T;V_{div})\big]\times\big[C^0([0,T];H)\cap L^2(0,T;V)\big].\nonumber
\end{align}
Moreover, for any $\overline{\vvec}\in {\cal V}$, the Fr\'echet derivative $\,{\cal S}'(\overline{\vvec})
\in {\cal L}({\cal V},{\cal Z})\,$ is given by
$\,{\cal S}'(\overline{\vvec})\hvec=[\xivec^{\hvec},\eta^{\hvec}],$
for all $\,\hvec\in\mathcal{V}$,
where $[\xivec^{\hvec},\eta^{\hvec}]$ is the unique weak solution to the linearized system \eqref{linsy1}--\eqref{linics}
at $[\overline{\uvec},\overline{\varphi}]={\cal S}(\overline{\vvec})$ that corresponds to $\hvec\in\mathcal{V}$.
\end{thm}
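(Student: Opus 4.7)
The plan is to follow the standard Fr\'echet-differentiability argument: fix $\overline{\vvec}\in\mathcal{V}$, set $[\overline{\uvec},\overline{\varphi}]:=\mathcal{S}(\overline{\vvec})$, let $\hvec\in\mathcal{V}$ with $\|\hvec\|_{\cal V}$ small, denote $[\uvec^\hvec,\varphi^\hvec]:=\mathcal{S}(\overline{\vvec}+\hvec)$, and let $[\xivec^\hvec,\eta^\hvec]$ be the unique weak solution to the linearized system \eqref{linsy1}--\eqref{linics} from Proposition~\ref{linthm}. Define the remainders
$$
\boldsymbol{\rho}:=\uvec^\hvec-\overline{\uvec}-\xivec^\hvec,\qquad \sigma:=\varphi^\hvec-\overline{\varphi}-\eta^\hvec,
$$
which satisfy $\boldsymbol{\rho}(0)=\mathbf{0}$, $\sigma(0)=0$. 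I will prove that
$$
\|\boldsymbol{\rho}\|_{C^0([0,T];G_{div})\cap L^2(0,T;V_{div})}^2+\|\sigma\|_{C^0([0,T];H)\cap L^2(0,T;V)}^2\,=\,o\bigl(\|\hvec\|_{\cal V}^{\,2}\bigr),
$$
which, together with the linearity of $\hvec\mapsto[\xivec^\hvec,\eta^\hvec]$ and its continuity as a map from $\mathcal{V}$ into $\mathcal{Z}$ (a consequence of the a priori bound \eqref{stabulin}), yields the claim.

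To obtain the PDE system satisfied by $(\boldsymbol{\rho},\sigma)$, I would subtract \eqref{linsy1}--\eqref{linics} from the difference of the state equations written at $\overline{\vvec}+\hvec$ and at $\overline{\vvec}$. The control $\hvec$ cancels in the Navier--Stokes equation, and what is left are nonlinear remainders of the Taylor-expansion type: for any smooth $g$ with $|g''|$ bounded on $[-1,1]$ one has
$$
g(\varphi^\hvec)-g(\overline{\varphi})-g'(\overline{\varphi})\eta^\hvec\,=\,g'(\overline{\varphi})\sigma+(\varphi^\hvec-\overline{\varphi})^{2}\!\!\int_0^1\!(1-\theta)\,g''\bigl(\overline{\varphi}+\theta(\varphi^\hvec-\overline{\varphi})\bigr)\,d\theta.
$$
Applying this to $g\in\{\nu,m,m',\lambda,\lambda'\}$ (where the hypothesis \textbf{(H2**)} ensures $C^2$-regularity of $\lambda$) and expanding carefully the diffusion term $\Delta B(\varphi)=\operatorname{div}(\lambda(\varphi)\nabla\varphi)$ along with the convolution terms, one obtains an equation for $\boldsymbol{\rho}$ of Navier--Stokes linearized type and an equation for $\sigma$ of linearized Cahn--Hilliard type, whose right-hand sides consist of (i) linear expressions in $(\boldsymbol{\rho},\sigma)$ with coefficients bounded in the relevant $L^p_tL^q_x$ spaces, plus (ii) quadratic remainder terms in $(\uvec^\hvec-\overline{\uvec},\varphi^\hvec-\overline{\varphi})$.

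Next I would test the $\boldsymbol{\rho}$-equation with $\boldsymbol{\rho}$ in $G_{div}$ and the $\sigma$-equation with $\sigma$ in $H$, and sum. The linear-in-$(\boldsymbol{\rho},\sigma)$ terms are absorbed exactly as in the proofs of Lemma~\ref{stab-est-res1} and Proposition~\ref{linthm}, using \eqref{GN}, \eqref{Agmon}, the uniform bounds \eqref{bound1} for $[\overline{\uvec},\overline{\varphi}]$, Young's inequality and the coercivity provided by \textbf{(V)} and \textbf{(H5)}. The quadratic remainders are where the higher-order stability estimate \eqref{stabest2} of Lemma~\ref{stablem2} enters decisively: combined with \eqref{stabulin} applied to $\eta^\hvec$, it yields
$$
\|\uvec^\hvec-\overline{\uvec}\|_{L^\infty(0,T;V_{div})\cap L^2(0,T;H^2(\Omega)^2)}+\|\varphi^\hvec-\overline{\varphi}\|_{L^\infty(0,T;V)\cap L^2(0,T;H^2(\Omega))}\,\le\,C\|\hvec\|_{\cal V},
$$
so that every quadratic remainder integrated over $Q$ is majorised by $C\|\hvec\|_{\cal V}^{\,4}$ after invoking H\"older, the 2D Gagliardo--Nirenberg inequality \eqref{GN}, and Agmon's inequality \eqref{Agmon} to control the $L^\infty_x$ norms of $\nabla(\varphi^\hvec-\overline{\varphi})$. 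The result is a differential inequality
$$
\frac{d}{dt}\bigl(\|\boldsymbol{\rho}\|^2+\|\sigma\|^2\bigr)+c\bigl(\|\nabla\boldsymbol{\rho}\|^2+\|\nabla\sigma\|^2\bigr)\,\le\,\Lambda(t)\bigl(\|\boldsymbol{\rho}\|^2+\|\sigma\|^2\bigr)+R(t),
$$
with $\Lambda\in L^1(0,T)$ depending only on the fixed state, and $\int_0^T R(t)\,dt\le C\|\hvec\|_{\cal V}^{\,4}$; Gronwall's lemma, together with the vanishing initial data, then gives the desired $o(\|\hvec\|_{\cal V}^{\,2})$ estimate.

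The main obstacle will be the treatment of the Cahn--Hilliard diffusion term. Its expansion produces, besides the linearization $\operatorname{div}(\lambda(\overline{\varphi})\nabla\sigma)$ that is absorbed by the coercive part, genuinely mixed expressions such as $\operatorname{div}\!\bigl((\lambda(\varphi^\hvec)-\lambda(\overline{\varphi}))\nabla(\varphi^\hvec-\overline{\varphi})\bigr)$ and $\operatorname{div}\!\bigl(\bigl[\lambda(\varphi^\hvec)-\lambda(\overline{\varphi})-\lambda'(\overline{\varphi})\eta^\hvec\bigr]\nabla\overline{\varphi}\bigr)$, and analogous terms coming from $m,m'$ convolved with $K$; the associated boundary contributions in the no-flux condition \eqref{stt5}$_2$ must also be handled through an analogue of Lemma~\ref{trace-product}. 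Controlling these in $L^2(0,T;H)$ with the correct $o$-order is possible only thanks to the $L^2(0,T;H^2(\Omega))$ bound on $\varphi^\hvec-\overline{\varphi}$ from Lemma~\ref{stablem2}, which is precisely why the strengthened assumptions \textbf{(H1*)}, \textbf{(H2**)} are needed; this is the technical heart of the argument and reduces the whole proof to a careful bookkeeping of Taylor-remainder terms.
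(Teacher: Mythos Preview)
Your strategy coincides with the paper's proof: define the remainders, derive their weak system, test by $(\boldsymbol{\rho},\sigma)$, absorb the linear terms by coercivity and the quadratic terms via the stability Lemma~\ref{stablem2}, then apply Gronwall. Two corrections, however. First, Lemma~\ref{stablem2} does \emph{not} give the velocity bound you state: \eqref{stabest2} controls $\uvec^{\hvec}-\overline{\uvec}$ only in $L^\infty(0,T;G_{div})\cap L^2(0,T;V_{div})$, not in $L^\infty(0,T;V_{div})\cap L^2(0,T;H^2(\Omega)^2)$. Consequently the quadratic velocity remainder $b(\uvec^{\hvec}-\overline{\uvec},\uvec^{\hvec}-\overline{\uvec},\boldsymbol{\rho})$ and the viscosity remainder $\bigl((\nu(\varphi^{\hvec})-\nu(\overline{\varphi}))D(\uvec^{\hvec}-\overline{\uvec}),D\boldsymbol{\rho}\bigr)$ must be handled with only $\|\nabla(\uvec^{\hvec}-\overline{\uvec})\|\in L^2(0,T)$ at hand; this forces an inhomogeneity of the form $\Xi^{\hvec}(t)\,\|\hvec\|_{\cal V}^{2}$ with $\int_0^T\Xi^{\hvec}\le C\|\hvec\|_{\cal V}$, so the final bound is $O(\|\hvec\|_{\cal V}^{3})$, not $O(\|\hvec\|_{\cal V}^{4})$ as you wrote---still $o(\|\hvec\|_{\cal V}^{2})$, but the bookkeeping is different. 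Second, no trace analysis in the spirit of Lemma~\ref{trace-product} is needed here: working directly in the weak formulation (where the flux condition \eqref{bcondpq} is built in) and testing by $\sigma$ produces no boundary integrals; likewise Agmon's inequality for $\nabla(\varphi^{\hvec}-\overline{\varphi})$ is unnecessary, as the paper's $L^4$--$L^4$ splittings via \eqref{GN} suffice for all the $\lambda$- and $m$-remainders.
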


\begin{proof}
Let $\overline{\vvec}\in {\cal V}$ be fixed and $[\overline{\uvec},\overline{\varphi}]={\cal S}(\overline{\vvec})$.
\red{Recalling \eqref{stabulin}, we first note that the linear mapping $\hvec\mapsto[\xivec^{\hvec},\eta^{\hvec}]$
belongs to $\mathcal{L}(\mathcal{V},\mathcal{Z})$, in particular.}
Moreover, let $\Lambda>0$ be fixed. In the following, we consider perturbations $\hvec \in {\cal V}$ such that
$\,\|\hvec\|_{\cal V}\,\le\,\Lambda$. For any such perturbation $\hvec$, we put
\begin{align}
&[\uvec^{\hvec},\varphi^{\hvec}]:={\cal S}(\overline{\vvec}+\hvec),\qquad
\pvec^{\hvec}:=\uvec^{\hvec}-\overline{\uvec}-\xivec^{\hvec},\qquad q^{\hvec}:=\varphi^{\hvec}-\overline{\varphi}-\eta^{\hvec}.\nonumber
\end{align}
Notice that we have the regularity
\begin{align}
&
\pvec^{\hvec}\in H^1(0,T;V_{div}')\cap C^0([0,T];G_{div})\cap L^2(0,T;V_{div}),
\nonumber
\\
&
q^{\hvec} \in H^1(0,T;V')\cap C^0([0,T];H)\cap L^2(0,T;V)\,.
\end{align}
By virtue of (\ref{bound1}) and of (\ref{stabest2}), there is a constant $C_1^*>0$,
which may depend on the data of the problem and on $\Lambda$, such that we have:
for every $\hvec\in {\cal V}$ with $\|\hvec\|_{\cal V}\le\Lambda$ it holds
\begin{align}
&
\left\|[\uvec^{\hvec},\varphi^{\hvec}]\right\|_{\cal H}\,\le\,C_1^*\,,\quad\,
\|\varphi^{\hvec}\|_{C^0(\overline{Q})}\,\le\,C_1^*\,,
\label{bound2}\\[2mm]
&
\Vert\uvec^{\hvec}-\overline{\uvec}\Vert_{{C^0([0,t];G_{div})}
\cap L^2(0,t;V_{div})}^2\,+\,\Vert\varphi^{\hvec}-\overline{\varphi}\Vert_{H^1(0,t;H)\cap C^0([0,t];V)
\cap L^2(0,t;H^2(\Omega))}^2
\le \,C_1^*\,\|\hvec\|_{{\cal V}}^2,
\label{bound3}
\end{align}
for every $t\in (0,T]$.

After some straightforward \red{algebraic manipulations}, we can see that
$\pvec^{\hvec}, q^{\hvec}$ (which, for simplicity, shall henceforth be denoted by
$\pvec,q$) is a solution to the weak analogue of the following problem:
\begin{align}
&\pvec_t-2\,\mbox{div}\big(\nu(\overline{\varphi})D\pvec\big)-2\,\mbox{div}\big((\nu(\varphi^{\hvec})
-\nu(\overline{\varphi}))D(\uvec^{\hvec}-\overline{\uvec})\big)
-2\,\mbox{div}\big((\nu(\varphi^{\hvec})-\nu(\overline{\varphi})-\nu^{\,\prime}(\overline{\varphi})\eta)D\overline{\uvec}\big)\nonumber\\
&\quad+\big((\uvec^{\hvec}-\overline{\uvec})\cdot\nabla\big)(\uvec^{\hvec}-\overline{\uvec})
+(\pvec\cdot\nabla)\overline{\uvec}+(\overline{\uvec}\cdot\nabla)\pvec+\nabla\widetilde{\pi}^{\hvec}\nonumber\\
&=-\big(K\ast(\varphi^{\hvec}-\overline{\varphi})\big)\nabla(\varphi^{\hvec}-\overline{\varphi})
-(-K\ast q)\nabla\overline{\varphi}-(K\ast\overline{\varphi})\nabla q\,\quad\mbox{ in }Q,\\
&q_t+(\uvec^{\hvec}-\overline{\uvec})\cdot\nabla(\varphi^{\hvec}-\overline{\varphi})+\pvec\cdot\nabla\overline{\varphi}+\overline{\uvec}\cdot\nabla q
=\mbox{div}\big(\lambda(\overline{\varphi})
\nabla q\big)\nonumber\\
&\quad+\mbox{div}\big((\lambda(\varphi^{\hvec})-\lambda(\overline{\varphi}))\nabla(\varphi^{\hvec}-\overline{\varphi})\big)
+\mbox{div}\big((\lambda(\varphi^{\hvec})-\lambda(\overline{\varphi})-(
\lambda'(\overline{\varphi}))\eta)\nabla\overline{\varphi}\big)\nonumber\\
&\quad-\mbox{div}\big((m(\varphi^{\hvec})-m(\overline{\varphi}))\nabla K\ast(\varphi^{\hvec}-\overline{\varphi})\big)
\nonumber\\
&\quad-\mbox{div}\big((m(\varphi^{\hvec})-m(\overline{\varphi})-m'(\overline{\varphi})\eta)\nabla K\ast\overline{\varphi}\big)
-\mbox{div}\big(m(\overline{\varphi})\nabla K\ast q\big)\,\quad\mbox{ in }Q,\\
&\mbox{div}(\pvec)=0\, \quad\mbox{ in }Q,\\
&\pvec=\mathbf{0}\,\quad\mbox{ on }\Sigma,\\
&\big[\lambda(\overline{\varphi})\nabla q+\big(\lambda(\varphi^{\hvec})-\lambda(\overline{\varphi})\big)\nabla(\varphi^{\hvec}-\overline{\varphi})
+\big(\lambda(\varphi^{\hvec})-\lambda(\overline{\varphi})-(
\lambda'(\overline{\varphi}))\eta\big)\nabla\overline{\varphi}\nonumber\\
&\quad-(m(\varphi^{\hvec})-m(\overline{\varphi}))\nabla K\ast(\varphi^{\hvec}-\overline{\varphi})
-(m(\varphi^{\hvec})-m(\overline{\varphi})-m'(\overline{\varphi})\eta)\nabla K\ast\overline{\varphi}\nonumber\\
&\quad-m(\overline{\varphi})\nabla K\ast q\big]\cdot\nvec=0\,
\quad\mbox{ on }\Sigma,\label{bcondpq}\\
&\pvec(0)=\mathbf{0},\qquad q(0)=0\,\quad\mbox{ in }\Omega.\label{icondpq}
\end{align}
That is, $\pvec$ and $q$ solve the following variational problem (where we avoid to write the
argument $t$ of the involved functions):
\begin{align}
&\langle\pvec_t,\wvec\rangle_{{{V_{div}}}} \,+\,2\,
\big(\nu(\overline{\varphi})D\pvec,D\wvec\big)
\,+\,2\,\big((\nu(\varphi^{\hvec})-\nu(\overline{\varphi}))D(\uvec^{\hvec}-\overline{\uvec}),D\wvec\big)
\nonumber\\[1mm]
&\quad+\,2\,\big((\nu(\varphi^{\hvec})-\nu(\overline{\varphi})-\nu'(\overline{\varphi})\eta^{\hvec})D\overline{\uvec},D\wvec\big)
+b(\pvec,\overline{\uvec},\wvec)+b(\overline{\uvec},\pvec,\wvec)\nonumber\\[1mm]
&\quad
+b(\uvec^{\hvec}-\overline{\uvec},\uvec^{\hvec}-\overline{\uvec},\wvec)\nonumber\\[1mm]
&=\,-\big(\big({K}\ast(\varphi^{\hvec}-\overline{\varphi})\big)\nabla(\varphi^{\hvec}-\overline{\varphi}),\wvec\big)
-\big(({{K}}\ast q)\nabla\overline{\varphi},\wvec\big)-\big((K\ast\overline{\varphi})\nabla q,\wvec\big),
\label{wfpeq}\\
&\langle q_t,\psi\rangle_{{{V}}}
+\big((\uvec^{\hvec}-\overline{\uvec})\cdot\nabla(\varphi^{\hvec}-\overline{\varphi}),\psi\big)
+\big(\pvec\cdot\nabla\overline{\varphi},\psi\big)+\big(\overline{\uvec}\cdot\nabla q,\psi\big)
\nonumber\\[1mm]
&=\,-\big(\lambda(\overline{\varphi})\nabla q,\nabla\psi\big)
-\big((\lambda(\varphi^{\hvec})-\lambda(\overline{\varphi}))\nabla(\varphi^{\hvec}-\overline{\varphi}),\nabla\psi\big)
-\big((\lambda(\varphi^{\hvec})-\lambda(\overline{\varphi})-(
\lambda'(\overline{\varphi}))\eta)\nabla\overline{\varphi},\nabla\psi\big)\nonumber\\
&\quad+\big((m(\varphi^{\hvec})-m(\overline{\varphi}))\nabla K\ast(\varphi^{\hvec}-\overline{\varphi}),\nabla\psi)
+\big((m(\varphi^{\hvec})-m(\overline{\varphi})-m'(\overline{\varphi})\eta)\nabla K\ast\overline{\varphi},\nabla\psi\big)\nonumber\\
&\quad+\big(m(\overline{\varphi})\nabla K\ast q,\nabla\psi\big), \label{wfqeq}
\end{align}
for every $\wvec\in V_{div}$, every $\psi\in V$ and almost every $t\in(0,T)$.

We now choose $\,\wvec=\pvec(t)\in V_{div}\,$ and $\,\psi=q(t)\in V\,$ as test functions
in equations \eqref{wfpeq} and \eqref{wfqeq}, respectively. This gives the identities (omitting the explicit dependence on $t$)
\begin{align}
&\frac{1}{2}\,\frac{d}{dt}\,\Vert\pvec\Vert^2
\,+2\int_\Omega\nu(\overline{\varphi})D\pvec:D\pvec \,dx
\,+2\int_\Omega((\nu(\varphi^{\hvec})-\nu(\overline{\varphi}))\,D(\uvec^{\hvec}-\overline{\uvec}):D\pvec\,dx\nonumber\\[1mm]
&\quad+\,2\int_\Omega\nu'(\overline{\varphi})\,q\, D\overline{\uvec}:D\pvec\,dx
\,+\int_\Omega\nu''(\sigma^{\hvec}_1)\,(\varphi^{\hvec}-\overline{\varphi})^2\,
D\overline{\uvec}:D\pvec\,dx
\,+\int_\Omega(\pvec\cdot\nabla)\overline{\uvec}\cdot\pvec\,dx
\nonumber\\[1mm]
&\quad+\int_\Omega\big((\uvec^{\hvec}-\overline{\uvec})\cdot\nabla\big)(\uvec^{\hvec}-\overline{\uvec})\cdot\pvec\,dx
=-\int_\Omega\big(K\ast(\varphi^{\hvec}-\overline{\varphi})\big)\nabla(\varphi^{\hvec}-\overline{\varphi})\cdot\pvec\,dx
\nonumber\\[1mm]
&\quad\,\,-\int_{\Omega}({{K}}\ast q)\nabla\overline{\varphi}\cdot\pvec
\,dx \,-\int_\Omega({{K}}\ast\overline{\varphi})\nabla q\cdot\pvec\,dx
\,,\label{pid}
\\[2mm]
&\frac{1}{2}\,\frac{d}{dt}\,\Vert q\Vert^2
\,+\int_\Omega\big((\uvec^{\hvec}-\overline{\uvec})\cdot\nabla(\varphi^{\hvec}-\overline{\varphi})\big)\,q\,dx\,
+\int_\Omega(\pvec\cdot\nabla\overline{\varphi})\,q\,dx
\nonumber\\[1mm]
&=\,-\int_\Omega\lambda(\overline{\varphi})|\nabla q|^2\,dx
-\int_{\Omega}(\lambda(\varphi^{\hvec})-\lambda(\overline{\varphi}))\nabla(\varphi^{\hvec}-\overline{\varphi})\cdot\nabla q\,dx
\nonumber\\
&\quad-\int_\Omega(\lambda(\varphi^{\hvec})-\lambda(\overline{\varphi})-(
\lambda'(\overline{\varphi}))\eta)\nabla\overline{\varphi}\cdot\nabla q\,dx\nonumber\\
&\quad+\int_\Omega(m(\varphi^{\hvec})-m(\overline{\varphi}))\left(\nabla K\ast(\varphi^{\hvec}-\overline{\varphi})\right)\cdot\nabla q\,dx\nonumber\\
&\quad+\int_\Omega(m(\varphi^{\hvec})-m(\overline{\varphi})-m'(\overline{\varphi})\eta)\left(\nabla K\ast\overline{\varphi}\right)\cdot\nabla q\,dx
+\int_\Omega m(\overline{\varphi})\left(\nabla K\ast q\right)\cdot\nabla q\,dx. \label{qid}
\end{align}
In \eqref{pid}, we have used Taylor's expansion
\begin{align}
&\nu(\varphi^{\hvec})=\nu(\overline{\varphi})+\nu'(\overline{\varphi})(\varphi^{\hvec}-\overline{\varphi})
+\frac{1}{2}\nu''(\sigma_1^{\hvec})(\varphi^{\hvec}-\overline{\varphi})^2,
\end{align}
where
$$\sigma_1^{\hvec}=\theta_1^{\hvec}\varphi^{\hvec}+(1-\theta_1^{\hvec})\overline{\varphi},
\quad \theta_1^{\hvec}=\theta_1^{\hvec}(x,t)\in (0,1). 
$$
Moreover, in the integration by parts on the right-hand side of \eqref{qid}, we employed
the boundary condition \eqref{bcondpq}, which can be written for $\varphi^{\hvec}$ and for $\overline{\varphi}$,
and  \eqref{linsybc}.

We now estimate all of the terms in \eqref{pid} and in \eqref{qid}. Concerning the ones in \eqref{pid},
these can be estimated exactly as in \cite{FRS}. Hence, we just report these estimates omitting the details.
We denote by $C$ positive constants that may depend on the data of the system,
but not on the choice of $\hvec\in {\cal V}$ with $\|\hvec\|_{\cal V}\le\Lambda$,
while $C_\sigma$ denotes a positive constant that also depends on the quantity
indicated by $\,\sigma$.

Denoting by $I^{(4)}_3,\dots, I^{(4)}_7$ the absolute values of the third to seventh terms on the
left-hand side of \eqref{pid}, and by $I^{(5)}_1,\dots, I^{(5)}_3$ the three terms
on the right-hand side, we have, with constants $\epsilon>0$ and $\epsilon'>0$ that will be fixed later, the following
series of estimates:
\begin{align}
&I^{(4)}_3
\leq\,\epsilon\,\Vert\nabla\pvec\Vert^2\,
+\,C_\epsilon\,
\Vert\nabla(\uvec^{\hvec}-\overline{\uvec})\Vert\,\big(\Vert\uvec^{\hvec}\Vert_{H^2(\Omega)^2}+\Vert\overline{\uvec}\Vert_{H^2(\Omega)^2}\big)
\,\|\hvec\|_{\cal V}^2\,,\label{est37}
\\
&
I^{(4)}_4
\leq\,\epsilon\,\Vert\nabla\pvec\Vert^2\,+\,\epsilon'\,\Vert\nabla q\Vert^2
\,+\,C_{\epsilon,\epsilon'}\,\big(1+\Vert\overline{\uvec}\Vert_{H^2(\Omega)^2}^2\big)\,\Vert q\Vert^2\,,
\\
&
I^{(4)}_5
\leq\,\epsilon\,\Vert\nabla\pvec\Vert^2
\,+\, C_\epsilon\,\Vert\overline{\uvec}\Vert_{H^2(\Omega)^2}^2\,
\|\hvec\|_{\cal V}^4\,,
\\
&
I^{(4)}_6
\,\leq\,\epsilon\,\Vert\nabla\pvec\Vert^2\,+\,C_\epsilon\,
\Vert\overline{\uvec}\Vert_{H^2(\Omega)^2}^2\,\Vert\pvec\Vert^2\,,
\\
&
%
I^{(4)}_7
\,\leq\,\epsilon\,\Vert\nabla\pvec\Vert^2\,+\,C_\epsilon\,
\Vert\nabla(\uvec^{\hvec}-\overline{\uvec})\Vert^2\,\|\hvec\|_{\cal V}^2\,,
\\
&
I^{(5)}_1
\,\leq\,\epsilon\,\Vert\nabla\pvec\Vert^2\,+\,C_{\epsilon}\,
\|\hvec\|_{\cal V}^4\,,
\\
&
I^{(5)}_2
\,\leq\,\epsilon\,\Vert\nabla\pvec\Vert^2\,+\,C_{\epsilon}\,\Vert q\Vert^2\,,
\\
&
I^{(5)}_3
\,\leq\,\epsilon'\,\Vert\nabla q\Vert^2\,+\,C_{\epsilon'}\,\Vert\pvec\Vert^2\,.\label{est38}
\end{align}
Let us now consider \eqref{qid}. To estimate some of the terms in this equation, we shall employ the following
identity, which holds for
general functions $G\in C^2([-1,1])$:
\begin{align}
&G(\varphi^{\hvec})-G(\overline{\varphi})-G'(\overline{\varphi})\eta=G'(\overline{\varphi}) q
+\frac{1}{2}G^{''}(\sigma^{\hvec})\left(\varphi^{\hvec}-\overline{\varphi}\right)^2,
\end{align}
with $\sigma^{\hvec}=\theta^{\hvec}\varphi^{\hvec}+(1-\theta^{\hvec})\overline{\varphi}$, $\,\theta^{\hvec}=\theta^{\hvec}(x,t)\in (0,1)$.
\red{We denote by $I^{(6)}_1, I^{(6)}_2$ the absolute values
of the two terms on the left-hand side, which can be estimated exactly as in \cite{FRS} (we
therefore omit the details),
and by $I^{(7)}_1,\dots,I^{(7)}_6$ the six terms on the right-hand side of \eqref{qid}.
Using the mean value theorem, \eqref{GN}, \eqref{bound2}, \eqref{bound3},  H\"older's and Young's inequalities, and the continuity of the embedding
$V\subset L^p(\Omega)$ for $1\le p<+\infty$ in two dimensions of space, we obtain the
following series of estimates:}
\begin{align}
I^{(6)}_1&
\leq\,\epsilon'\,\Vert\nabla q\Vert^2\,+\,\Vert q\Vert^2
\,+\,C_{\epsilon'}\,\Vert\nabla(\uvec^{\hvec}-\overline{\uvec})\Vert^2
\,\Vert\hvec\Vert^2_{\cal V}\,,\label{est39}
\\
I^{(6)}_2&
\leq\,\epsilon\,\Vert\nabla\pvec\Vert^2\,+\,C_\epsilon\,\Vert q\Vert^2\,,\\
I^{(7)}_1&\leq\, -\red{\alpha_0}\,\Vert\nabla q\Vert^2\,,\\
I^{(7)}_2&\leq\,\Vert\lambda(\varphi^{\hvec})-\lambda(\overline{\varphi})\Vert_{L^4(\Omega)}\,
\Vert\nabla(\varphi^{\hvec}-\overline{\varphi})\Vert_{L^4(\Omega)}\,\Vert\nabla q\Vert\nonumber\\
&\leq\,\red{C}\,\Vert\varphi^{\hvec}-\overline{\varphi}\Vert_{L^4(\Omega)}\,
\Vert\nabla(\varphi^{\hvec}-\overline{\varphi})\Vert_{L^4(\Omega)}\Vert\,\Vert\nabla q\Vert\nonumber\\
&\leq\,\epsilon'\,\Vert\nabla q\Vert^2+C_{\epsilon'}\,\Vert\varphi^{\hvec}-\overline{\varphi}\Vert_V^2\,
\Vert\varphi^{\hvec}-\overline{\varphi}\Vert_{H^2(\Omega)^2}^2\,,\\
I^{(7)}_3&\leq\Big(\Vert\lambda'(\overline{\varphi})q\Vert_{L^4(\Omega)}+\frac{1}{2}
\Vert\lambda''(\sigma^{\hvec}_2)(\varphi^{\hvec}-\overline{\varphi})^2\Vert_{L^4(\Omega)}\Big)\,\Vert\nabla\overline{\varphi}\Vert_{L^4(\Omega)}\,
\Vert\nabla q\Vert\nonumber\\
&\leq \,C\big(\Vert q\Vert_{L^4(\Omega)}+\Vert\varphi^{\hvec}-\overline{\varphi}\Vert_{L^8(\Omega)}^2\big)\,\Vert\nabla q\Vert \nonumber\\
&\leq C\,(\Vert q\Vert+\Vert q\Vert^{1/2}\,\Vert\nabla q\Vert^{1/2})\,\Vert\nabla q\Vert
+\Vert\varphi^{\hvec}-\overline{\varphi}\Vert_V^2\,\Vert\nabla q\Vert
\nonumber\\
&\leq \,\epsilon'\,\Vert\nabla q\Vert^2+C_{\epsilon'}\,\Vert q\Vert^2+C_{\epsilon'}\,
\Vert\varphi^{\hvec}-\overline{\varphi}\Vert_V^4
\,\leq\,\epsilon'\,\Vert\nabla q\Vert^2+C_{\epsilon'}\,
\Vert q\Vert^2+C_{\epsilon'}\,\Vert\hvec\Vert_{\mathcal{V}}^4\,,\\
I^{(7)}_4&\leq \,\Vert m(\varphi^{\hvec})-m(\overline{\varphi})\Vert_{L^4(\Omega)}\,
\Vert\nabla K\ast(\varphi^{\hvec}-\overline{\varphi})\Vert_{L^4(\Omega)}\,\Vert\nabla q\Vert
\,\leq\, C\,\Vert\varphi^{\hvec}-\overline{\varphi}\Vert_V^2\,\Vert\nabla q\Vert\nonumber\\
&\leq \,\epsilon'\,\Vert\nabla q\Vert^2+C_{\epsilon'}\,\Vert\varphi^{\hvec}-\overline{\varphi}\Vert_V^4
\,\leq\,\epsilon'\,\Vert\nabla q\Vert^2+C_{\epsilon'}\,\Vert\hvec\Vert_{\mathcal{V}}^4\,,\\
I^{(7)}_5&\leq\, \Big(\Vert m'(\overline{\varphi})\,q\Vert
+\frac{1}{2}\Vert m''(\sigma^{\hvec}_4)(\varphi^{\hvec}-\overline{\varphi})^2\Vert\Big)\,\Vert\nabla K\ast\overline{\varphi}\Vert_{L^\infty(\Omega)}
\,\Vert\nabla q\Vert\nonumber\\
&\leq \,C\big(\Vert q\Vert+\Vert\varphi^{\hvec}-\overline{\varphi}\Vert_{L^4(\Omega)}^2\big)\,\Vert\nabla q\Vert\nonumber\\
&\leq\, \epsilon'\,\Vert\nabla q\Vert^2+C_{\epsilon'}\,\Vert q\Vert^2+C_{\epsilon'}\,
\Vert\varphi^{\hvec}-\overline{\varphi}\Vert_V^4
\,\leq\,\epsilon'\,\Vert\nabla q\Vert^2+C_{\epsilon'}\,
\Vert q\Vert^2+C_{\epsilon'}\,\Vert\hvec\Vert_{\mathcal{V}}^4\,,\\
I^{(7)}_6&\leq\,\Vert m(\overline{\varphi})\Vert_{L^\infty(\Omega)}\,\Vert \nabla K \ast q\Vert\,\Vert\nabla q\Vert
\,\leq\, \epsilon'\,\Vert\nabla q\Vert^2+C_{\epsilon'}\,\Vert q\Vert^2\,.\label{est40}
\end{align}

We now insert estimates \eqref{est37}--\eqref{est38} in \eqref{pid} and the estimates \eqref{est39}--\eqref{est40} in  \eqref{qid}. Adding the resulting inequalities, and taking
$\epsilon,\epsilon'>0$ small enough (in particular, $\epsilon\leq \nu_1/16$ and $\epsilon'\leq \red{\alpha_0}/20$),
we find that
\begin{align*}
&\frac{d}{dt}\big(\Vert\pvec^{\hvec}\Vert^2+\Vert q^{\hvec}\Vert^2\big)
+\nu_1\Vert\nabla\pvec^{\hvec}\Vert^2+\red{\alpha_0}\Vert\nabla q^{\hvec}\Vert^2
\leq\Xi\,(\Vert\pvec^{\hvec}\Vert^2+\Vert q^{\hvec}\Vert^2\big)+\red{\Xi}\,\Vert\hvec\Vert_{\mathcal{V}}^4
+\Xi^{\hvec}\Vert\hvec\Vert_{\mathcal{V}}^2,
\end{align*}
where the functions $\Xi,\Xi^{\hvec}\in L^1(0,T)$ are given by
\begin{align*}
\Xi(t)&:=C\big(1+\Vert\overline{\uvec}(t)\Vert_{H^2(\Omega)^2}^2\big),\\
\Xi^{\hvec}(t)&:= C\Big(\big(\Vert\uvec^{\hvec}(t)\Vert_{H^2(\Omega)^2}+\Vert\overline{\uvec}(t)\Vert_{H^2(\Omega)^2}\big)
\Vert\nabla(\uvec^{\hvec}-\overline{\uvec})(t)\Vert+\Vert\nabla(\uvec^{\hvec}-\overline{\uvec})(t)\Vert^2\nonumber\\
&\quad+\Vert(\varphi^{\hvec}-\overline{\varphi})(t)\Vert_{H^2(\Omega)}^2\Big)\,.
\end{align*}
Recalling that $\|\hvec\|_{\cal V}\le\Lambda$, thanks to (\ref{bound2}) and (\ref{bound3}), we get
\begin{align}
&\int_0^T \Xi^{\hvec}(t)\,dt \,\leq\,C\,\Vert\hvec\Vert_{\mathcal{V}}.\nonumber
\end{align}
Taking \eqref{icondpq} into account, an application of Gronwall's lemma yields the estimate
\begin{align}
&\Vert\pvec^{\hvec}\Vert_{C^0([0,T];G_{div})}^2\,+\,\Vert\pvec^{\hvec}\Vert_{L^2(0,T;V_{div})}^2
\,+\,\Vert q^{\hvec}\Vert_{C^0([0,T];H)}^2
\,+\,\Vert q^{\hvec}\Vert_{L^2(0,T;V)}^2
\,\leq\,C\,\Vert\hvec\Vert_{\mathcal{V}}^3.\nonumber
\end{align}
We therefore have
\begin{align*}
&\frac{\Vert {\cal S}(\overline{\vvec}+\hvec)-{\cal S}(\overline{\vvec})-[\xivec^{\hvec},\eta^{\hvec}]\Vert_{\mathcal{Z}}}
{\Vert\hvec\Vert_{\mathcal{V}}}
\,=\,\frac{\Vert[\pvec^{\hvec},q^{\hvec}]\Vert_{\mathcal{Z}}}{\Vert\hvec\Vert_{\mathcal{V}}}\,
\leq\,C\,\Vert\hvec\Vert_{\mathcal{V}}^{1/2}
\to 0,
\end{align*}
as $\|\hvec\|_{\cal V}\to 0$. This concludes the proof of the assertion.
\end{proof}
\vspace{4mm}\noindent
\textbf{First-order necessary optimality conditions.}
From Theorem \ref{diffcontstat}, by arguing as in the proof of \cite[Corollary 1]{FRS}, we can deduce the following necessary optimality condition:
\begin{cor}
Let the assumptions of Lemma \ref{stablem2} hold true. If $\overline{\vvec}\in\mathcal{V}_{ad}$ is
an optimal control for {\bf (CP)} with associated state $[\overline{\uvec},\overline{\varphi}]={\cal S}(\overline{\vvec})$,
then the following inequality holds true:
\begin{align}
&\beta_1\int_0^T\!\!\int_\Omega(\overline{\uvec}-\uvec_Q)\cdot\xivec^{\hvec}\,dx\,dt
\,+\,\beta_2\int_0^T\!\!\int_\Omega(\overline{\varphi}-\varphi_Q)\,\eta^{\hvec}\,dx\,dt
\,+\,\beta_3\int_\Omega(\overline{\uvec}(T)-\uvec_\Omega)\cdot\xivec^{\hvec}(T)\,dx\nonumber\\
&+\,\beta_4\int_\Omega(\overline{\varphi}(T)-\varphi_\Omega)\,\eta^{\hvec}(T)\,dx
\,+\,\gamma\int_0^T\!\!\int_\Omega\overline{\vvec}\cdot(\vvec-\overline{\vvec})\,dx\,dt\,\geq\, 0\qquad\forall\,\vvec\in\mathcal{V}_{ad},
\label{nec.opt.cond}
\end{align}
where $[\xivec^{\hvec},\eta^{\hvec}]$ is the unique solution to the linearized system \eqref{linsy1}--\eqref{linics}
corresponding to $\hvec=\vvec-\overline{\vvec}$.
\end{cor}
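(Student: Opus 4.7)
The plan is to derive the variational inequality from the standard first-order condition for a constrained minimization problem combined with the chain rule, using the Fr\'echet differentiability result from Theorem \ref{diffcontstat}. The key observation is that the admissible set $\mathcal{V}_{ad}$ is convex, so that for any $\vvec\in\mathcal{V}_{ad}$ and every $\tau\in (0,1]$ the convex combination $\overline{\vvec}+\tau(\vvec-\overline{\vvec})$ still lies in $\mathcal{V}_{ad}$. By optimality of $\overline{\vvec}$ for the reduced functional $f(\vvec):=\mathcal{J}(\mathcal{S}(\vvec),\vvec)$, the incremental quotients satisfy
\begin{equation*}
\frac{f(\overline{\vvec}+\tau(\vvec-\overline{\vvec}))-f(\overline{\vvec})}{\tau}\,\ge\,0 \qquad \forall\,\tau\in (0,1]\,,
\end{equation*}
and it remains to pass to the limit $\tau\to 0^+$ and identify the directional derivative.

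The second step is to verify that $f$ is Fr\'echet differentiable on $\mathcal{V}$ when viewed as a mapping into $\mathbb{R}$, and to compute $f'(\overline{\vvec})$. To this end, observe that the cost functional $\mathcal{J}$ is a continuous quadratic functional on the space
\begin{equation*}
\bigl[L^2(Q)^2\cap C^0([0,T];G_{div})\bigr]\times\bigl[L^2(Q)\cap C^0([0,T];H)\bigr]\times L^2(0,T;G_{div})\,,
\end{equation*}
which embeds continuously into $\mathcal{Z}\times\mathcal{V}$. In particular, $\mathcal{J}$ is Fr\'echet differentiable on this space. Theorem \ref{diffcontstat} then provides the Fr\'echet differentiability of $\mathcal{S}:\mathcal{V}\to\mathcal{Z}$, with $\mathcal{S}'(\overline{\vvec})\hvec=[\xivec^{\hvec},\eta^{\hvec}]$. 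Applying the chain rule to $f=\mathcal{J}\circ(\mathcal{S},\mathrm{id})$, we obtain, for $\hvec:=\vvec-\overline{\vvec}\in\mathcal{V}$,
\begin{align*}
\langle f'(\overline{\vvec}),\hvec\rangle &= \beta_1\int_0^T\!\!\int_\Omega(\overline{\uvec}-\uvec_Q)\cdot\xivec^{\hvec}\,dx\,dt
+\beta_2\int_0^T\!\!\int_\Omega(\overline{\varphi}-\varphi_Q)\,\eta^{\hvec}\,dx\,dt\\
&\quad+\beta_3\int_\Omega(\overline{\uvec}(T)-\uvec_\Omega)\cdot\xivec^{\hvec}(T)\,dx
+\beta_4\int_\Omega(\overline{\varphi}(T)-\varphi_\Omega)\,\eta^{\hvec}(T)\,dx\\
&\quad+\gamma\int_0^T\!\!\int_\Omega\overline{\vvec}\cdot\hvec\,dx\,dt\,.
\end{align*}
Note that the evaluation at $t=T$ appearing in the $\beta_3$ and $\beta_4$ terms is meaningful thanks to the $C^0([0,T];G_{div})\times C^0([0,T];H)$ regularity built into the definition of $\mathcal{Z}$.

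Combining these two steps, the passage to the limit $\tau\to 0^+$ in the incremental quotient yields $\langle f'(\overline{\vvec}),\vvec-\overline{\vvec}\rangle\ge 0$ for every $\vvec\in\mathcal{V}_{ad}$, and substituting $\hvec=\vvec-\overline{\vvec}$ into the explicit formula above gives exactly \eqref{nec.opt.cond}. The only mildly delicate point, and presumably the main obstacle, is to ensure that the cost functional is differentiable in a norm that is controlled by the target space $\mathcal{Z}$ of Theorem \ref{diffcontstat}; this works because the $\beta_1,\beta_2$ terms only require the $L^2(Q)$ topology (available via continuous embedding $\mathcal{Z}\hookrightarrow L^2(Q)^2\times L^2(Q)$), while the $\beta_3,\beta_4$ terms require continuity of the trace at $t=T$ in $G_{div}$ and in $H$, which is guaranteed by the $C^0$-component of $\mathcal{Z}$. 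The $\gamma$ term is trivially Fr\'echet differentiable on $\mathcal{V}$. Once this differentiability framework is in place, the rest is routine and mirrors the argument in \cite[Corollary 1]{FRS}.
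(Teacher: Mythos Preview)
Your proposal is correct and follows the standard route that the paper itself only gestures at by citing \cite[Corollary 1]{FRS}: convexity of $\mathcal{V}_{ad}$, nonnegativity of the difference quotient at the minimizer, Fr\'echet differentiability of the reduced functional via the chain rule and Theorem~\ref{diffcontstat}, and identification of the directional derivative. One small wording slip: where you say the space on which $\mathcal{J}$ is quadratic ``embeds continuously into $\mathcal{Z}\times\mathcal{V}$,'' you actually need (and later correctly use) the reverse embedding $\mathcal{Z}\hookrightarrow \bigl[L^2(Q)^2\cap C^0([0,T];G_{div})\bigr]\times\bigl[L^2(Q)\cap C^0([0,T];H)\bigr]$, so that differentiability of $\mathcal{J}$ on the larger space transfers to differentiability along $\mathcal{S}$.
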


\vspace{4mm}\noindent
\textbf{The adjoint system and first-order necessary optimality conditions.}
\,We now aim to eliminate the variables $\,[\xivec^{\hvec},\eta^{\hvec}]\,$ from the variational
inequality (\ref{nec.opt.cond}). To this end, let
 us introduce the following {\itshape adjoint system}:
\begin{align}
\ptil_t\,=\,&-\,2\,\mbox{div}\big(\nu(\overline{\varphi})\,D\ptil\big)
-(\overline{\uvec}\cdot\nabla)\,\ptil+(\ptil\cdot\nabla^T)\,\overline{\uvec}
\,+\,\qtil\,\nabla\overline{\varphi}-\beta_1(\overline{\uvec}-\uvec_Q),\quad\mbox{ in } Q, \label{adJ1}\\[3mm]
\qtil_t\,=\,&-\mbox{div}\big(\lambda(\overline{\varphi})\nabla\qtil\big)
\,-\, m'(\overline{\varphi})\nabla(K\ast\overline{\varphi})\cdot\nabla\qtil\nonumber\\[1mm]
&\,-\nabla K\dot{\ast}(m(\overline{\varphi})\nabla\qtil)\,+\,\lambda'(\overline{\varphi})\nabla\overline{\varphi}\cdot\nabla\qtil
\,-\,(\nabla K\ast\overline{\varphi})\cdot\ptil\,-\,\nabla K\ast(\overline{\varphi}\,\ptil)\nonumber\\[1mm]
&\,+\,2 \nu^{\,\prime}(\overline{\varphi})\,D\overline{\uvec}:D\ptil\,-\,
\overline{\uvec}\cdot\nabla\qtil
\,-\,\beta_2(\overline{\varphi}-\varphi_Q), \quad\mbox{ in } Q,\label{adJ2}\\[1mm]
\mbox{div}(&\ptil)=0, \quad\mbox{ in } Q,\label{adJ3}\\[1mm]
\ptil=&\mathbf{0},\qquad\frac{\partial\qtil}{\partial\nvec}=0,\quad\mbox{ on }\Sigma,\label{adJ4}\\[1mm]
\ptil(T&)=\beta_3(\overline{\uvec}(T)-\uvec_\Omega),\quad\qtil(T)=\beta_4(\overline{\varphi}(T)-\varphi_\Omega), \quad\mbox{ in } \Omega.
\label{adJics}
\end{align}
Here, we have set
$$(\nabla {{K}}\dot{\ast}\nabla\qtil)(x):=\int_\Omega\nabla {{K}}(x-y)\cdot\nabla\qtil(y) \,dy\, \quad\mbox{for a.\,e. }\,x\in\Omega\,.
$$
Recalling that $\uvec_\Omega\in G_{div}$ and $\varphi_\Omega\in H$, we expect the solution to \eqref{adJ1}--\eqref{adJics}
to have the \red{regularity} properties
\begin{align}
\ptil&\in
H^1(0,T;V_{div}^\prime)
\cap C([0,T];G_{div})\cap L^2(0,T;V_{div}), \label{reg.adJ.sol0}\\
\qtil&\in
H^1(0,T;V')
\cap C([0,T];H)\cap L^2(0,T;V).\label{reg.adJ.sol}
\end{align}
Hence, the pair $[\ptil,\qtil\,]$ must be understood as a solution to the weak formulation
of the system \eqref{adJ1}--\eqref{adJics}. In particular, the following identities must hold:
\begin{align}
&\langle\ptil_t,\zvec\rangle_{{{V_{div}}}}\,=\,2\,\big(\nu(\overline{\varphi})D\ptil,D\zvec\big)
\,-\,b(\overline{\uvec},\ptil,\zvec)+b(\zvec,\overline{\uvec},\ptil)
+\big(\qtil\nabla\overline{\varphi},\zvec\big)-\beta_1\big((\overline{\uvec}-\uvec_Q),\zvec\big),\label{wfadj1}\\[3mm]
&\langle\qtil_t,\chi\rangle_{{{V}}}\,=\,\big(\lambda(\overline{\varphi})\nabla\qtil,\nabla\chi\big)
\,-\, \big(m'(\overline{\varphi})\nabla(K\ast\overline{\varphi})\cdot\nabla\qtil,\chi\big)\nonumber\\[1mm]
&\qquad\quad\quad\quad\,-\big(\nabla K\dot{\ast}(m(\overline{\varphi})\nabla\qtil),\chi\big)\,+\,\big(\lambda'(\overline{\varphi})\nabla\overline{\varphi}\cdot\nabla\qtil,\chi\big)
\,-\,\big((\nabla K\ast\overline{\varphi})\cdot\ptil,\chi\big)\nonumber\\[1mm]
&\qquad\quad\qquad\,-\,\big(\nabla K\ast(\overline{\varphi}\,\ptil),\chi\big)\,+\,2\,\big( \nu^{\,\prime}(\overline{\varphi})\,D\overline{\uvec}:D\ptil,\chi\big)\nonumber\\[1mm]
&\qquad\quad\qquad\,-\,\big(\overline{\uvec}\cdot\nabla\qtil,\chi\big)
\,-\,\big(\beta_2(\overline{\varphi}-\varphi_Q),\chi\big)\,,\label{wfadj2}
\end{align}
for every $\zvec\in V_{div}$, every $\chi\in V$ and almost every $t\in (0,T)$.
\red{We have the following result.}

\begin{prop}
Let the assumptions of Lemma \ref{stablem2} hold true.
Then the adjoint system \eqref{adJ1}--\eqref{adJics} has a unique weak solution $[\ptil,\qtil]$ satisfying
\eqref{reg.adJ.sol0}--\eqref{reg.adJ.sol}.
\end{prop}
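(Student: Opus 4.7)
The adjoint system \eqref{adJ1}--\eqref{adJics} is linear with coefficients depending on the fixed state $[\overline{\uvec},\overline{\varphi}]$, whose regularity is given by Theorem \ref{reg-thm}. The plan is to first reverse time by setting $\sigma:=T-t$ and $[P(\sigma),Q(\sigma)]:=[\ptil(T-\sigma),\qtil(T-\sigma)]$, converting \eqref{adJ1}--\eqref{adJics} into a forward-in-time linear initial value problem with data $P(0)=\beta_3(\overline{\uvec}(T)-\uvec_\Omega)\in G_{div}$ and $Q(0)=\beta_4(\overline{\varphi}(T)-\varphi_\Omega)\in H$. We then apply a Faedo--Galerkin scheme using the same bases as in the proof of Proposition \ref{linthm}: the Stokes eigenfunctions $\{\wvec_j\}$ for the $P$ equation and the Neumann eigenfunctions $\{\psi_j\}$ for the $Q$ equation. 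Since the coefficients arising from $\nu(\overline{\varphi}),\lambda(\overline{\varphi}),m(\overline{\varphi}),\ldots$ lie in $L^\infty(Q)$, while $\nabla\overline{\varphi}\in L^\infty(0,T;L^p)$ and $D\overline{\uvec}\in L^2(0,T;L^p)$ for every $p<\infty$ by \eqref{bound1} and Sobolev embeddings in $2$D, the reduced ODE system has $L^2$-in-time coefficients and Carath\'eodory's theorem gives a unique approximate solution $[P_n,Q_n]$ on $[0,T]$.

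To obtain uniform a priori estimates, test the Galerkin equations by $P_n$ and $Q_n$ and add. The leading dissipative part produces $2(\nu(\overline{\varphi})DP_n,DP_n)\geq \nu_1\|\nabla P_n\|^2$ and $(\lambda(\overline{\varphi})\nabla Q_n,\nabla Q_n)\geq \alpha_0\|\nabla Q_n\|^2$ by \textbf{(V)} and \textbf{(H5)}. The remaining terms are estimated in the spirit of Proposition \ref{linthm}, using H\"older's inequality, the Gagliardo--Nirenberg inequality \eqref{GN}, Young's inequality, and the global bounds \eqref{bound1}. The trilinear terms $b(\overline{\uvec},P_n,P_n)=0$ and $|b(P_n,\overline{\uvec},P_n)|\leq \epsilon\|\nabla P_n\|^2+C_\epsilon\|\overline{\uvec}\|_{H^2(\Omega)^2}^2\|P_n\|^2$; the coupling $(\qtil\nabla\overline{\varphi},P_n)$ and $(\xivec\nabla\overline{\varphi},Q_n)$-type terms are bounded by $\epsilon(\|\nabla P_n\|^2+\|\nabla Q_n\|^2)+C_\epsilon(\|P_n\|^2+\|Q_n\|^2)$; and the convolution terms are controlled by Young's convolution inequality using \textbf{(K)}, in particular $\|\nabla K\,\dot{\ast}\,(m(\overline{\varphi})\nabla Q_n)\|\leq \|\nabla K\|_{L^1}\|m\|_\infty\|\nabla Q_n\|$ and $\|\nabla K\ast(\overline{\varphi}\,P_n)\|\leq \|\nabla K\|_{L^1}\|\overline{\varphi}\|_\infty\|P_n\|$. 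After absorption we arrive at a Gronwall-type differential inequality
\begin{align*}
\frac{d}{d\sigma}(\|P_n\|^2+\|Q_n\|^2)+\tfrac{\nu_1}{2}\|\nabla P_n\|^2+\tfrac{\alpha_0}{2}\|\nabla Q_n\|^2
\leq \Lambda(\sigma)(\|P_n\|^2+\|Q_n\|^2)+F(\sigma),
\end{align*}
with $\Lambda\in L^1(0,T)$ (containing $\|\overline{\uvec}\|_{H^2(\Omega)^2}^2$) and $F\in L^1(0,T)$ (containing the data terms $\beta_1(\overline{\uvec}-\uvec_Q)$ and $\beta_2(\overline{\varphi}-\varphi_Q)$). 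Gronwall's lemma yields uniform bounds on $\|P_n\|_{L^\infty(0,T;G_{div})\cap L^2(0,T;V_{div})}$ and $\|Q_n\|_{L^\infty(0,T;H)\cap L^2(0,T;V)}$; by comparison in the equations, also $\partial_\sigma P_n$ and $\partial_\sigma Q_n$ are bounded in $L^2(0,T;V_{div}')$ and $L^2(0,T;V')$, respectively.

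Weak-$*$ and weak compactness now yield a subsequence converging to $[P,Q]$ with the regularity \eqref{reg.adJ.sol0}--\eqref{reg.adJ.sol}; Aubin--Lions provides strong $L^2(Q)$ convergence sufficient to pass to the limit in every linear term of the weak Galerkin formulation, giving a weak solution to the reversed system and hence, after undoing the time change, the desired $[\ptil,\qtil]$. Uniqueness follows from linearity: if $[\ptil_1,\qtil_1]$ and $[\ptil_2,\qtil_2]$ are two weak solutions, their difference solves the homogeneous system with zero final data, and exactly the energy estimate above (with $F\equiv 0$) combined with Gronwall forces $\ptil_1=\ptil_2$ and $\qtil_1=\qtil_2$.

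The main technical obstacle will be the mixed term $2(\nu^{\,\prime}(\overline{\varphi})D\overline{\uvec}:D\ptil,\chi)$ in the $\qtil$ equation tested by $\chi=Q_n$, because the factor $D\overline{\uvec}$ is only controlled in $L^2(0,T;L^p(\Omega))$ for $p<\infty$. The argument must estimate this term as $C\|\overline{\uvec}\|_{H^2(\Omega)^2}^{1/2}\|\overline{\uvec}\|_{V_{div}}^{1/2}\|\nabla P_n\|\|Q_n\|^{1/2}\|Q_n\|_V^{1/2}$ (via \eqref{GN} and the $H^2\hookrightarrow W^{1,4}$ embedding), then use Young's inequality to absorb portions into the dissipative terms $\nu_1\|\nabla P_n\|^2$ and $\alpha_0\|\nabla Q_n\|^2$, leaving a residual of the form $C\|\overline{\uvec}\|_{H^2(\Omega)^2}^2\|Q_n\|^2$ whose coefficient, being in $L^1(0,T)$ thanks to \eqref{bound1}, is compatible with Gronwall. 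The analogous term in the adjoint equation for $\ptil$ and the nonstandard convolution $\nabla K\,\dot{\ast}\,(m(\overline{\varphi})\nabla\qtil)$ are handled similarly, the latter by duality once one observes that $\|\nabla K\,\dot{\ast}\,f\|\leq \|\nabla K\|_{L^1}\|f\|$ by Young's convolution inequality.
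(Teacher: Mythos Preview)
Your proposal is correct and follows essentially the same approach as the paper: a Faedo--Galerkin scheme with the Stokes and Neumann eigenbases, energy estimates obtained by testing with $\ptil$ and $\qtil$, the same term-by-term bounds (including the delicate $2(\nu'(\overline{\varphi})D\overline{\uvec}\!:\!D\ptil,\qtil)$ term handled via $\|D\overline{\uvec}\|_{L^4}$ and \eqref{GN}), Gronwall, and comparison for the time derivatives. The only cosmetic difference is that you first reverse time and then apply the forward Gronwall lemma, whereas the paper works directly with the backward system and invokes the backward Gronwall lemma; these are equivalent, and your account is in fact slightly more detailed about the passage to the limit.
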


\begin{proof}
We only give a sketch of the proof, which can be carried out arguing as the proof of Proposition
\ref{linthm}. In particular, we omit the details of the \red{construction of an
approximating} Faedo--Galerkin scheme and only derive
the basic a priori estimates. To this end,
we take $\zvec=\ptil(t)\in V_{div}$ in \eqref{wfadj1} and $\chi=\qtil(t)\in H$ in (\ref{wfadj2}), and add the resulting
equations.
Omitting the argument $t$ again, we now estimate all the terms on the right-hand side
of the resulting identity. We denote by $C$ positive constants that only depend on the global data and
on $[\overline{\uvec},\overline{\varphi}]$, while $C_\sigma$ stands for positive constants that also
depend on the quantity indicated by the index $\sigma$. Using the elementary Young's inequality \eqref{Young},
the H\"older and Gagliardo--Nirenberg inequalities \red{(cf. \eqref{GN})}, Young's inequality for convolution
integrals, as well as the assumptions and the global bound (\ref{bound1}),
we obtain (with positive constants $\epsilon$ and $\epsilon'$ that will be fixed later) the following series
of estimates:
\begin{align}
&\Big|\int_\Omega(\ptil\cdot\nabla^T)\overline{\uvec}\cdot\ptil\,dx\Big|
\,\leq\,\Vert\ptil\Vert\,\Vert\nabla\overline{\uvec}\Vert_{L^4(\Omega)^{2\times 2}}\,\,\Vert\ptil\Vert_{L^4(\Omega)^2}
\,\leq\,\epsilon\,\Vert\nabla\ptil\Vert^2+C_\epsilon\,\Vert\overline{\uvec}\Vert_{H^2(\Omega)^2}^2\,\Vert\ptil\Vert^2,\\[2mm]
&\Big|\int_\Omega\qtil\,\nabla\overline{\varphi}\cdot\ptil\,dx\Big| \,\leq\,\Vert\qtil\Vert\,\Vert\nabla\overline{\varphi}\Vert_{L^4(\Omega)^2}\,\Vert\ptil\Vert_{L^4(\Omega)^2}
\,\leq\,\epsilon\,\Vert\nabla\ptil\Vert^2+C_\epsilon\,\Vert\qtil\Vert^2,\\[2mm]
&\Big|\beta_1\int_\Omega(\overline{\uvec}-\overline{\uvec}_Q)\cdot\ptil\,dx\Big|
\,\leq\,\beta_1\,\Vert\overline{\uvec}-\overline{\uvec}_Q\Vert\,
\Vert\ptil\Vert\,\leq\,\Vert\ptil\Vert^2
+\frac{\beta_1^2}{4}\,\Vert\overline{\uvec}-\overline{\uvec}_Q\Vert^2,\\[2mm]
&\Big|\int_\Omega m'(\overline{\varphi})\,\qtil\,\nabla(K\ast\overline{\varphi})\cdot\nabla\qtil\,dx\Big|\,\leq\,
m'_\infty\,\Vert\qtil\Vert_{L^4(\Omega)}\,\Vert\nabla(K\ast\overline{\varphi})\Vert_{L^4(\Omega)^2}\,\Vert\nabla\qtil\Vert
\nonumber\\[2mm]
&\,\leq \,C_{m,K}\,\big(\Vert\qtil\Vert+\Vert\qtil\Vert^{1/2}\,\Vert\nabla\qtil\Vert^{1/2}\big)\,\Vert\nabla\qtil\Vert
\,\leq\,\epsilon'\,\Vert\nabla\qtil\Vert^2+C_{\epsilon',m,K}\,\Vert\qtil\Vert^2\,,\\[2mm]
&\Big|\int_\Omega \qtil\,\nabla K\dot{\ast}(m(\overline{\varphi})\nabla\qtil)\,dx\Big|\,\leq\,C_K\,\Vert\qtil\Vert\,
\Vert m(\overline{\varphi})\nabla\qtil\Vert\,\leq\,\epsilon'\,\Vert\nabla\qtil\Vert^2+C_{\epsilon',m,K}\,\Vert\qtil\Vert^2\,,\\[2mm]
&\Big|\int_\Omega\lambda'(\overline{\varphi})\,\qtil\,\nabla\overline{\varphi}\cdot\nabla\qtil\,dx\Big|\,\leq\,
\lambda'_\infty\,\Vert\qtil\Vert_{L^4(\Omega)}\,\Vert\nabla\overline{\varphi}\Vert_{L^4(\Omega)^2}\,\Vert\nabla\qtil\Vert\\
&\,\leq\,
C_{\lambda}\,\big(\Vert\qtil\Vert+\Vert\qtil\Vert^{1/2}\,\Vert\nabla\qtil\Vert^{1/2}\big)\,\Vert\nabla\qtil\Vert
\,\leq\,\epsilon'\,\Vert\nabla\qtil\Vert^2+C_{\epsilon',\lambda}\,\Vert\qtil\Vert^2\,,\\[2mm]
&\Big|\int_\Omega(\nabla K\ast\overline{\varphi})\cdot\ptil\,\qtil\,dx\Big|\,\leq\,C_K(\Vert\ptil\Vert^2+\Vert\qtil\Vert^2)\,,\\[2mm]
&\Big|\int_\Omega\qtil\,\nabla K\ast(\overline{\varphi}\,\ptil)\,dx\Big|\,\leq\,C_K(\Vert\ptil\Vert^2+\Vert\qtil\Vert^2)\,,\\[2mm]
&\Big|2\int_\Omega\big(\nu^{\,\prime}(\overline{\varphi})\,D\overline{\uvec}\!:\!D\ptil\big)\,\qtil
\,dx\Big| \,\leq\,
C_{\nu}\,\Vert D\overline{\uvec}\Vert_{L^4(\Omega)^{2\times 2}}\,\Vert D\ptil\Vert\,
\Vert \qtil\Vert_{L^4(\Omega)}
\\[2mm]
&\quad\le\,
C_{\nu}\,\Vert D\overline{\uvec}\Vert_{L^4(\Omega)^{2\times 2}}\,\Vert D\ptil\Vert\,\big
(\Vert\qtil\Vert+
\Vert\qtil\Vert^{1/2}\,\Vert\nabla\qtil\Vert^{1/2}\big)
\\[2mm]
&\quad\leq\,\epsilon\,\Vert\nabla\ptil\Vert^2+\epsilon'\,\Vert\nabla\qtil\Vert^2
+C_{\epsilon,\epsilon',\nu}\,\big(1+\Vert\overline{\uvec}\Vert_{H^2(\Omega)^2}^2\big)
\,\Vert\qtil\Vert^2,\\[2mm]
&\Big|\beta_2\int_\Omega(\overline{\varphi}-\varphi_Q)\,\qtil\,dx\Big|\,\leq\,\beta_2\,\Vert\overline{\varphi}-\varphi_Q\Vert\,\Vert\qtil\Vert
\,\leq\,\Vert\qtil\Vert^2+\frac{\beta_2^2}{4}\,\Vert\overline{\varphi}-\varphi_Q\Vert^2\,.
\end{align}
Choosing now $\epsilon>0$ and $\epsilon'>0$ small enough (in particular, $\,3\,\epsilon\leq
\nu_1/2\,$ and $\,\red{5\,\epsilon'\leq \alpha_0}/2$), we arrive at the following differential inequality:
\begin{align}
&\frac{d}{dt}\,\big(\Vert\ptil\Vert^2+\Vert\qtil\Vert^2\big)\,+\,\theta_1\,\big(\Vert\ptil\Vert^2+\Vert\qtil\Vert^2\big)+\theta_2\,
\geq\, \nu_1\,\Vert\nabla\ptil\Vert^2+\red{\alpha_0}\,\Vert\nabla\qtil\Vert^2,\label{diffineq3}
\end{align}
where the functions $\,\theta_1,\theta_2\in L^1(0,T)\,$ are given by
\begin{align*}
&\theta_1(t):=C\,\big(1+\Vert\overline{\uvec}(t)\Vert_{H^2(\Omega)^2}^2\big),
\qquad\theta_2(t):=\beta_1^2\,\Vert(\overline{\uvec}-\uvec_Q)(t)\Vert^2+\,\beta_2^2\,\Vert(\overline{\varphi}-\varphi_Q)(t)\Vert^2.
\end{align*}
By applying the (backward) Gronwall lemma to \eqref{diffineq3}, we obtain
\begin{align*}
&\Vert\ptil(t)\Vert^2+\Vert\qtil(t)\Vert^2\leq\Big[\Vert\ptil(T)\Vert^2+\Vert\qtil(T)\Vert^2
+\int_t^T\theta_2(\tau)d\tau\Big]e^{\int_t^T\theta_1(\tau)d\tau}\nonumber\\
&\leq\,C\,\Big[\Vert\ptil(T)\Vert^2+\Vert\qtil(T)\Vert^2+
\beta_1^2\,\Vert\overline{\uvec}-\uvec_Q\Vert_{L^2(0,T;G_{div})}^2
+\beta_2^2\,\Vert\overline{\varphi}-\varphi_Q\Vert_{L^2(Q)}^2\Big],
\end{align*}
for all $t\in[0,T]$. From this estimate, and by integrating \eqref{diffineq3}
over $[t,T]$,
we can
deduce the estimates for $\ptil$ and $\qtil$ in $C^0([0,T];G_{div})\cap L^2(0,T;V_{div})$
and in $C^0([0,T];H)\cap L^2(0,T;V)$, respectively.
A comparison argument in \eqref{adJ1} and \eqref{adJ2} entails the estimates for
$\ptil_t$ and $\qtil_t$ in $L^2(0,T;V_{div}')$ and in $L^2(0,T;V')$, respectively.
We therefore can deduce the existence of a weak solution to system \eqref{adJ1}--\eqref{adJics}
satisfying \eqref{reg.adJ.sol0}--\eqref{reg.adJ.sol}.
The proof of uniqueness is rather straightforward, and \red{we may allow ourselves to leave it
to the interested reader}.
\end{proof}

Using the adjoint system, we can now eliminate $\xivec^{\hvec},\eta^{\hvec}$ from \eqref{nec.opt.cond}.
Indeed, we have the following result.
\begin{thm}
Let the assumptions of Lemma \ref{stablem2} hold true.
If $\overline{\vvec}\in\mathcal{V}_{ad}$ is an optimal control
for {\bf (CP)} with associated state $[\overline{\uvec},\overline{\varphi}]={\cal S}(\overline{\vvec})$
and adjoint state $[\ptil,\qtil]$, then the following variational inequality holds true:
 \begin{align}
&\gamma\int_0^T\!\!\int_\Omega\overline{\vvec}\cdot(\vvec-\overline{\vvec})\,dx\,dt\,+\int_0^T\!\!\int_\Omega\ptil\cdot(\vvec-\overline{\vvec})\,dx\,dt \,\geq\, 0
\,,\quad\forall\,\vvec\in\mathcal{V}_{ad}.\label{nec.opt.cond2}
\end{align}
\end{thm}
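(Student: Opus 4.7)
The plan is to establish the duality identity
\begin{align*}
\int_0^T\!\!\int_\Omega\ptil\cdot\hvec\,dx\,dt\,=\,&
\beta_1\!\int_0^T\!\!\int_\Omega(\overline{\uvec}-\uvec_Q)\cdot\xivec^{\hvec}\,dx\,dt
\,+\,\beta_2\!\int_0^T\!\!\int_\Omega(\overline{\varphi}-\varphi_Q)\,\eta^{\hvec}\,dx\,dt\\
&+\,\beta_3\!\int_\Omega(\overline{\uvec}(T)-\uvec_\Omega)\cdot\xivec^{\hvec}(T)\,dx
\,+\,\beta_4\!\int_\Omega(\overline{\varphi}(T)-\varphi_\Omega)\,\eta^{\hvec}(T)\,dx,
\end{align*}
with $\hvec:=\vvec-\overline{\vvec}\in\mathcal{V}$ and $[\xivec^{\hvec},\eta^{\hvec}]$ the unique weak solution to the linearized system \eqref{linsy1}--\eqref{linics} provided by Proposition~\ref{linthm}. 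Since $\mathcal{V}_{ad}-\mathcal{V}_{ad}\subset\mathcal{V}$, the right-hand side is well defined; once the identity is in hand, direct substitution into \eqref{nec.opt.cond} immediately produces \eqref{nec.opt.cond2}.

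To prove the identity, I would carry out the standard dualization procedure: insert $\wvec=\ptil(t)\in V_{div}$ as test function into the weak form of \eqref{linsy1}, $\psi=\qtil(t)\in V$ into that of \eqref{linsy2}, and in parallel $\zvec=\xivec^{\hvec}(t)$ into \eqref{wfadj1} and $\chi=\eta^{\hvec}(t)$ into \eqref{wfadj2}. Summing the four resulting identities and integrating over $(0,T)$, the regularities \eqref{reglin} and \eqref{reg.adJ.sol0}--\eqref{reg.adJ.sol} legitimize the integration-by-parts formula
\[
\int_0^T\!\big(\langle\xivec^{\hvec}_t,\ptil\rangle_{V_{div}}+\langle\ptil_t,\xivec^{\hvec}\rangle_{V_{div}}\big)\,dt\,=\,\big(\ptil(T),\xivec^{\hvec}(T)\big)-\big(\ptil(0),\xivec^{\hvec}(0)\big),
\]
and its analogue for the pair $[\eta^{\hvec},\qtil]$. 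Combined with the initial conditions $\xivec^{\hvec}(0)=\mathbf{0}$, $\eta^{\hvec}(0)=0$ (cf.~\eqref{linics}) and the terminal conditions $\ptil(T)=\beta_3(\overline{\uvec}(T)-\uvec_\Omega)$, $\qtil(T)=\beta_4(\overline{\varphi}(T)-\varphi_\Omega)$ (cf.~\eqref{adJics}), this step yields exactly the third and fourth summands on the right-hand side of the duality identity. The adjoint forcings $-\beta_1(\overline{\uvec}-\uvec_Q)$ in \eqref{adJ1} and $-\beta_2(\overline{\varphi}-\varphi_Q)$ in \eqref{adJ2} generate the $\beta_1$- and $\beta_2$-summands, while the inhomogeneity $\hvec$ of \eqref{linsy1} produces $\int_0^T(\hvec,\ptil)\,dt$ on the left.

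The remaining bilinear contributions must cancel pairwise by construction of the adjoint system. The symmetric Stokes pair $2(\nu(\overline{\varphi})D\xivec^{\hvec},D\ptil)$ cancels at once; the convective terms $b(\overline{\uvec},\xivec^{\hvec},\ptil)+b(\xivec^{\hvec},\overline{\uvec},\ptil)$ are annihilated, via $\mbox{div}\,\overline{\uvec}=0$ and the identity $b(\xivec^{\hvec},\overline{\uvec},\ptil)=((\ptil\cdot\nabla^T)\overline{\uvec},\xivec^{\hvec})$, by the counterparts $-(\overline{\uvec}\cdot\nabla)\ptil+(\ptil\cdot\nabla^T)\overline{\uvec}$ in \eqref{adJ1}; the couplings $(\xivec^{\hvec}\cdot\nabla\overline{\varphi},\qtil)$ and $2(\nu'(\overline{\varphi})\eta^{\hvec}D\overline{\uvec},D\ptil)$ are matched, respectively, by the sources $\qtil\nabla\overline{\varphi}$ in \eqref{adJ1} and $2\nu'(\overline{\varphi})D\overline{\uvec}\!:\!D\ptil$ in \eqref{adJ2} (after an integration by parts exploiting $\mbox{div}\,\xivec^{\hvec}=0$ and $\xivec^{\hvec}=\mathbf{0}$ on $\partial\Omega$); and the local $\lambda(\overline{\varphi})$-, $m'(\overline{\varphi})\nabla(K\ast\overline{\varphi})$-, $\lambda'(\overline{\varphi})\nabla\overline{\varphi}$- and $\overline{\uvec}$-terms in \eqref{linsy2} and \eqref{adJ2} pair off after a further integration by parts in space, the combined boundary contributions vanishing thanks to \eqref{linsybc} and the homogeneous Neumann condition on $\qtil$ in \eqref{adJ4}. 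The main technical obstacle is the bookkeeping of the three nonlocal cross-couplings: here Fubini's theorem, together with the oddness $\nabla K(x-y)=-\nabla K(y-x)$ stemming from $K(x)=K(-x)$, yields the crucial identities
\[
(\overline{\varphi}\nabla K\ast\eta^{\hvec},\ptil)\,=\,-(\eta^{\hvec},\nabla K\ast(\overline{\varphi}\ptil)),\qquad
(m(\overline{\varphi})\nabla K\ast\eta^{\hvec},\nabla\qtil)\,=\,-(\eta^{\hvec},\nabla K\dot{\ast}(m(\overline{\varphi})\nabla\qtil)),
\]
which are precisely matched by the corresponding nonlocal terms $-(\nabla K\ast\overline{\varphi})\cdot\ptil$, $-\nabla K\ast(\overline{\varphi}\ptil)$ and $-\nabla K\dot{\ast}(m(\overline{\varphi})\nabla\qtil)$ in \eqref{adJ2}. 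With all cancellations verified, the duality identity is established, and the conclusion \eqref{nec.opt.cond2} follows from \eqref{nec.opt.cond}. If desired, full rigor for the dualization can be obtained by performing these manipulations at the Galerkin level and then passing to the limit.
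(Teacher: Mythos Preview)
Your proposal is correct and follows essentially the same approach as the paper: both establish the duality identity $\mathcal{I}=\int_0^T(\ptil,\vvec-\overline{\vvec})\,dt$ by testing the linearized system with the adjoint variables (and vice versa), integrating by parts in time using \eqref{linics} and \eqref{adJics}, and verifying the pairwise cancellation of the bilinear terms via the divergence-free conditions, the boundary conditions, and the symmetry of $K$. Your write-up is in fact somewhat more explicit than the paper's on the bookkeeping of the nonlocal cross-terms (the oddness identities for $\nabla K$), which the paper summarizes in a single sentence.
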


\begin{proof}
Note that, thanks to \eqref{adJics}, we have for the sum (that we denote by $\mathcal{I}$)
 of the first four terms on the left-hand side of \eqref{nec.opt.cond} the identity
\begin{align}
&\mathcal{I}:=\beta_1\int_0^T\!\!\int_\Omega(\overline{\uvec}-\uvec_Q)
\cdot\xivec^{\hvec}\,dx\,dt+\beta_2\int_0^T\!\!\int_\Omega(\overline{\varphi}-\varphi_Q)\eta^{\hvec}
\,dx\,dt
+\beta_3\int_\Omega(\overline{\uvec}(T)-\uvec_\Omega)\cdot\xivec^{\hvec}(T)\,dx\nonumber\\
&+\beta_4\int_\Omega(\overline{\varphi}(T)-\varphi_\Omega)\eta^{\hvec}(T)\,dx\,=\,
\beta_1\!\!\int_0^T\int_\Omega(\overline{\uvec}-\uvec_Q)\cdot\xivec^{\hvec}\,dx\,dt\,+\beta_2\int_0^T\!\!\int_\Omega(\overline{\varphi}-\varphi_Q)\eta^{\hvec}\,dx\,dt\nonumber\\
&+\int_0^T\big(\langle\ptil_t(t),\xivec^{\hvec}(t)\rangle_{{{V_{div}}}}\,+
\langle\xivec^{\hvec}_t(t),\ptil(t)\rangle_{{V_{div}}}\big)\,dt
+\int_0^T\big(\langle\qtil_t(t),\eta^{\hvec}(t)\rangle_{{{V}}}
+\langle\eta^{\hvec}_t(t),\qtil(t)\rangle_{{{V}}}\big)
\,dt\,.\label{proofadJ1}
\end{align}
Recalling the weak formulation of the linearized system \eqref{linsy1}--\eqref{linics} for $\hvec=\vvec-
\overline{\vvec}$, we obtain,
omitting the argument $t$,
\begin{align}
\langle\xivec^{\hvec}_t,\ptil\rangle_{{{V_{div}}}}\,&=
\,-2\,\big(\nu(\overline{\varphi})\,D\xivec^{\hvec},
D\ptil\big)\,-\,2\,\big(\nu^{\,\prime}(\overline{\varphi})\,\eta^{\hvec}\,D\overline{\uvec},D\ptil)\,-\,b(\overline{\uvec},
\xivec^{\hvec},\ptil) \nonumber\\[1mm]
&\quad\, -\,b(\xivec^{\hvec},\overline{\uvec},\ptil)
\,+\,\big(\eta^{\hvec}(\nabla K\ast\overline{\varphi}),\ptil\big)\,+\,\big(\overline{\varphi}\,(\nabla K\ast\eta^{\hvec}),\ptil\big)
\,+\,(\vvec-\overline{\vvec},\ptil)\,,
\label{proofadJ2}
\\[3mm]
\langle\eta^{\hvec}_t,\qtil\rangle_{{{V}}}\,&=\,
(\overline{\uvec}\,\eta^{\hvec},\nabla\qtil\,)\,+\,(\xi^{\hvec}\,\overline{\varphi},\nabla\qtil\,)\,
-\,\big(\lambda(\overline{\varphi})\nabla\eta^{\hvec},\nabla\qtil\,\big)\nonumber\\[1mm]
&\quad\,\,+\,\big( m'(\overline{\varphi})\,\eta^{\hvec}\,\nabla(K\ast\overline{\varphi}),\nabla\qtil\,\big)
\,+\,\big(m(\overline{\varphi})(\nabla K\ast\eta^{\hvec}),\nabla\qtil\,\big)\nonumber\\[1mm]
&\quad\,\,-\,\big(\eta^{\hvec}\lambda'(\overline{\varphi})\,\nabla\overline{\varphi},\nabla\qtil\,) .
\label{proofadJ3}
\end{align}
We now insert these two identies, as well as (\ref{wfadj1})  and (\ref{wfadj2}),
in (\ref{proofadJ1}). Integrating by parts, using the boundary conditions for the involved quantities and the fact
that $\xivec^{\hvec}$ and $\ptil$ are divergence free vector fields, and observing that the
symmetry of the kernel $K$ implies the  identity
\begin{align*}
&\int_\Omega ({{K}}\ast\eta)\,\omega\,dx\,=\,\int_\Omega ({{K}}\ast\omega)\,\eta\,dx\,,\quad\forall\,\eta,\omega\in H,
\end{align*}
we \red{arrive at the conclusion that}
$$
\mathcal{I}=\int_0^T\!\!\int_\Omega\ptil\cdot(\vvec-\overline{\vvec})\,dx\,dt\,.
$$
Therefore, (\ref{nec.opt.cond2}) follows from this identity and \eqref{nec.opt.cond}.
\end{proof}

\begin{oss}{\upshape
System \eqref{stt1}--\eqref{stt5} subject to \eqref{sy6}, written for $[\overline{\uvec},\overline{\varphi}]$, the adjoint system \eqref{adJ1}--\eqref{adJics}, and the variational inequality \eqref{nec.opt.cond2}, form together the first-order necessary
optimality conditions. Moreover, since $\mathcal{V}_{ad}$ is a nonempty, closed and convex subset of $L^2(Q)^2$,
the condition \eqref{nec.opt.cond2} is, in the case $\gamma>0$, equivalent to the following condition for the optimal control $\overline{\vvec}\in\mathcal{V}_{ad}$,
\begin{align}\nonumber
&\overline{\vvec}=\mathbb{P}_{\mathcal{V}_{ad}}\Big(-\frac{\ptil}{\gamma}\Big),
\end{align}
where $\mathbb{P}_{\mathcal{V}_{ad}}$ is the orthogonal projector in $L^2(Q)^2$ onto $\mathcal{V}_{ad}$.
From standard arguments it follows from this projection property the pointwise
condition
\begin{align}\nonumber
&
\overline{v}_i(x,t)\,=\,\max\,\left\{v_{a,i}(x,t),\,\min\,\left
\{-\gamma^{-1}\,\widetilde{p}_i(x,t), \,v_{b,i}(x,t)
\right\}\right\}, i=1,2, \quad\mbox{for a.\,e. }\,(x,t)\in Q\,.
\end{align}
}
\end{oss}

\textbf{Acknowledgments}. The first two authors are members of the
Gruppo Nazionale per l'Analisi Matematica, la Probabilit\`{a} e le loro
Applicazioni (GNAMPA) of the Istituto Nazionale di Alta Matematica (INdAM).
The first author is ``titolare di un Assegno di Ricerca dell'Istituto Nazionale di Alta Matematica".

\end{document}